\newcommand{\norm}[1]{\ensuremath{\left\|#1\right\|}}
\newcommand{\abs}[1]{\ensuremath{\left|#1\right|}}
\newcommand{\Om}{\ensuremath{\Omega}}
\newcommand{\K}{\ensuremath{\mathcal{K}}}
\newcommand{\A}{\ensuremath{\mathcal{A}}}
\newcommand{\D}{\ensuremath{\mathcal{D}}}
\newcommand{\eps}{\ensuremath{\varepsilon}}
\newcommand{\R}{\ensuremath{\mathbb{R}}}
\newcommand{\N}{\ensuremath{\mathbb{N}}}
\newcommand{\meas}{\mathrm{meas}}
\newcommand{\Div}{\mathrm{div}\,}
\newcommand{\Grad}{\mathrm{\nabla}}
\newcommand{\uu}{\ensuremath{\bar{u}}}
\newcommand{\pt}{\ensuremath{\partial_t}}
\newcommand{\dx}{\ensuremath{\, dx}}
\newcommand{\dt}{\ensuremath{\, dt}}
\newcommand{\ds}{\ensuremath{\, ds}}
\newcommand{\esssup}{\operatorname*{ess\,sup}}
\newcommand{\essinf}{\operatorname*{ess\,inf}}
\newcommand{\essosc}{\operatorname*{ess\,osc}}
\newcommand{\pdist}{\operatorname*{{\it p}-dist}}
\newcommand{\EE}{\mathcal{E}}
\newcommand{\TT}{\mathcal{T}}
\newcounter{dctr}[section]
\numberwithin{equation}{section}
\newtheorem{corollary}{Corollary}[section]
\newtheorem{theorem}{Theorem}[section]
\newtheorem{definition}{Definition}[section]
\newtheorem{lemma}{Lemma}[section]
\newtheorem{proposition}{Proposition}[section]
\newtheorem{rem}{Remark}[section]
\title[Doubly nonlinear chemotaxis model]
{On a doubly nonlinear diffusion model of chemotaxis\\ with
prevention of overcrowding}
\author[Bendahmane]{Mostafa Bendahmane$^{\mathrm{a}}$}
\author[B\"urger]{Raimund B\"urger$^{\mathrm{a}}$}
\author[Ruiz]{Ricardo Ruiz Baier$^{\mathrm{a}}$}
\author[Urbano]{Jos\'e Miguel Urbano$^{\mathrm{b}}$}
\thanks{$^{\mathrm{a}}$Departamento de Ingenier\'{\i}a Matem\'{a}tica,
 Universidad de Concepci\'{o}n, Casilla 160-C,  Concepci\'{o}n,
 Chile. \newline
E-mail: {\tt mostafab@ing-mat.udec.cl}, {\tt rburger@ing-mat.udec.cl},
{\tt rruiz@ing-mat.udec.cl}}
\thanks{$^{\mathrm{b}}$  CMUC, Department of Mathematics, University of
Coimbra, 3001-454 Coimbra, Portugal.
E-mail: {\tt jmurb@mat.uc.pt}}
\date{\today}
\begin{document}
\begin{abstract}
This paper addresses the existence and regularity of weak solutions
for a fully parabolic model of chemotaxis, with prevention of
overcrowding, that degenerates in a two-sided fashion, including an
extra nonlinearity represented by a $p$-Laplacian diffusion term. To
prove the existence of weak solutions, a Schauder fixed-point
argument is applied to a regularized problem and the compactness
method is used to pass to the limit. The local H\"older regularity
of weak solutions is established using the method of intrinsic
scaling. The results are a contribution to showing, qualitatively,
to what extent the properties of the classical Keller-Segel
chemotaxis models are preserved in a more general setting. Some
numerical examples illustrate the model.
\end{abstract}
\subjclass{AMS Subject Classification: 35K65, 92C17, 35B65}
\keywords{Chemotaxis, reaction-diffusion equations, degenerate PDE,
parabolic $p$-Laplacian, doubly nonlinear, intrinsic scaling}
\maketitle
\section{Introduction}
\subsection{Scope}
It is the purpose of this paper to study the  existence and regularity
 of weak solutions of the following parabolic system,
 which is a generalization of the  well-known
 Keller-Segel   model \cite{Horstmann:03,Keller-Segel:71,murray03}
 of chemotaxis:
\begin{subequations}
\label{bbru_S1-S2-S3}
\begin{align}
&\pt u-
\Div\bigl(|\Grad A(u)|^{p-2}\Grad A(u)\bigr)
+\Div\bigl(\chi uf(u)\Grad v\bigr )=0 \quad \text{in $Q_{T}:=\Om
  \times (0,T)$}, \quad
  T>0, \quad \Om\subset\mathbb{R}^N,   \label{bbru_eq1.1a}  \\
& \pt v - d \Delta v =g(u,v) \quad\text{in
  $Q_{T}$},   \label{bbru_eq1.1b} \\
& |\Grad A(u)|^{p-2}a(u)\frac{\partial u}{\partial \eta}=0,\quad
\frac{\partial v}{\partial \eta}=0 \quad \text{on $\Sigma_T:=\partial \Om \times (0,T)$} , \\
& u(x,0)=u_0(x),\quad v(x,0)=v_0(x) \quad \text{on $\Omega$,}
\end{align}
  \end{subequations}
where $\Om\subset\mathbb{R}^N$ is a bounded domain  with a
sufficiently smooth boundary $\partial \Om$ and outer unit
normal~$\eta$.
 Equation~\eqref{bbru_eq1.1a} is {\em doubly nonlinear}, since we
 apply the $p$-Laplacian  diffusion operator, where we assume
 $2\le p<\infty$, to the integrated diffusion function
 $A(u) := \int_0^u a(s) \, ds$, where $a(\cdot)$ is a non-negative
 integrable function with support on the interval~$[0,1]$.

In the biological
phenomenon described by   \eqref{bbru_S1-S2-S3}, the
 quantity $u=u(x,t)$ is the density of
 organisms, such as  bacteria or
cells. The  conservation PDE \eqref{bbru_eq1.1a}
incorporates two competing mechanisms, namely the density-dependent
  diffusive motion  of the cells, described by the doubly
 nonlinear diffusion term, and a
 motion  in response to and towards the  gradient $\nabla v$ of the
 concentration $v=v(x,t)$ of
a substance called {\em chemoattractant}.
  The movement in response to $\nabla v$  also involves the
 density-dependent probability  $f(u(x,t))$
 for
 a cell located at $(x,t)$ to find space in a  neighboring location,
 and a constant $\chi$  describing   chemotactic sensitivity.
 On the other hand, the PDE  \eqref{bbru_eq1.1b}  describes the
 diffusion of the  chemoattractant, where  $d>0$ is
 a diffusion constant
and the function $g(u,v)$ describes the rates of production and
degradation of the chemoattractant; we here adopt 
 the  common choice
\begin{align} \label{bbru_gchoice}
g(u,v)=\alpha u - \beta v, \quad \alpha,\beta\ge 0.
\end{align}

We assume that there exists a maximal population density of
cells~$u_{\mathrm{m}}$ such that $f(u_{\mathrm{m}})=0$. This
corresponds to a switch to repulsion at high densities, known as
prevention of overcrowding, volume-filling effect or density control
(see \cite{bku}). It means that cells stop to accumulate at a given
point of $\Om$ after their density attains a certain threshold
value,  and the chemotactic cross-diffusion  term  $\chi u f(u)$
vanishes identically when $u \ge u_{\mathrm{m}}$. We also assume
that the diffusion coefficient $a(u)$ vanishes at $0$ and
$u_{\mathrm{m}}$, so that \eqref{bbru_eq1.1a} degenerates for $u=0$
and $u=u_{\mathrm{m}}$, while $a(u)>0$ for $0<u<u_{\mathrm{m}}$. A
typical example is  $a(u)=\epsilon u(1-u_{\mathrm{m}})$,
 $\epsilon >0$.   Normalizing variables by
$\tilde{u}=u/u_{\mathrm{m}}$, $\tilde{v}=v$ and
$\tilde{f}(\tilde{u})=f(\tilde{u}u_{\mathrm{m}})$, we have
$\tilde{u}_{\mathrm{m}}=1$; in the sequel we will omit tildes in the
notation.

The main intention  of the present work is  to  address the
question of the regularity of weak solutions, which is a delicate
analytical issue since  the structure of equation  \eqref{bbru_eq1.1a}
 combines a
degeneracy of $p$-Laplacian type with a two-sided point degeneracy
in the diffusive term. We prove the local H\"older continuity of the
weak solutions of \eqref{bbru_S1-S2-S3} using the method of
intrinsic scaling (see \cite{DiBe,Urb-book}). The novelty lies in
tackling the two types of degeneracy simultaneously and finding the
right geometric setting for the concrete structure of the PDE.
 The resulting  analysis
 combines the technique used by Urbano \cite{Urb} to
study the case of a diffusion coefficient $a(u)$
 that decays like a power at both
degeneracy points (with $p=2$) with    the technique by Porzio and
Vespri \cite{Porz-Vesp} to study  the $p$-Laplacian, with
$a(u)$ degenerating at only one side. We recover both results as
particular cases of the one studied here. To our knowledge, the
$p$-Laplacian is a new ingredient in chemotaxis models,
 so we also include
a few numerical examples that illustrate the behavior of solutions of
\eqref{bbru_S1-S2-S3} for $p>2$, compared with solutions to the
standard case $p=2$, but including nonlinear diffusion.

\subsection{Related work}
To put this paper in the proper perspective,
 we recall  that the Keller-Segel model is a widely
studied topic, see  e.g. Murray \cite{murray03} for a general
 background and Horstmann
 \cite{Horstmann:03}  for a fairly complete
survey on the Keller-Segel model and  the variants that have been
 proposed. Nonlinear diffusion
 equations for biological populations that degenerate at least for
 $u=0$   were proposed in the 1970s by   Gurney and Nisbet
 \cite{gurney} and  Gurtin and McCamy \cite{gurtin}; more recent
 works include those
by Witelski \cite{witelski}, Dkhil \cite{dkhil}, Burger et al.\
\cite{Burger-al} and Bendahmane et al.\ \cite{bku}.  Furthermore,
 well-posedness  results
 for  these kinds of models include, for example, the existence of radial
solutions exhibiting chemotactic collapse \cite{Her-Vel},
the local-in-time existence, uniqueness and positivity of classical solutions,
and  results on their blow-up behavior \cite{Yagi},
 and  existence and uniqueness using the abstract theory developed in
 \cite{Amann}, see \cite{Laurencot-Wrzosek}. Burger et al.\  \cite{Burger-al}
  prove  the global existence
and uniqueness of the Cauchy problem in $\R^N$ for
linear and nonlinear diffusion
with prevention of overcrowding.  The model proposed  herein exhibits
an even higher degree of nonlinearity, and offers further
possibilities to describe chemotactic movement; for example,
one could imagine that the cells or bacteria are actually placed
 in a medium with a non-Newtonian rheology.  In fact, the
 evolution $p$-Laplacian equation
 $u_t = \Div (| \nabla u |^{p-2} \nabla u)$, $p >1$, is also called
 non-Newtonian filtration equation, see \cite{drabek} and
 \cite[Chapter~2]{FourChineseBook} for surveys.
 Coming back to the  Keller-Segel model,
 we also  mention 
 that another effort to endow
 this model with a more general diffusion mechanism
 has recently been made by Biler and Wu \cite{bilerwu},
 who consider fractional diffusion.

Various results on  the H\"older regularity of weak solutions to
quasilinear parabolic systems are based  on the work of  DiBenedetto
\cite{DiBe}; the present  article also contributes to this
direction. Specifically for a chemotaxis model, Bendahmane, Karlsen,
and Urbano   \cite{bku} proved the existence and H\"older regularity
of weak solutions for a version of  \eqref{bbru_S1-S2-S3} for $p=2$.
For a detailed description of the intrinsic scaling method and some
applications we refer to the books \cite{DiBe,Urb-book}.

Concerning uniqueness of solution, the presence of a nonlinear
degenerate diffusion term and a nonlinear transport term represents
a disadvantage and we could not obtain the uniqueness of a weak
solution. This contrasts with the results by Burger et al.\  
\cite{Burger-al}, where the authors   prove
uniqueness of solutions for a degenerate parabolic-elliptic system
set in an unbounded domain, using a method which relies on a
continuous dependence estimate from  \cite{KR:Rough_Unique}, that
does not apply to our problem because it is difficult to bound
$\Delta v$ in $L^\infty(Q_T)$ due to the parabolic nature of 
\eqref{bbru_eq1.1b}.
\subsection{Weak solutions and statement of main results}
Before stating our main results, we give  the definition of a weak
solution to \eqref{bbru_S1-S2-S3},  and recall the notion of certain
functional spaces.
 We denote by  $p'$ the conjugate exponent of $p$ (we will restrict ourselves
to the degenerate case $p\ge 2$):
$\frac{1}{p}+\frac{1}{p'}=1$. Moreover,
 $C_w(0,T,L^2(\Om))$ denotes the space of continuous
functions with values in (a closed ball of) $L^2(\Om)$ endowed
with the weak topology, and $\left \langle \cdot,\cdot \right
\rangle$  is  the  duality pairing between $W^{1,p}(\Om)$
and its dual  $(W^{1,p}(\Om))'$.

\begin{definition}
\label{bbru_def1}
A weak solution of \eqref{bbru_S1-S2-S3} is a pair $(u,v)$ of
functions satisfying the following conditions: {\em
\begin{align*}
    & \text{$0\le u(x,t)\le 1$ and $v(x,t)\ge 0$ for a.e.~$(x,t)\in Q_T$},\\
    & u\in C_w\bigl(0,T,L^2(\Om)\bigr), \quad \pt u \in L^{p'}
 \bigl(0,T;(W^{1,p}(\Om))'\bigr),
\quad u(0)=u_{0},\\
    & A(u)=\int_0^u a(s)\ds \in L^p\bigl(0,T;W^{1,p}(\Om)\bigr),\\
    & v\in L^\infty(Q_T)\cap L^r\bigl(0,T;W^{1,r}(\Om)\bigr)\cap
    C\bigl(0,T,
L^r(\Om)\bigr) \quad
\text{for all $r>1$},\\
    & \pt v \in L^{2}\bigl(0,T;(H^1(\Om))'\bigr),\quad v(0)=v_{0},
\end{align*}
and, for all $\varphi \in L^p(0,T;W^{1,p}(\Om))$ and $\psi  \in L^2(0,T;H^1(\Om))$,
\begin{align*}
    & \int_0^T \left \langle \pt u, \varphi \right
    \rangle \dt +\iint_{Q_T} \Bigl\{ |\Grad A(u)|^{p-2}\Grad A(u)
  - \chi u f(u)\Grad v \Bigr\} \cdot \Grad \varphi\dx\dt=0,\\
    & \int_0^T \left \langle \pt v , \psi \right \rangle \dt
    +d\iint_{Q_T}\Grad v \cdot \Grad \psi\dx\dt
    =\iint_{Q_T}g(u,v)\psi\dx\dt.
\end{align*} }
\end{definition}

To ensure, in particular, that all terms and coefficients  are
sufficiently smooth for this  definition to make sense, we require
that $f\in C^1[0,1]$ and $f(1)=0$, and assume that  the diffusion
coefficient
 $a( \cdot)$ has the following properties:
$a\in C^1[0,1]$, $a(0)=a(1)=0$, and $a(s)>0$  for
  $0<s<1$.
 Moreover,  we assume that there exist constants
 $\delta\in (0,1/2)$ and $\gamma_2\geq\gamma_1>1$ such that
\begin{align}\label{bbru_cond-a(u)}
\gamma_1\phi(s)\leq a(s)\leq \gamma_2\phi(s) \quad
 \text{for $s\in[0,\delta]$},\quad
\gamma_1\psi(1-s)\leq a(s)\leq \gamma_2\psi(1-s) \quad
\text{for $s\in[1-\delta,1]$,}
\end{align}
where we define the functions
$\smash{\phi(s):=s^{\beta_1/(p-1)}}$ and
 $\smash{\psi(s):=s^{\beta_2/(p-1)}}$ for
$\beta_2>\beta_1>0$.

Our first main result
is the following existence theorem for weak solutions.
\begin{theorem} \label{bbru_theo-weak} If
$u_{0},v_{0}\in L^\infty(\Om)$ with $0\le u_0\le 1$ and $v_0\ge 0$
a.e. in $\Om$, then there exists a weak solution to the degenerate
system \eqref{bbru_S1-S2-S3} in the sense of Definition \ref{bbru_def1}.
\end{theorem}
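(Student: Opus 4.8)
The plan is to prove Theorem~\ref{bbru_theo-weak} by a double approximation: first regularize the degeneracies in \eqref{bbru_S1-S2-S3} to obtain a non-degenerate, strictly parabolic system; then solve the regularized system by a Schauder fixed-point argument; and finally recover a weak solution of the original problem by a compactness argument as the regularization parameter vanishes. Concretely, for $\eps>0$ I would replace $a(u)$ by $a_\eps(u):=a(u)+\eps$ (so that $A_\eps(u):=\int_0^u a_\eps(s)\ds$ is strictly increasing and the principal part $\Div(|\Grad A_\eps(u)|^{p-2}\Grad A_\eps(u))$ becomes a genuine parabolic $p$-Laplacian), extend $f$ and the nonlinearities smoothly and in a bounded way outside $[0,1]$, and possibly mollify the initial data. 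The natural functional setting is $u\in L^p(0,T;W^{1,p}(\Om))$ with $\pt u\in L^{p'}(0,T;(W^{1,p}(\Om))')$ for the first equation and the usual $H^1$-parabolic setting for $v$.

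For the fixed-point step I would set up the map $\TT$ on a suitable closed convex subset of, say, $L^2(Q_T)$ (or $L^p(Q_T)$): given $\hat u$, first solve the linear-in-$v$ parabolic equation $\pt v-d\Delta v=g(\hat u,v)$ with Neumann data to get $v=v(\hat u)$, using standard linear parabolic theory; the choice $g(u,v)=\alpha u-\beta v$ and the maximum principle give $v\ge 0$ and an $L^\infty(Q_T)$ bound depending only on the data, together with $L^r(0,T;W^{1,r}(\Om))$ bounds for all $r>1$ by parabolic regularity. Then, with this $v$ frozen, solve the quasilinear equation $\pt u-\Div(|\Grad A_\eps(u)|^{p-2}\Grad A_\eps(u))+\Div(\chi\, \hat u f(\hat u)\Grad v)=0$ for $u$ — here the drift term is a known $L^p$ vector field, so existence follows from monotone operator theory (the operator $u\mapsto-\Div(|\Grad A_\eps(u)|^{p-2}\Grad A_\eps(u))$ is monotone, coercive and hemicontinuous on $W^{1,p}(\Om)$ once $A_\eps$ is bi-Lipschitz) and a Galerkin/Lions–Aubin scheme. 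Set $\TT(\hat u):=u$. Continuity of $\TT$ follows from the a priori estimates plus uniqueness for the linear $v$-problem and monotonicity for the $u$-problem; compactness of $\TT$ comes from the Aubin–Lions lemma, using the uniform bounds on $\Grad A_\eps(u)$ in $L^p(Q_T)$ and on $\pt u$ in $L^{p'}(0,T;(W^{1,p}(\Om))')$. Schauder's theorem then yields a solution $(u_\eps,v_\eps)$ of the regularized system.

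The third step is the passage to the limit $\eps\to0$. The essential a priori estimates are uniform in $\eps$: testing the $u$-equation with $u_\eps$ (or with $A_\eps(u_\eps)$) gives $\|\Grad A_\eps(u_\eps)\|_{L^p(Q_T)}+\|u_\eps\|_{L^\infty(0,T;L^2(\Om))}\le C$, absorbing the drift term via Young's inequality and the $L^\infty$-bound on $v_\eps$ and on $uf(u)$; testing the $v$-equation gives the parabolic bounds listed in Definition~\ref{bbru_def1}. One must also establish $0\le u_\eps\le 1$ a.e., which follows from a Stampacchia truncation argument exploiting $a(0)=a(1)=0$ and $f(1)=0$ (so the drift term does not spoil the upper barrier $u\equiv1$); this is what makes the regularization $a+\eps$ consistent with the invariant region. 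With these bounds, Aubin–Lions gives strong $L^p$ convergence of $u_\eps$ and of $v_\eps$ (and a.e. convergence along a subsequence), weak convergence of $\Grad A_\eps(u_\eps)$, and then one identifies the nonlinear limits: $A_\eps(u_\eps)\to A(u)$ strongly, $\chi u_\eps f(u_\eps)\Grad v_\eps\rightharpoonup\chi u f(u)\Grad v$, and the $p$-Laplacian term is handled by the standard Minty–Browder monotonicity trick applied to $B_\eps(w):=-\Div(|\Grad w|^{p-2}\Grad w)$ together with the strong convergence $A_\eps(u_\eps)\to A(u)$ in $L^p(0,T;L^p(\Om))$. Recovering the initial conditions and the time-continuity $u\in C_w(0,T;L^2(\Om))$, $v\in C(0,T;L^r(\Om))$ is routine once $\pt u\in L^{p'}(0,T;(W^{1,p}(\Om))')$ and $\pt v\in L^2(0,T;(H^1(\Om))')$ are under control.

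The main obstacle I anticipate is the identification of the degenerate $p$-Laplacian flux in the limit: since $A_\eps(u_\eps)$ converges only weakly in $W^{1,p}$, one cannot pass to the limit directly in $|\Grad A_\eps(u_\eps)|^{p-2}\Grad A_\eps(u_\eps)$, and the Minty argument requires care because the test function must lie in the right space and because the $\eps$-perturbation sits inside $A_\eps$ rather than outside the operator — one needs the uniform strong convergence $A_\eps(u_\eps)\to A(u)$ and a uniform modulus to close the monotonicity inequality. A secondary delicate point is ensuring the drift term $\chi\hat u f(\hat u)\Grad v$ does not destroy the a priori $L^\infty(0,T;L^2)$ and $L^p(0,T;W^{1,p})$ estimates uniformly in $\eps$; this is where the $L^\infty$ control of $v_\eps$ from the maximum principle and the boundedness of $sf(s)$ on $[0,1]$ are used crucially, via Young's inequality with the $p$-Laplacian term.
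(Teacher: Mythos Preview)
Your proposal is correct and follows essentially the same architecture as the paper: regularize $a\mapsto a_\eps=a+\eps$, solve the non-degenerate system via a Schauder fixed-point argument, derive $\eps$-uniform estimates, and pass to the limit using compactness and a Minty-type monotonicity argument to identify the $p$-Laplacian flux.

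Two points where the paper differs in implementation are worth noting. First, in the fixed-point map the paper freezes only the $v$-equation in terms of $\bar u$, but keeps the drift $\chi u_\eps f(u_\eps)\Grad v_\eps$ fully nonlinear in the unknown $u_\eps$ when solving the $u$-equation; this makes the sub-problem harder (they invoke quasilinear parabolic theory) but gives the invariant region $0\le u_\eps\le 1$ directly at each iterate, since the drift vanishes at $u=0$ and $u=1$. Your choice to freeze $\chi\hat u f(\hat u)\Grad v$ makes the $u$-equation a clean monotone problem, but then the Stampacchia truncation for $0\le u\le 1$ does not apply to the iterates (only to the eventual fixed point), so you must instead map into a ball defined by energy bounds. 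Second, for the Minty step---which you correctly flag as the main obstacle---the paper does not run the argument on $Q_T$ directly but rather on the triple domain $\mathcal{Q}_T=\{(t,s,x):(x,s)\in Q_t,\ t\in[0,T]\}$, coupling this with a ``weak chain rule'' lemma for $\langle\pt u,A(u)\rangle$ to control the time-derivative term that arises when one tests with $A(u)-A_\eps(u_\eps)$; this device is what makes the cross term $I_1$ vanish in the limit. Your sketch anticipates the issue but stops short of this mechanism.
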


In Section~\ref{bbru_Sect:existence}, we
 first  prove the existence of solutions to a regularized
 version of  \eqref{bbru_S1-S2-S3} by
applying  the Schauder
fixed-point theorem. The regularization basically consists
 in replacing the degenerate diffusion coefficient $a(u)$  by
 the regularized, strictly positive diffusion coefficient
 $a_{\varepsilon} (u)  := a(u) + \varepsilon$,
 where $\varepsilon >0$ is the regularization parameter.
Once the regularized
 problem is solved, we send the regularization parameter~$\varepsilon$
to zero to produce a weak solution of the original system
\eqref{bbru_S1-S2-S3} as the limit of a sequence of such
approximate solutions. Convergence is proved by means of
\textit{a priori} estimates and compactness arguments.

We denote by $\pt Q_T$ the parabolic boundary of $Q_T$,
define $\smash{\tilde{M}:=\|u\|_{\infty,Q_T}}$, and recall the definition of
the intrinsic parabolic $p$-distance from a compact set $K\subset Q_T$ to $\pt Q_T$ as
$$\pdist(K;\pt Q_T):=\inf_{(x,t)\in K,\ (y,s)\in\pt Q_T}\bigl(|x-y|+
\tilde{M}^{(p-2)/p}|t-s|^{1/p}\bigr).$$
Our second main result is the interior local H\"older regularity of weak solutions.
\begin{theorem}\label{bbru_thm-holder}
Let $u$ be a bounded local weak solution of \eqref{bbru_S1-S2-S3} in the
sense of
 Definition~\ref{bbru_def1}, and $\tilde{M}=\|u\|_{\infty,Q_T}$.
 Then $u$ is locally H\"older continuous in $Q_T$, i.e., there exist constants
 $\gamma>1$ and  $\alpha\in(0,1)$, depending only
on the data, such that, for every compact $K\subset Q_T$,
\begin{align*}
\bigl|u(x_1,t_1)-u(x_2,t_2)\bigr|\leq\gamma
\tilde{M}\biggl\{\frac{|x_1-x_2|+ \tilde{M}^{(p-2)/p}
|t_2-t_1|^{1/p}}{ \pdist (K;\pt Q_T)}\biggr\}^\alpha,  \qquad
\forall (x_1,t_1), (x_2,t_2) \in K.
\end{align*}
\end{theorem}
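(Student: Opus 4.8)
The plan is to prove Theorem~\ref{bbru_thm-holder} by the method of intrinsic scaling, adapting the cylinder geometry to the \emph{three} simultaneous degeneracies present in \eqref{bbru_eq1.1a}: the $p$-Laplacian structure and the two-sided point degeneracy of $a(u)$ at $u=0$ and $u=1$. The crucial first step is to fix an interior point $(x_0,t_0)\in Q_T$ and examine the essential oscillation of $u$ over a small cylinder. Depending on whether the essential infimum of $u$ on that cylinder is bounded away from $0$, the essential supremum is bounded away from $1$, or both are near the extreme values, the equation behaves differently, so I would split into cases. In the ``interior'' regime, where $u$ stays in $[\delta,1-\delta]$, the coefficient $a(u)$ is comparable to a positive constant and \eqref{bbru_eq1.1a} reduces to a genuine parabolic $p$-Laplacian equation with a bounded lower-order (chemotactic) perturbation $\Div(\chi uf(u)\nabla v)$; here the classical DiBenedetto theory applies, using intrinsic cylinders of the form $Q(\theta\rho^p,\rho)$ with $\theta\sim(\essosc u)^{2-p}$, provided one first controls $\nabla v$ in a suitable $L^q$ space via the second equation \eqref{bbru_eq1.1b}.

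Next I would treat the degenerate regime near $u=0$ (the regime near $u=1$ being symmetric under $u\mapsto 1-u$, using the second bound in \eqref{bbru_cond-a(u)}). Here one follows Urbano \cite{Urb} and Porzio--Vespri \cite{Porz-Vesp}: the correct intrinsic geometry is dictated by the growth $\phi(s)=s^{\beta_1/(p-1)}$, so one builds cylinders whose time-scaling incorporates a factor like $(\essosc u)^{-\beta_1}$ on top of the $p$-Laplacian factor $(\essosc u)^{2-p}$, i.e.\ one rescales $u$ by its oscillation $\omega$ and uses the change of variables that turns $A(u)$ into something behaving like $u^{1+\beta_1/(p-1)}$. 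The two basic alternatives of intrinsic scaling are then run: either $u$ is mostly close to its supremum on the rescaled cylinder, in which case a De Giorgi--type iteration on the level sets $\{u>k\}$ (using energy estimates obtained by testing with truncations $(u-k)_+$ suitably cut off, and exploiting that $|\nabla A(u)|^{p-2}\nabla A(u)$ controls $|\nabla(u-k)_+|^{p}$ weighted by $a(u)\gtrsim k^{\beta_1/(p-1)\cdot(p-1)}$) yields a pointwise reduction of the supremum; or the measure of $\{u>\omega/2\}$ is small on many time slices, and a logarithmic-estimate / expansion-of-positivity argument propagates this smallness forward in time, again reducing the oscillation by a fixed fraction.

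The key outcome of both cases is a De Giorgi--type iteration inequality yielding, for some fixed $\eta\in(0,1)$ independent of the cylinder, the oscillation decay $\essosc_{Q'} u\le \eta\,\essosc_{Q} u$ where $Q'$ is a suitably shrunk intrinsic subcylinder of $Q$. One must check carefully that the constant $\eta$ does \emph{not} deteriorate as the scale shrinks — this is where the matching between the exponents $\beta_1,\beta_2,p$ and the intrinsic scaling of time is essential, and is exactly the point where \cite{Urb} and \cite{Porz-Vesp} must be merged. Iterating this decay over a nested sequence of intrinsic cylinders, and then converting the intrinsic (oscillation-dependent) radii back to the standard parabolic $p$-distance using $\tilde M=\|u\|_{\infty,Q_T}$ in the time-scaling (which is precisely why $\tilde M^{(p-2)/p}|t_2-t_1|^{1/p}$ appears in the statement), produces the stated logarithmic-type modulus of continuity, which is equivalent to local Hölder continuity with some exponent $\alpha\in(0,1)$ and constant $\gamma$ depending only on $N,p,\beta_1,\beta_2,\chi,\|f\|_{C^1},\|v\|$ and the structural constants $\gamma_1,\gamma_2,\delta$. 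A standard covering argument upgrades the pointwise estimate at $(x_0,t_0)$ to the uniform estimate over any compact $K$.

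\medskip\noindent\emph{Main obstacle.} The hardest part is handling the regime where $u$ is simultaneously close to \emph{both} degeneracy values within the same cylinder — i.e.\ the oscillation is of order one near, say, the left degeneracy while the diffusion also feels the right one — and, more generally, ensuring that the two different intrinsic time-scalings (the one natural for the $p$-Laplacian and the one natural for the point-degeneracy of $a$) are compatible so that the energy and logarithmic estimates close with a scale-independent constant. Getting the geometry right — choosing cylinders of the form $(x_0,t_0)+Q\bigl(c\,\omega^{2-p-\beta_i}\rho^{p},\rho\bigr)$ with the correct exponent $\beta_i\in\{\beta_1,\beta_2\}$ selected according to which degeneracy is active, and verifying the crucial inclusion/nesting of these cylinders at every step of the iteration — is the technical heart of the proof and the place where the novelty over \cite{Urb} and \cite{Porz-Vesp} lies. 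Controlling the chemotactic drift term $\chi uf(u)\nabla v$ uniformly, using only the $L^r$-regularity of $\nabla v$ guaranteed by Definition~\ref{bbru_def1} and the boundedness of $uf(u)$, is a comparatively minor but necessary additional ingredient that must be threaded through all the energy estimates.
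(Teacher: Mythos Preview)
Your broad framework---intrinsic scaling, energy and logarithmic estimates, an alternative argument, oscillation decay, and iteration---is correct and matches the paper. However, your organizing principle of first \emph{splitting into cases} according to whether $u$ lies in $[\delta,1-\delta]$, is close to $0$, is close to $1$, or straddles both, is not how the paper proceeds, and it is precisely this case-splitting that leaves your ``main obstacle'' unresolved.

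The paper never separates an ``interior regime'' from ``degenerate regimes'', nor does it appeal to any symmetry $u\mapsto 1-u$. Instead it builds a \emph{single} nested geometry that encodes both degeneracies at once: an outer cylinder $Q(a_0R^p,R)$ with
\[
a_0=\Bigl(\frac{\omega}{2}\Bigr)^{2-p}\phi\Bigl(\frac{\omega}{2^m}\Bigr)^{1-p}
\]
(scaled by the degeneracy at $0$), and inside it a one-parameter family of subcylinders $Q_R^{t^*}=(0,t^*)+Q(dR^p,R)$ with
\[
d=\Bigl(\frac{\omega}{2}\Bigr)^{2-p}\psi\Bigl(\frac{\omega}{4}\Bigr)^{1-p}
\]
(scaled by the degeneracy at $1$). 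The condition $\phi(\omega/2^m)\le\psi(\omega/4)$, arranged by choosing $m$, guarantees the nesting. The \emph{alternative itself}---not a priori knowledge of which extreme $u$ is near---then selects which degeneracy governs the step: if some subcylinder has small measure of $\{u>1-\omega/2\}$, the De Giorgi iteration runs with the $\psi$-bound on $a(u)$; otherwise every subcylinder has small measure of $\{u<\omega/2\}$, and the complementary argument runs with the $\phi$-bound. Either way the oscillation drops by a fixed factor $\sigma<1$ independent of $\omega$ and $R$, and one iterates.

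So the answer to your stated obstacle---how to make the two intrinsic time-scalings compatible when both degeneracies are felt---is not to choose $\beta_i\in\{\beta_1,\beta_2\}$ according to which degeneracy is ``active'', but to carry \emph{both} scalings simultaneously in a two-level cylinder structure and let the measure-theoretic alternative decide. Your proposal identifies the difficulty accurately but does not supply this mechanism, which is the technical heart of the argument. The handling of the drift term $\chi uf(u)\nabla v$ via H\"older's inequality and the $L^{p'p}$ bound on $\nabla v$, which you flag as a minor ingredient, is indeed threaded through the energy and logarithmic estimates essentially as you describe.
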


In Section~\ref{bbru_Sect:holdercont}, we prove
Theorem~\ref{bbru_thm-holder} using the method of intrinsic scaling.
This technique is based on analyzing the underlying PDE in a
geometry dictated by its own degenerate structure, that amounts,
roughly speaking, to accommodate its degeneracies. This is achieved
by rescaling the standard parabolic cylinders by a factor that
depends on the particular form of the degeneracies and on the
oscillation of the solution, and which allows for a recovery of
homogeneity. The crucial point is  the proper choice of the
intrinsic geometry which, in the case studied here, needs to take
into account the $p$-Laplacian structure of the diffusion term, as
well as the fact that the diffusion coefficient $a(u)$ vanishes at
$u=0$ and $u=1$. At the core  of the  proof is  the study of an
alternative, now a standard type of argument \cite{DiBe}.
 In either case the conclusion is that when going from
a rescaled cylinder into a smaller one, the oscillation of the solution decreases
in a way that  can be quantified.

In the statement of Theorem~\ref{bbru_thm-holder} and its proof,
 we focus on the interior regularity of~$u$; that of~$v$ follows from
 classical theory of parabolic PDEs \cite{Ladyzenskaia:1968}.
 Moreover, standard adaptations of the method are sufficient
 to extend the results to the parabolic boundary, see \cite{DiBe,DiB:current}.

\subsection{Outline}
The remainder  of the paper is organized as follows: Section
\ref{bbru_Sect:existence} deals with the general proof of our first
main result (Theorem \ref{bbru_theo-weak}).
Section~\ref{bbru_Sect:nondegenerate} is devoted to the detailed
proof of existence of solutions to a non-degenerate problem; in
Section~\ref{bbru_Sect:fixed} we state and prove a fixed-point-type
lemma, and the conclusion of the proof of Theorem
\ref{bbru_theo-weak} is contained in Section
\ref{bbru_Sect:proof-weak}. In Section~\ref{bbru_Sect:holdercont} we
use the method of intrinsic scaling to prove Theorem
\ref{bbru_thm-holder}, establishing the H\"older continuity of weak
solutions to \eqref{bbru_S1-S2-S3}. Finally, in 
 Section~\ref{bbru_Sec:example} we present two  numerical examples  showing the
effects  of prevention of overcrowding and of 
 including the $p$-Laplacian term, and in the Appendix we
give further details about the numerical method used to treat the
examples.

\section{Existence of solutions} \label{bbru_Sect:existence}

We first prove the existence of solutions to a non-degenerate,
regularized version of
  problem~\eqref{bbru_S1-S2-S3},  using the Schauder
fixed-point theorem, and our approach closely follows that of
 \cite{bku}. We define the  following
closed subset of the Banach space $L^p(Q_T)$:
\begin{align*}
\K:=\bigl\{u\in L^p(Q_T) \, : \,  0\le u(x,t)\le 1 \; \text{for a.e.
 $(x,t)\in Q_T$}  \bigr\}.
\end{align*}

\subsection{Weak solution to a non-degenerate problem}\label{bbru_Sect:nondegenerate}
We define  the new diffusion term  $A_\eps(s):=A(s)+\eps s$, with
$a_\eps(s)=a(s)+\eps$,  and consider, for each fixed $\eps>0$, the
non-degenerate  problem
\begin{subequations}
    \label{bbru_S-reg}
    \begin{align}
        & \pt u_\eps -\Div\bigl(|\Grad A_\eps(u_\eps)|^{p-2}\Grad A_\eps(u_\eps)\bigr)
+\Div\bigl(\chi f(u_\eps)\Grad v_\eps\bigr)=0
\quad \text{in $Q_{T}$},  \label{bbru_eq2.1a}  \\
        & \pt v_\eps - d\Delta v_\eps
=g(u_\eps,v_\eps)  \quad \text{in $Q_{T}$}, \label{bbru_eq2.1b} \\
        & |\Grad A_\eps(u_\eps)|^{p-2}a_\eps(u_\eps)
\frac{\partial u_\eps}{\partial \eta}=0, \label{bbru_eq2.1c} \quad
        \frac{\partial v_\eps}{\partial \eta}=0
\quad \text{on $\Sigma_T$},\\
        &u_\eps(x,0)=u_0(x),\quad v_\eps(x,0)=v_0(x)
\quad \text{for $x \in \Om$}.\label{bbru_eq2.1d}
    \end{align}
\end{subequations}
With $\uu \in \K$ fixed, let $v_{\varepsilon}$ be the unique
solution of the
 problem
\begin{subequations}
\label{bbru_S1-v}
  \begin{align}
&  \pt v_{\varepsilon}  - d \Delta v_{\varepsilon}=g(\uu,v_{\varepsilon})
\quad \text{in $Q_T$,} \label{bbru_eq2.2a}  \\
& \frac{\partial v_{\varepsilon}}{\partial \eta}=0 \quad \text{on
 $\Sigma_T$},\quad v_{\varepsilon}(x,0)=v_0(x) \quad \text{for $x \in \Omega$}.
\label{bbru_eq2.2b}
  \end{align}
\end{subequations}
Given the function $v_{\varepsilon}$, let $u_{\varepsilon}$ be the
unique solution of the following quasilinear parabolic problem:
\begin{subequations}\label{bbru_S1-u}
  \begin{align}
 & \pt u_{\varepsilon}-
\Div \bigl(|\Grad A_\eps(u_{\varepsilon})|^{p-2}
\Grad A_\eps(u_{\varepsilon})\bigr)+
\Div \bigl(\chi u_{\varepsilon}f(u_{\varepsilon})
\Grad v_{\varepsilon}\bigr)=0 \quad  \text{in
  $Q_T$,} \label{bbru_eq2.3a} \\
&  |\Grad A_\eps(u_{\varepsilon})|^{p-2}a_\eps(u_{\varepsilon})
\frac{\partial u_{\varepsilon}}{\partial \eta}=0 \quad
\text{on  $\Sigma_T$,}\quad u_{\varepsilon}(x,0)=u_0(x) \quad
\text{for $x \in \Omega$}.  \label{bbru_eq2.3b}
\end{align}
\end{subequations}
Here $v_0$ and~$u_0$ are functions satisfying the assumptions  of
Theorem~\ref{bbru_theo-weak}.

Since for any fixed $\uu \in \K$, \eqref{bbru_eq2.2a} is uniformly
parabolic,  standard theory for parabolic equations
 \cite{Ladyzenskaia:1968}
 immediately leads to the following lemma.
\begin{lemma}\label{bbru_unlema}
If  $v_0 \in L^\infty(\Om)$, then problem \eqref{bbru_S1-v} has a unique weak
solution $v_{\varepsilon} \in L^\infty(Q_T) \cap L^r(0,T;W^{2,r}(\Om)) \cap
C(0,T;L^r(\Om))$, for all $r>1$, satisfying in particular
\begin{align}\label{bbru_est-v-calssic}
\| v_{\varepsilon} \|_{L^\infty(Q_T)}
+\|v_{\varepsilon}\|_{L^\infty(0,T;L^2(\Om))}\le C,\quad
\| v_{\varepsilon} \|_{L^2(0,T;H^{1}(\Om))}\le C,\quad
 \|\pt v_{\varepsilon}\|_{L^2(Q_T)}\le C,
\end{align}
where $C>0$ is a constant that  depends only on
$\smash{\norm{v_0}_{L^\infty(\Om)}}$, $\alpha$, $\beta$, and
$\meas{(Q_T)}$.
\end{lemma}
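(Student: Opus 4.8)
The plan is to exploit that, with $\uu\in\K$ fixed, problem \eqref{bbru_S1-v} is \emph{linear}: the reaction term $g(\uu,v)=\alpha\uu-\beta v$ is affine in $v$, with forcing $\alpha\uu\in L^\infty(Q_T)$ satisfying $0\le\alpha\uu\le\alpha$ a.e. In particular the change of unknown $w=e^{\beta t}v_\varepsilon$ removes the zeroth-order term and turns \eqref{bbru_S1-v} into the heat equation with homogeneous Neumann data and a bounded source, so that every assertion reduces to classical facts about that equation; I would nevertheless carry out the argument directly on \eqref{bbru_S1-v}.

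\textbf{Step 1 (variational solution and energy estimate).} I would first produce a unique $v_\varepsilon\in L^2(0,T;H^1(\Om))\cap C(0,T;L^2(\Om))$ with $\pt v_\varepsilon\in L^2(0,T;(H^1(\Om))')$ by the standard Galerkin/Lions construction for linear parabolic equations, uniqueness being immediate from linearity by testing the difference of two solutions with itself. Testing the equation with $v_\varepsilon$ and applying Gronwall's lemma gives $\|v_\varepsilon\|_{L^\infty(0,T;L^2(\Om))}+\|v_\varepsilon\|_{L^2(0,T;H^1(\Om))}\le C$. \textbf{Step 2 ($L^\infty$ bound and nonnegativity).} For nonnegativity I would test with $v_\varepsilon^-:=\min\{v_\varepsilon,0\}$: on $\{v_\varepsilon<0\}$ one has $g(\uu,v_\varepsilon)=\alpha\uu-\beta v_\varepsilon\ge0$ since $\uu\ge0$, whence $\tfrac12\tfrac{d}{dt}\|v_\varepsilon^-\|_{2}^{2}+d\|\Grad v_\varepsilon^-\|_{2}^{2}\le0$ and $v_\varepsilon^-\equiv0$ because $v_0\ge0$. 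For the upper bound I would compare $v_\varepsilon$ with the solution $M(t)$ of the scalar ODE $\dot M=\alpha-\beta M$, $M(0)=\|v_0\|_{L^\infty(\Om)}$ — equivalently, test with $(v_\varepsilon-M(t))^+$ and use $g(\uu,v_\varepsilon)\le\alpha-\beta v_\varepsilon$ — obtaining $\|v_\varepsilon\|_{L^\infty(Q_T)}\le\max\{\|v_0\|_{L^\infty(\Om)},\alpha/\beta\}$ when $\beta>0$ and $\le\|v_0\|_{L^\infty(\Om)}+\alpha T$ when $\beta=0$. Since $|\Om|<\infty$, this also upgrades the $L^2$-in-space bounds of Step 1 to the $L^\infty$-in-space bounds asserted in \eqref{bbru_est-v-calssic}.

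\textbf{Step 3 (parabolic $L^r$-regularity).} Once $v_\varepsilon\in L^\infty(Q_T)$, the right-hand side $g(\uu,v_\varepsilon)=\alpha\uu-\beta v_\varepsilon$ lies in $L^\infty(Q_T)\subset L^r(Q_T)$ for every $r>1$. Since the principal part is the constant-coefficient Laplacian with homogeneous Neumann data, the $L^p$-maximal-regularity theory of \cite{Ladyzenskaia:1968} applies and yields, in a single step — no bootstrap is needed, the forcing being already in every $L^r$ — that $v_\varepsilon\in L^r(0,T;W^{2,r}(\Om))$ with $\pt v_\varepsilon\in L^r(Q_T)$; the anisotropic Sobolev embedding then gives $v_\varepsilon\in C(0,T;L^r(\Om))$ and, for $r=2$, $\|\pt v_\varepsilon\|_{L^2(Q_T)}\le C$. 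Collecting Steps~1--3 produces all the regularity claimed in the lemma and all the estimates in \eqref{bbru_est-v-calssic}.

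The only genuinely delicate point is the bookkeeping: one must check that the constant $C$ in \eqref{bbru_est-v-calssic} depends only on $\|v_0\|_{L^\infty(\Om)}$, $\alpha$, $\beta$ and $\meas(Q_T)$ and, in particular, is independent of the choice of $\uu\in\K$ and of the regularization parameter $\eps$ — the latter being automatic, as $\eps$ does not appear in \eqref{bbru_S1-v} at all. This uniformity is exactly what the comparison argument in Step~2 secures, since it reduces the $L^\infty$ control to the scalar ODE $\dot M=\alpha-\beta M$ whose solution depends on nothing but $\|v_0\|_{L^\infty(\Om)}$, $\alpha$ and $\beta$. All other ingredients are classical.
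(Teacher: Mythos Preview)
Your proposal is correct and takes essentially the same approach as the paper: the paper does not give a detailed proof at all but simply observes that \eqref{bbru_eq2.2a} is uniformly parabolic for fixed $\uu\in\K$ and defers entirely to the standard theory in \cite{Ladyzenskaia:1968}. Your Steps~1--3 are precisely the classical ingredients that this citation encapsulates (Galerkin construction plus energy estimate, comparison principle for the $L^\infty$ bound, and $L^r$-maximal regularity), so you have effectively unpacked the reference rather than departed from it.
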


The following lemma (see   \cite{Ladyzenskaia:1968})
 holds for  the quasilinear problem \eqref{bbru_S1-u}.

\begin{lemma}
If  $u_0 \in L^\infty(\Om)$, then, for any $\eps >0$, there exists
a unique weak solution $u_{\varepsilon} \in L^\infty (Q_T)\cap
L^p(0,T;W^{1,p}(\Om))$ to problem \eqref{bbru_S1-u}. \label{bbru_lem2.2}
\end{lemma}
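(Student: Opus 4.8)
The plan is to solve the quasilinear parabolic problem \eqref{bbru_S1-u} for fixed $\eps>0$ and fixed $v_\eps$ (with $\Grad v_\eps$ known from Lemma~\ref{bbru_unlema}) by a Galerkin approximation combined with monotonicity/compactness arguments, which is the standard route for equations of $p$-Laplacian type; since $A_\eps$ is strictly increasing and bi-Lipschitz on $[0,1]$ (because $a_\eps(s)=a(s)+\eps\ge\eps>0$ and $a\in C^1[0,1]$), the operator $w\mapsto-\Div(|\Grad w|^{p-2}\Grad w)$ acting on $A_\eps(u_\eps)$ is a bounded, coercive, pseudomonotone operator on $L^p(0,T;W^{1,p}(\Om))$, so the abstract theory (e.g. Lions, or \cite{Ladyzenskaia:1968}) applies once the lower-order drift term $\Div(\chi u_\eps f(u_\eps)\Grad v_\eps)$ is controlled. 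First I would write the weak formulation testing with $\varphi\in W^{1,p}(\Om)$, build finite-dimensional approximations $u_\eps^m$ in a Galerkin basis, and derive the basic energy estimate by testing with $u_\eps^m$ itself.

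For the a priori estimates: testing \eqref{bbru_eq2.3a} with $u_\eps$ gives $\tfrac12\tfrac{d}{dt}\|u_\eps\|_{L^2(\Om)}^2+\iint|\Grad A_\eps(u_\eps)|^{p-2}\Grad A_\eps(u_\eps)\cdot\Grad u_\eps\dx\dt=\iint\chi u_\eps f(u_\eps)\Grad v_\eps\cdot\Grad u_\eps\dx\dt$. On the diffusion term, $\Grad u_\eps=\Grad A_\eps(u_\eps)/a_\eps(u_\eps)$ and $a_\eps\ge\eps$, so this term is bounded below by $\eps^{-\|a_\eps\|_\infty}$-type control — more cleanly, one estimates $\|\Grad A_\eps(u_\eps)\|_{L^p(Q_T)}$ directly since $a_\eps$ is bounded above and below away from zero. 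The drift term is handled by Young's inequality, absorbing a small multiple of $\|\Grad A_\eps(u_\eps)\|_{L^p}^p$ and using $u_\eps f(u_\eps)\in L^\infty$ (as $f\in C^1[0,1]$) together with $\Grad v_\eps\in L^p(Q_T)$ from Lemma~\ref{bbru_unlema}. This yields $u_\eps\in L^\infty(0,T;L^2(\Om))$ and $A_\eps(u_\eps)\in L^p(0,T;W^{1,p}(\Om))$, whence $\Grad u_\eps\in L^p(Q_T)$ and $u_\eps\in L^p(0,T;W^{1,p}(\Om))$; comparison in the equation then gives $\pt u_\eps\in L^{p'}(0,T;(W^{1,p}(\Om))')$. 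These bounds are uniform in $m$, so one passes to the limit $m\to\infty$ using Aubin--Lions compactness to get $u_\eps^m\to u_\eps$ strongly in $L^p(Q_T)$ and a.e., and the monotonicity (Minty--Browder) trick to identify the weak limit of $|\Grad A_\eps(u_\eps^m)|^{p-2}\Grad A_\eps(u_\eps^m)$. The bounds $0\le u_\eps\le 1$ follow by testing the equation with $(u_\eps-1)^+$ and with $u_\eps^-$ (the Stampacchia truncation method), using that the drift coefficient $\chi u f(u)$ vanishes at $u=0$ and $u=1$ so that the cross-diffusion term does not obstruct the maximum principle — this is where the structural assumption $f(1)=0$ is essential.

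The main obstacle I anticipate is twofold. First, the uniqueness claim: with the nonlinear drift term $\Div(\chi u_\eps f(u_\eps)\Grad v_\eps)$ present, subtracting two solutions and testing with the difference leaves a term like $\iint\chi\bigl(u_1f(u_1)-u_2f(u_2)\bigr)\Grad v_\eps\cdot\Grad(u_1-u_2)\dx\dt$, which one bounds using the Lipschitz continuity of $s\mapsto sf(s)$ on $[0,1]$ and Young's inequality, but the $\Grad v_\eps$ factor only lies in $L^p(Q_T)$ (not $L^\infty$), so absorbing it requires a Gronwall argument in $L^2$ exploiting the strict monotonicity coming from the $\eps$-regularization — doable here precisely because $a_\eps\ge\eps>0$, though the constants blow up as $\eps\to0$, which is consistent with the loss of uniqueness for the limit problem noted in the introduction. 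Second, making the $p$-Laplacian monotonicity argument rigorous in the time-dependent setting requires care with the integration-by-parts formula for $\pt u_\eps$ paired against $u_\eps$ in the low-regularity class $u_\eps\in L^p(0,T;W^{1,p})\cap C_w(0,T;L^2)$ with $\pt u_\eps\in L^{p'}(0,T;(W^{1,p})')$, which is the usual technical point handled by a standard Lemma (Temam/Lions) on the chain rule; I would invoke that rather than prove it. I would therefore present the proof by reducing everything to the cited parabolic theory in \cite{Ladyzenskaia:1968} for the existence and to the monotonicity structure plus the vanishing of $u f(u)$ at the endpoints for the truncation bounds and uniqueness.
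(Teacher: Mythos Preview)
The paper gives no proof of this lemma at all: it simply states the result and cites \cite{Ladyzenskaia:1968} for the quasilinear parabolic theory. Your proposal is therefore not competing with an argument in the paper but rather filling in what the authors delegate to the classical reference; the Galerkin/monotonicity route you outline is precisely the standard machinery behind that citation, and your handling of the drift term via Young's inequality, the $[0,1]$ bounds via Stampacchia truncations exploiting $f(1)=0$, and uniqueness via Gronwall using the strict positivity $a_\eps\ge\eps$ are all sound and in the expected spirit.

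One small organizational remark: in the paper the $L^\infty$ bound $0\le u_\eps\le 1$ is stated separately as Lemma~\ref{bbru_lem-classic:est}(i) (proved by reference to \cite{bku}) rather than as part of Lemma~\ref{bbru_lem2.2}, so strictly speaking the authors do not rely on the truncation argument inside this lemma; they take existence/uniqueness in $L^\infty(Q_T)\cap L^p(0,T;W^{1,p}(\Om))$ from \cite{Ladyzenskaia:1968} and establish the pointwise bounds afterwards. Your choice to fold the truncation step into the existence proof is perfectly reasonable and arguably cleaner, but be aware that you are then implicitly extending $f$ (and $a$) outside $[0,1]$ in a way that preserves the vanishing of $uf(u)$ at the endpoints, which you should say explicitly.
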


\subsection{The fixed-point method}
\label{bbru_Sect:fixed}
We define  a map $\Theta:\K\to \K$ such that
$\Theta(\uu)=u_{\varepsilon}$, where $u_{\varepsilon}$
solves~\eqref{bbru_S1-u},
 i.e., $\Theta$ is the solution operator of~\eqref{bbru_S1-u}
associated with the coefficient~$\uu$ and the solution
$v_{\varepsilon}$ coming from \eqref{bbru_S1-v}. By using the
Schauder fixed-point theorem, we now prove that $\Theta$ has a fixed
point. First, we need to show that $\Theta$ is continuous. Let
$\smash{\{\uu_n\}_{n \in \mathbb{N}} }$ be a sequence in $\K$ and
$\uu \in \K$ be such that $\uu_n\to \uu$ in $L^p(Q_T)$ as $n \to
\infty$. Define $u_{\varepsilon n}:=\Theta(\uu_n)$,  i.e.,
$u_{\varepsilon  n}$ is the solution of~\eqref{bbru_S1-u} associated
with $\uu_n$ and the solution $v_{\varepsilon n}$ of
\eqref{bbru_S1-v}. To show that $u_{\varepsilon n}  \to \Theta(\uu)$
in $L^p(Q_T)$, we start with the following lemma.

\begin{lemma}\label{bbru_lem-classic:est}
The solutions $u_{\varepsilon n}$ to problem \eqref{bbru_S1-u} satisfy
\begin{itemize}
\item[(i)]    $0\le u_{\varepsilon n}(x,t)\le 1$  for a.e. $(x,t)\in Q_T$.
\item[(ii)]  The sequence
$\smash{\{ u_{\varepsilon n }\}_{n \in \mathbb{N}} }$ is bounded in
$L^p(0,T;W^{1,p}(\Om))\cap L^\infty(0,T;L^2(\Om))$.
\item[(iii)]  The sequence
$\smash{\{ u_{\varepsilon n} \}_{n \in \mathbb{N}} }$  is relatively
compact in $L^p(Q_T)$.
\end{itemize}
\end{lemma}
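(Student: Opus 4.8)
The plan is to establish the three assertions of Lemma~\ref{bbru_lem-classic:est} in order, relying on the energy structure of \eqref{bbru_S1-u} and the uniform bounds for $v_{\varepsilon n}$ already recorded in Lemma~\ref{bbru_unlema}.

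\textbf{Step (i): a priori bounds $0\le u_{\varepsilon n}\le 1$.}
First I would verify that the constant functions $0$ and $1$ act as sub- and supersolutions of \eqref{bbru_S1-u}. For the lower bound, test \eqref{bbru_eq2.3a} with $\varphi=-(u_{\varepsilon n})^- =-\min\{u_{\varepsilon n},0\}$; since $A_\eps$ is nondecreasing and $A_\eps(0)=0$, the diffusion term has a sign, and the chemotactic term $\chi u_{\varepsilon n}f(u_{\varepsilon n})\Grad v_{\varepsilon n}$ vanishes where $u_{\varepsilon n}=0$ and is multiplied by $\Grad(u_{\varepsilon n})^-$, so it can be absorbed. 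Combined with $u_0\ge 0$, a Gronwall argument forces $(u_{\varepsilon n})^-=0$. For the upper bound, test with $\varphi=(u_{\varepsilon n}-1)^+$; using $f(1)=0$ together with $f\in C^1$ so that $uf(u)$ is Lipschitz and vanishes at $u=1$, the cross-diffusion term is again controlled by the nonnegativity/structure, and $u_0\le 1$ yields $(u_{\varepsilon n}-1)^+=0$. (This is precisely the invariant-region argument from \cite{bku}, and I would cite it rather than belabor the computation.)

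\textbf{Step (ii): energy estimate.}
With (i) in hand, the coefficients $f(u_{\varepsilon n})$, $u_{\varepsilon n}f(u_{\varepsilon n})$ and $a_\eps(u_{\varepsilon n})$ are all bounded uniformly in $n$. I would test \eqref{bbru_eq2.3a} with $\varphi=A_\eps(u_{\varepsilon n})$. The time term gives $\frac{d}{dt}\int_\Om \Psi_\eps(u_{\varepsilon n})\dx$ where $\Psi_\eps'=A_\eps$, a nonnegative convex primitive bounded on $[0,1]$; the principal term produces $\iint_{Q_T}|\Grad A_\eps(u_{\varepsilon n})|^p\dx\dt$; and the chemotactic term is estimated by Cauchy--Schwarz and Young's inequality as
\begin{align*}
\Bigl|\iint_{Q_T}\chi u_{\varepsilon n}f(u_{\varepsilon n})\Grad v_{\varepsilon n}\cdot\Grad A_\eps(u_{\varepsilon n})\dx\dt\Bigr|
\le \tfrac12\iint_{Q_T}|\Grad A_\eps(u_{\varepsilon n})|^p\dx\dt + C\iint_{Q_T}|\Grad v_{\varepsilon n}|^{p'}\dx\dt,
\end{align*}
where the last integral is finite and bounded uniformly by Lemma~\ref{bbru_unlema} (which gives $v_{\varepsilon n}\in L^r(0,T;W^{2,r})$ for every $r$, hence $\Grad v_{\varepsilon n}\in L^{p'}(Q_T)$ with a bound depending only on the data). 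Absorbing the half back to the left gives a uniform bound on $\|\Grad A_\eps(u_{\varepsilon n})\|_{L^p(Q_T)}$ and on $\sup_t\int_\Om\Psi_\eps(u_{\varepsilon n})\dx$. Since $0\le u_{\varepsilon n}\le 1$ already yields the $L^\infty(0,T;L^2(\Om))$ bound directly, and since $a_\eps\ge\eps>0$ (for fixed $\eps$) lets me pass from a gradient bound on $A_\eps(u_{\varepsilon n})$ to one on $u_{\varepsilon n}$ itself — indeed $\Grad A_\eps(u_{\varepsilon n})=a_\eps(u_{\varepsilon n})\Grad u_{\varepsilon n}$ and $a_\eps$ is bounded above and below on $[0,1]$ — the sequence is bounded in $L^p(0,T;W^{1,p}(\Om))$, which is (ii). I would note the $\eps$-dependence of this constant is harmless here since $\eps$ is fixed throughout the fixed-point argument.

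\textbf{Step (iii): compactness in $L^p(Q_T)$.}
This is the part requiring the most care and is the main obstacle. I would invoke an Aubin--Lions--Simon compactness theorem: having (ii) — boundedness in $L^p(0,T;W^{1,p}(\Om))$, and $W^{1,p}(\Om)\hookrightarrow\hookrightarrow L^p(\Om)\hookrightarrow(W^{1,p}(\Om))'$ — it suffices to bound $\{\pt u_{\varepsilon n}\}$ in $L^{p'}(0,T;(W^{1,p}(\Om))')$. From the weak formulation,
\begin{align*}
\Bigl|\int_0^T\langle\pt u_{\varepsilon n},\varphi\rangle\dt\Bigr|
\le \iint_{Q_T}\bigl(|\Grad A_\eps(u_{\varepsilon n})|^{p-1}+\chi|u_{\varepsilon n}f(u_{\varepsilon n})||\Grad v_{\varepsilon n}|\bigr)|\Grad\varphi|\dx\dt,
\end{align*}
and the first factor lies in $L^{p'}(Q_T)$ with a uniform bound by (ii), while the second lies in $L^{p'}(Q_T)$ by the $L^\infty$ bound on $u_{\varepsilon n}f(u_{\varepsilon n})$ and $\Grad v_{\varepsilon n}\in L^{p'}(Q_T)$; hence $\|\pt u_{\varepsilon n}\|_{L^{p'}(0,T;(W^{1,p}(\Om))')}\le C$. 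Aubin--Lions then gives a subsequence converging strongly in $L^p(Q_T)$, which is the relative compactness claimed in (iii). The subtlety I would flag is that strictly speaking the map $\Theta$ involves solving \emph{both} \eqref{bbru_S1-v} and \eqref{bbru_S1-u}; I would remark that the bounds on $v_{\varepsilon n}$ in Lemma~\ref{bbru_unlema} are uniform in $\uu_n$ (they depend only on $\|v_0\|_{L^\infty}$, $\alpha$, $\beta$, $\meas(Q_T)$), so all constants above are genuinely independent of $n$, and the whole estimate chain closes.
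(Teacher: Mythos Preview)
Your proposal is correct and follows essentially the same route as the paper, which simply refers the reader to the proof of Lemma~2.3 in \cite{bku} together with the observation that $\{\pt u_{\varepsilon n}\}$ is uniformly bounded in $L^{p'}(0,T;(W^{1,p}(\Om))')$. Your Steps~(i)--(iii) spell out exactly this argument: the invariant-region test-function computation for (i), the energy estimate obtained by testing with $A_\eps(u_{\varepsilon n})$ for (ii), and the dual bound on $\pt u_{\varepsilon n}$ combined with Aubin--Lions--Simon compactness for (iii); the paper itself carries out the analogue of your Step~(ii) later in \eqref{bbru_est1-u-weak} and uses the same compactness mechanism via the space $\mathcal{W}$ and \cite{R12}.
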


\begin{proof} The proof follows from that of
 Lemma~2.3 in  \cite{bku} if we take into account that
 $\smash{\{ \partial_t u_{\varepsilon n} \}_{n \in \mathbb{N}}}$ is
 uniformly bounded in $\smash{L^{p'}(0,T;(W^{1,p}(\Om))')}$.
\end{proof}

The following lemma contains a classical result
(see  \cite{Ladyzenskaia:1968}).

\begin{lemma}\label{bbru_lem-classic:est-bis}
There exists a function $v_{\varepsilon } \in L^2(0,T;H^{1}(\Om))$ such that the
sequence $\smash\{v_{\varepsilon n}\}_{n \in \mathbb{N}}$ converges strongly to $v$ in
$L^2(0,T;H^{1}(\Om))$.
\end{lemma}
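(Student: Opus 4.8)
The plan is to identify the limit function as $v_\eps$, the unique weak solution of \eqref{bbru_S1-v} associated with the coefficient $\uu$ (which exists by Lemma~\ref{bbru_unlema}), and then to prove the strong convergence $v_{\eps n}\to v_\eps$ in $L^2(0,T;H^1(\Om))$ directly, by means of an energy estimate for the difference $w_n:=v_{\eps n}-v_\eps$.

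First I would observe that the a priori bounds \eqref{bbru_est-v-calssic} hold uniformly in $n$: since $0\le\uu_n\le 1$ a.e.\ in $Q_T$ (because $\uu_n\in\K$), the source term $g(\uu_n,v_{\eps n})=\alpha\uu_n-\beta v_{\eps n}$ is controlled independently of $n$, so the constant $C$ in Lemma~\ref{bbru_unlema} does not depend on $n$. In particular, $\{v_{\eps n}\}_{n\in\N}$ is bounded in $L^2(0,T;H^1(\Om))$ and $\{\pt v_{\eps n}\}_{n\in\N}$ is bounded in $L^2(Q_T)$; the same being true of $v_\eps$, the difference $w_n$ belongs to $L^2(0,T;H^1(\Om))$ with $\pt w_n\in L^2(Q_T)$ and $w_n(0)=0$.

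Next, subtracting the weak formulations of \eqref{bbru_S1-v} written for $\uu_n$ and for $\uu$, the function $w_n$ is a weak solution of
$$\pt w_n-d\Delta w_n=\alpha(\uu_n-\uu)-\beta w_n \quad\text{in }Q_T, \qquad \frac{\partial w_n}{\partial\eta}=0 \quad\text{on }\Sigma_T, \qquad w_n(0)=0 .$$
Using $w_n$ itself as a test function and integrating over $\Om\times(0,t)$ (the integration by parts in time being legitimate since $\pt w_n\in L^2(Q_T)$), one obtains, thanks to the Neumann condition,
$$\frac12\,\|w_n(t)\|_{L^2(\Om)}^2+d\int_0^t\!\!\int_{\Om}|\Grad w_n|^2\dx\ds+\beta\int_0^t\!\!\int_{\Om}|w_n|^2\dx\ds=\alpha\int_0^t\!\!\int_{\Om}(\uu_n-\uu)\,w_n\dx\ds .$$
By Young's inequality the right-hand side is at most $\frac{\alpha^2}{2}\|\uu_n-\uu\|_{L^2(Q_T)}^2+\frac12\int_0^t\!\int_{\Om}|w_n|^2\dx\ds$, and since $p\ge 2$ and $Q_T$ is bounded, $\|\uu_n-\uu\|_{L^2(Q_T)}\le C\,\|\uu_n-\uu\|_{L^p(Q_T)}\to 0$. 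Discarding the nonnegative terms and applying Gronwall's lemma to $t\mapsto\|w_n(t)\|_{L^2(\Om)}^2$ gives $\|w_n\|_{L^\infty(0,T;L^2(\Om))}\to 0$; feeding this back into the identity above with $t=T$ yields $d\,\|\Grad w_n\|_{L^2(Q_T)}^2\le\frac{\alpha^2}{2}\|\uu_n-\uu\|_{L^2(Q_T)}^2+\frac12\|w_n\|_{L^2(Q_T)}^2\to 0$. Therefore $w_n\to 0$ in $L^2(0,T;H^1(\Om))$, i.e., $v_{\eps n}\to v_\eps$ strongly in $L^2(0,T;H^1(\Om))$, which is the assertion; since $v_\eps$ is uniquely determined, the convergence holds along the whole sequence.

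I do not expect a genuine obstacle here. The only points requiring care are the uniformity in $n$ of the estimates borrowed from Lemma~\ref{bbru_unlema} (which rests on $\|\uu_n\|_{\infty}\le 1$) and the embedding $L^p(Q_T)\hookrightarrow L^2(Q_T)$ that converts the given $L^p$-convergence of $\{\uu_n\}$ into the $L^2$-convergence needed on the right-hand side. The affine form of $g$ in \eqref{bbru_gchoice} is what makes the difference equation linear and the estimate this transparent; for a general Lipschitz $g$ one would instead absorb a term of the form $\|\uu_n-\uu\|_{L^2}\,\|w_n\|_{L^2}$ together with a multiple of $\|w_n\|_{L^2}^2$ and conclude in exactly the same way via Gronwall.
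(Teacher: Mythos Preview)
Your argument is correct. The energy identity for $w_n=v_{\eps n}-v_\eps$ is valid because $w_n\in L^2(0,T;H^1(\Om))$ with $\pt w_n\in L^2(Q_T)$ and $w_n(0)=0$, and the affine form \eqref{bbru_gchoice} makes the difference equation linear so that Young's inequality and Gronwall close the loop exactly as you describe. The passage from $L^p(Q_T)$-convergence of $\{\uu_n\}$ to $L^2(Q_T)$-convergence via the embedding (using $p\ge 2$ and $|Q_T|<\infty$) is also fine, and the uniqueness of $v_\eps$ from Lemma~\ref{bbru_unlema} indeed upgrades subsequential to full-sequence convergence.

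As for comparison: the paper does not actually prove this lemma. It merely states that the result is classical and refers to \cite{Ladyzenskaia:1968}. Your proof is therefore not a different route so much as an explicit, self-contained realization of the ``classical'' continuous-dependence estimate that the paper takes for granted. What your approach buys is transparency---one sees precisely where the uniform $L^\infty$ bound on $\uu_n$ (via $\uu_n\in\K$), the linearity of $g$, and the convergence $\uu_n\to\uu$ in $L^p(Q_T)$ enter---whereas the paper's citation leaves these dependencies implicit. Your closing remark that a general Lipschitz $g$ would be handled the same way is also correct and worth keeping.
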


Lemmas \ref{bbru_lem2.2}--\ref{bbru_lem-classic:est-bis}  imply that there exist
$u_{\varepsilon }\in L^p(0,T;W^{1,p}(\Om))$ and $v_{\varepsilon } \in L^2(0,T;H^{1}(\Om))$ such that,
up to extracting subsequences if necessary,
$u_{\varepsilon n} \to u_{\varepsilon }$ strongly in $L^p(Q_T)$ and
$v_{\varepsilon n} \to v_{\varepsilon }$
 strongly in $L^2(0,T;H^{1}(\Om))$  as $n \to \infty$,
 so $\Theta$ is indeed continuous on $\K$.
Moreover, due to  Lemma \ref{bbru_lem-classic:est}, $\Theta(\K)$ is
bounded in the set
\begin{equation*}
\mathcal{W}:=\bigl\{ u \in L^p\bigl(0,T;W^{1,p}(\Om)\bigr) \, : \,  \pt u \in
L^{p'} \bigl(0,T;(W^{1,p}(\Om))'\bigr) \bigr\}.
\end{equation*}
Similarly to the results of  \cite{R12}, it can be shown
that \mbox{$\mathcal{W} \hookrightarrow L^{p}(Q_T)$} is compact,
and thus $\Theta$ is compact. Now, by the
Schauder fixed point theorem, the operator $\Theta$ has a fixed
point $u_\eps$ such that $\Theta(u_\eps)=u_\eps$. This implies
that there exists a solution ($u_\eps, v_\eps)$ of
\begin{eqnarray}
\int_0^T \left \langle \pt u_\eps, \varphi \right \rangle \dt
+\iint_{Q_T} \Bigl\{ |\Grad A_\eps(u_\eps)|^{p-2}\Grad
A_\eps(u_\eps)-\chi u_\eps f(u_\eps) \Grad v_\eps  \Bigr\} \cdot
\Grad \varphi\dx\dt=0, \nonumber \\
\int_0^T \left \langle \pt v_\eps , \psi \right \rangle \dt +d
\iint_{Q_T} \Grad v_\eps \cdot \Grad \psi \dx\dt =\iint_{Q_T}g(u_\eps,v_\eps)\psi \dx\dt, \label{bbru_form:ueps} \\
\forall \varphi \in L^{p}(0,T;W^{1,p}(\Om))\text{ and }\forall \psi
\in L^{2}(0,T;H^{1}(\Om)). \nonumber
\end{eqnarray}

\subsection{Existence of weak solutions}
\label{bbru_Sect:proof-weak}
We now
pass to the
limit $\eps \to 0$  in
solutions  $(u_{\eps},v_{\eps})$ to obtain weak solutions of the original system
\eqref{bbru_S1-S2-S3}. From the previous lemmas and
  considering  \eqref{bbru_eq2.1b}, we obtain  the
following result.
\begin{lemma}\label{bbru_maxprinc-est_v_weak}
For each fixed $\eps>0$, the weak solution $(u_\eps,v_\eps)$
to \eqref{bbru_S-reg} satisfies the  maximum principle
\begin{align}\label{bbru_eq:maxprinc}
0\le u_\eps(x,t)\le 1 \quad \text{\em and} \quad v_\eps(x,t) \geq 0
 \quad \text{\em for a.e. $(x,t)\in Q_T$}.
\end{align}
Moreover, the first two estimates of
\eqref{bbru_est-v-calssic} in Lemma~\ref{bbru_unlema} are independent of $\eps$.
\end{lemma}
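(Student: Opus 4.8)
The plan is to establish \eqref{bbru_eq:maxprinc} by deriving a uniform (in $\eps$) maximum principle for each component separately, and then to observe that the estimates on $v_\eps$ in \eqref{bbru_est-v-calssic} depend only on $\norm{v_0}_{L^\infty(\Om)}$, $\alpha$, $\beta$ and $\meas(Q_T)$ --- none of which involve $\eps$ --- so their $\eps$-independence is automatic. Concretely, for the $u_\eps$-bound I would test \eqref{bbru_eq2.1a} (equivalently the first equation in \eqref{bbru_form:ueps}) with the truncation $\varphi = (u_\eps)_- := \min\{u_\eps,0\}$ to show $u_\eps \ge 0$, and with $\varphi = (u_\eps - 1)_+ := \max\{u_\eps - 1, 0\}$ to show $u_\eps \le 1$. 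For the lower bound, on the set $\{u_\eps < 0\}$ we have $a(u_\eps) = 0$ (since $a$ is supported on $[0,1]$, extended by zero), hence $a_\eps(u_\eps) = \eps$ and $\Grad A_\eps(u_\eps) = \eps \Grad u_\eps$; moreover $f(u_\eps)$ is evaluated at a negative argument, where we extend $f$ smoothly (e.g. by $f(s) = f(0)$ for $s<0$, or simply note the chemotactic flux term can be handled since $u_\eps f(u_\eps)$ vanishes at $u_\eps = 0$). The diffusion term then gives $\eps^{p-1} \iint_{\{u_\eps<0\}} |\Grad u_\eps|^p \ge 0$ after integration, the time term gives $\frac{1}{2}\frac{d}{dt}\norm{(u_\eps)_-}_{L^2(\Om)}^2$, and the chemotactic term is controlled using that $u \mapsto u f(u)$ is Lipschitz and vanishes at $0$, together with the bound on $\Grad v_\eps$; a Gronwall argument with $(u_\eps)_-(0) = (u_0)_- = 0$ forces $(u_\eps)_- \equiv 0$.

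For the upper bound $u_\eps \le 1$, I would test with $(u_\eps - 1)_+$. On $\{u_\eps > 1\}$ we again have $a(u_\eps) = 0$, so $\Grad A_\eps(u_\eps) = \eps \Grad(u_\eps - 1)_+$, and $f(u_\eps) = 0$ there because $f(1) = 0$ and we extend $f$ by zero for $s > 1$ (the hypothesis $f \in C^1[0,1]$, $f(1)=0$, together with the modeling assumption that $\chi u f(u)$ vanishes for $u \ge u_{\mathrm m} = 1$, makes this natural). Thus the chemotactic term simply disappears on this set, the diffusion term is again nonnegative, and we are left with $\frac{1}{2}\frac{d}{dt}\norm{(u_\eps-1)_+}_{L^2(\Om)}^2 \le 0$; since $(u_0 - 1)_+ = 0$ by hypothesis, we conclude $(u_\eps - 1)_+ \equiv 0$. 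The nonnegativity $v_\eps \ge 0$ follows the same way from \eqref{bbru_eq2.1b}: test with $(v_\eps)_-$, use that $g(u_\eps, v_\eps) = \alpha u_\eps - \beta v_\eps \ge \alpha u_\eps \ge -\beta v_\eps$ and in particular $g(u_\eps, v_\eps)(v_\eps)_- = (\alpha u_\eps - \beta v_\eps)(v_\eps)_- \ge -\beta v_\eps (v_\eps)_- = \beta (v_\eps)_-^2$ on $\{v_\eps < 0\}$ once we know $u_\eps \ge 0$, so that $\frac{1}{2}\frac{d}{dt}\norm{(v_\eps)_-}_{L^2(\Om)}^2 + d\norm{\Grad (v_\eps)_-}_{L^2(\Om)}^2 \le -\beta \norm{(v_\eps)_-}_{L^2(\Om)}^2 \le 0$, and Gronwall with $(v_0)_- = 0$ gives $v_\eps \ge 0$.

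The main technical care --- rather than a genuine obstacle --- lies in two places. First, one must justify that the truncations $(u_\eps)_-$, $(u_\eps - 1)_+$, $(v_\eps)_-$ are admissible test functions: this is standard since $u_\eps \in L^p(0,T;W^{1,p}(\Om))$ with $\pt u_\eps \in L^{p'}(0,T;(W^{1,p}(\Om))')$ (and similarly for $v_\eps$), so the integration-by-parts formula $\int_0^T \langle \pt u_\eps, (u_\eps)_- \rangle \dt = \frac{1}{2}\norm{(u_\eps)_-(T)}_{L^2(\Om)}^2 - \frac{1}{2}\norm{(u_0)_-}_{L^2(\Om)}^2$ holds by a density/chain-rule argument (one may first do it on the regularized level where more regularity is available, as in Lemma~\ref{bbru_lem2.2}). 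Second, one must treat the chemotactic flux on the bad sets carefully: on $\{u_\eps<0\}$ the product $\chi u_\eps f(u_\eps) \Grad v_\eps \cdot \Grad (u_\eps)_-$ must be estimated by $C\, |u_\eps|\, |\Grad v_\eps|\, |\Grad (u_\eps)_-| \le C |(u_\eps)_-|\,|\Grad v_\eps|\,|\Grad(u_\eps)_-|$ and absorbed --- using Young's inequality, the $\eps^{p-1}|\Grad(u_\eps)_-|^p$ term for $p=2$, or more simply the observation that on this set one may redefine the extension of $f$ so that $uf(u) \equiv 0$ for $u \le 0$ (consistent with nonnegativity of densities), making the term vanish outright. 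The cleanest route is to extend both $a$ and $s\mapsto sf(s)$ by zero outside $[0,1]$ from the start, so that both the diffusive and the chemotactic fluxes are identically zero on $\{u_\eps < 0\}\cup\{u_\eps > 1\}$ except for the $\eps$-regularization of diffusion, which only helps. With that convention the proof reduces to three essentially identical one-line energy inequalities plus Gronwall, and the final sentence about $\eps$-independence of the first two estimates in \eqref{bbru_est-v-calssic} is immediate by inspecting the dependence of $C$ in Lemma~\ref{bbru_unlema}, which is inherited verbatim since the equation \eqref{bbru_eq2.1b} for $v_\eps$ and its data do not involve $\eps$.
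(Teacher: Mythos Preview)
Your approach is correct and is essentially what the paper relies on, though the paper does not write out a proof at all: it simply states ``From the previous lemmas and considering \eqref{bbru_eq2.1b}, we obtain the following result.'' Concretely, the bound $0\le u_\eps\le 1$ is already implicit in the fixed-point construction, since $u_\eps\in\K$ by definition and Lemma~\ref{bbru_lem-classic:est}(i) (whose proof is delegated to \cite{bku}) is precisely the truncation argument you spell out; the bound $v_\eps\ge 0$ follows from standard parabolic theory for the linear equation \eqref{bbru_eq2.1b}. So your proposal is the explicit version of what the paper leaves packaged in the previous lemmas, and the observation that the constants in \eqref{bbru_est-v-calssic} do not involve~$\eps$ is exactly how the paper intends the last sentence to be read.

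One small sign slip in your $v_\eps\ge 0$ step: with your convention $(v_\eps)_-=\min\{v_\eps,0\}\le 0$, on $\{v_\eps<0\}$ you have $\alpha u_\eps(v_\eps)_-\le 0$ (not $\ge 0$) and $-\beta v_\eps(v_\eps)_-=-\beta(v_\eps)_-^2$ (not $+\beta(v_\eps)_-^2$). The correct chain is
\[
g(u_\eps,v_\eps)(v_\eps)_-=(\alpha u_\eps-\beta v_\eps)(v_\eps)_-\le -\beta(v_\eps)_-^2\le 0,
\]
which is exactly what you then feed into the energy inequality, so the conclusion is unaffected.
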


Lemma \ref{bbru_maxprinc-est_v_weak}  implies that there exists
 a constant  $C>0$, which does not depend on $\eps$, such that
\begin{align}\label{bbru_est-v-weak}
\norm{v_\eps}_{L^\infty(Q_T)}
+\norm{v_\eps}_{L^\infty(0,T;L^2(\Om))}\le C,\quad
&\norm{v_\eps}_{L^2(0,T;H^{1}(\Om))}\le C.
\end{align}
Notice that, from \eqref{bbru_eq:maxprinc} and
\eqref{bbru_est-v-weak}, the term $g(u_\eps,v_\eps)$ is bounded.
Thus, in light of classical results on $L^r$ regularity,  there
exists another constant $C>0$, which is independent of $\eps$, such that
\begin{align*}
\norm{\pt v_\eps}_{L^r(Q_T)}+\norm{v_\eps}_{L^r(0,T;W^{1,r}(\Om))}
\le C \text{ for all $r>1$}.
\end{align*}
Taking $\varphi=A_\eps(u_\eps)$ as a test function in
\eqref{bbru_form:ueps}
yields
\begin{align*}
\int_0^T\langle \pt u_\eps,A(u_\eps)\rangle \dt+
\eps \int_0^T\langle \pt u_\eps,u_\eps\rangle \dt+
\iint_{Q_T}|\Grad A_\eps(u_\eps)|^p \dx\dt
-\iint_{Q_T}\chi f(u_\eps)
\Grad v_\eps\cdot\Grad A_\eps(u_\eps)\dx\dt=0;
\end{align*}
then, using \eqref{bbru_est-v-weak}, the uniform $L^\infty$  bound
on $u_\eps$,  an application of Young's inequality to treat the term
$\Grad v_\eps \cdot \Grad A_\eps(u_\eps)$,  and defining $\smash{
\mathcal{A}_{\varepsilon} (s):=\int_0^s A_{\varepsilon} (r) \, dr}$, we obtain
\begin{equation}
\label{bbru_est1-u-weak}
\sup_{0 \le t \le T} \int_{\Om}\A_{\varepsilon} (u_\eps)(x,t) \dx
+ \eps \sup_{0 \le t \le T} \int_{\Om}\frac{\abs{u_\eps(x,t)}^2}{2} \dx
+ \iint_{Q_T}|\Grad A_\eps(u_\eps)|^p \dx\dt \le C 
\end{equation}
for some constant $C>0$ independent of $\eps$.

Let $\varphi \in L^p(0,T;W^{1,p}(\Om))$. Using the weak formulation
\eqref{bbru_form:ueps}, \eqref{bbru_est-v-weak}  and
\eqref{bbru_est1-u-weak}, we may follow the reasoning in \cite{bku}
to deduce the bound
\begin{equation}\label{bbru_est2-u-weak}
\norm{ \pt u_\eps}_{L^{p'}(0,T;(W^{1,p}(\Om))')} \le C.
\end{equation}
Therefore, from \eqref{bbru_est-v-weak}--\eqref{bbru_est2-u-weak}
and standard compactness
 results (see  \cite{R12}), we can extract subsequences, which we do
 not relabel, such that, as $\eps \to 0$,
 \begin{align}
       \begin{cases}
         A_\eps(u_\eps) \to A(u)
     \text{ strongly in $L^p(Q_T)$ and a.e.},\\
     u_\eps \to u \text{ strongly in $L^q(Q_T)$ for all $q\geq 1$},\\
         v_\eps \to v\text{ strongly in }L^2(Q_T), \\
         \Grad v_\eps \to \Grad v  \text{ weakly in $L^2(Q_T)$}\text{ and } \Grad A_\eps(u_\eps)
     \to \Grad A(u) \text{ weakly in }L^p(Q_T), \\
         |\Grad A_\eps(u_\eps)|^{p-2}\Grad A_\eps(u_\eps) \to \Gamma_1
     \text{ weakly in }L^{p'}(Q_T), \\
         v_\eps \to v \text{ weakly in }L^{2}(0,T;H^{1}(\Om)), \\
         \pt u_\eps \to \pt u \text{ weakly in }L^{p'}(0,T;(W^{1,p}(\Om))')
     \text{ and }\pt v_\eps \to \pt v
     \text{ weakly in }L^{2}(0,T;(H^{1}(\Om))').
         \label{bbru_pa}
         \end{cases}
 \end{align}
To establish the  second convergence in  \eqref{bbru_pa}, we have applied the
dominated convergence theorem to $\smash{u_\eps=
A_\eps^{-1}(A_\eps(u_\eps))}$ (recall that $A$ is monotone) and the
weak-$\star$ convergence of $u_\eps$ to $u$ in $L^\infty(Q_T)$. We
also have the following lemma, see \cite{bku} for its proof.

\begin{lemma} \label{bbru_str-con-grad-v}
The  functions  $v_{\eps}$  converge strongly to $v$ in
$L^{2}(0,T;H^{1}(\Om))$ as $\varepsilon \to 0$.
\end{lemma}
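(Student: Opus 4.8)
The plan is to upgrade the weak convergence $\Grad v_\eps\rightharpoonup\Grad v$ in $L^2(Q_T)$ recorded in \eqref{bbru_pa} to strong convergence of the gradients; once this is done, combining it with the strong $L^2(Q_T)$ convergence of $v_\eps$ (also in \eqref{bbru_pa}) immediately gives $v_\eps\to v$ strongly in $L^2(0,T;H^1(\Om))$.

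First I would pass to the limit $\eps\to0$ in the second equation of \eqref{bbru_form:ueps} to check that the limit $v$ is itself a weak solution of \eqref{bbru_eq1.1b} with $v(0)=v_0$. This is routine: the parabolic term is handled by the weak convergence $\pt v_\eps\rightharpoonup\pt v$ in $L^2(0,T;(H^1(\Om))')$, the diffusion term by the weak convergence $\Grad v_\eps\rightharpoonup\Grad v$ in $L^2(Q_T)$, and -- since $g(u,v)=\alpha u-\beta v$ is affine in $(u,v)$ -- the reaction term by the strong convergences $u_\eps\to u$ and $v_\eps\to v$ in $L^2(Q_T)$. Thus $v$ satisfies, for all $\psi\in L^2(0,T;H^1(\Om))$,
\begin{align*}
\int_0^T\langle\pt v,\psi\rangle\dt+d\iint_{Q_T}\Grad v\cdot\Grad\psi\dx\dt=\iint_{Q_T}g(u,v)\,\psi\dx\dt.
\end{align*}

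Next I would set $w_\eps:=v_\eps-v\in L^2(0,T;H^1(\Om))$, note that $\pt w_\eps\in L^2(0,T;(H^1(\Om))')$ and $w_\eps(0)=0$, and subtract the two weak formulations. Using the linearity of $g$ this yields, for all $\psi\in L^2(0,T;H^1(\Om))$,
\begin{align*}
\int_0^T\langle\pt w_\eps,\psi\rangle\dt+d\iint_{Q_T}\Grad w_\eps\cdot\Grad\psi\dx\dt=\iint_{Q_T}\bigl(\alpha(u_\eps-u)-\beta\,w_\eps\bigr)\psi\dx\dt.
\end{align*}
Taking $\psi=w_\eps$ and invoking the integration-by-parts-in-time identity $\int_0^T\langle\pt w_\eps,w_\eps\rangle\dt=\tfrac12\norm{w_\eps(T)}_{L^2(\Om)}^2-\tfrac12\norm{w_\eps(0)}_{L^2(\Om)}^2$, valid for functions in the Gelfand triple $H^1(\Om)\subset L^2(\Om)\subset(H^1(\Om))'$, and then discarding the nonnegative terms $\tfrac12\norm{w_\eps(T)}_{L^2(\Om)}^2$ and $\beta\iint_{Q_T}w_\eps^2\dx\dt$, I obtain
\begin{align*}
d\iint_{Q_T}|\Grad w_\eps|^2\dx\dt\leq\alpha\iint_{Q_T}(u_\eps-u)\,w_\eps\dx\dt\leq\alpha\norm{u_\eps-u}_{L^2(Q_T)}\norm{w_\eps}_{L^2(Q_T)}.
\end{align*}
The right-hand side tends to $0$ because $u_\eps\to u$ strongly in $L^2(Q_T)$ while $\norm{w_\eps}_{L^2(Q_T)}\to0$, so $\Grad v_\eps\to\Grad v$ strongly in $L^2(Q_T)$, which is what was needed.

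The steps are all essentially standard; the only points deserving care are the justification of the integration-by-parts-in-time formula (which requires $w_\eps$ to lie in the right space, as it does) and the verification that the limit $v$ is an admissible weak solution, so that the subtraction is legitimate -- the latter being immediate here since $g$ is affine. An alternative route is to test each equation with its own solution and compare the resulting energy identities, using the weak lower semicontinuity of $v\mapsto\norm{v(T)}_{L^2(\Om)}^2$ to pass to the limit in the $t=T$ term; I prefer the difference argument above because it produces the strong convergence of the gradients directly, without having to control endpoint terms.
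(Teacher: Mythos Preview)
Your argument is correct and is precisely the standard energy estimate on the difference $w_\eps=v_\eps-v$ that one expects here; the paper itself does not spell out a proof but defers to \cite{bku}, where the same subtraction-and-test-with-$w_\eps$ computation is carried out. The only point you might make more explicit is that $v(0)=v_0$ (so that $w_\eps(0)=0$), which follows from the weak convergences $v_\eps\rightharpoonup v$ in $L^2(0,T;H^1(\Om))$ and $\pt v_\eps\rightharpoonup\pt v$ in $L^2(0,T;(H^1(\Om))')$ via the embedding into $C([0,T];L^2(\Om))$.
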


Next,  we identify  $\Gamma_1$ as  $|\Grad A(u)|^{p-2}\Grad A(u)$
 when passing  to
the limit $\eps \to 0$ in \eqref{bbru_form:ueps}.
 Due to this particular nonlinearity, we cannot employ the
 monotonicity argument used in \cite{bku}; rather,
 we will   utilize  a
Minty-type  argument \cite{minty} and make repeated use of the
following  ``weak chain rule''
(see e.g.\  \cite{bmp} for a proof).
\begin{lemma}\label{bbru_chain-rule}
Let $b:\R\to\R$ be Lipschitz continuous and nondecreasing. Assume
$u\in L^\infty(Q_T)$ is such that $\pt u\in
L^{p'}(0,T;(W^{1,p}(\Om))')$, $b(u)\in L^p(0,T;W^{1,p}(\Om))$,
$u(x,0) = u_0(x)$ a.e. on $\Omega$,
  with $u_0\in L^\infty(\Om)$. If we define
 $\smash{B(u)=\int_0^ub(\xi)d\xi}$,  then
\begin{align*}
-\int_0^s\left\langle \pt u,b(u)\phi\right\rangle \dt=
\int_0^s\int_{\Om}B(u)\pt\phi \dx\dt
 + \int_{\Om}B(u_0)\phi(x,0)\dx-\int_{\Om}B(u(x,s))\phi(x,s)\dx
\end{align*}
\end{lemma}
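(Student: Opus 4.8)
The plan is to prove the identity by time-regularization: one reduces it to the ordinary chain rule for the regularized function, passes to the limit, and uses the monotonicity of $b$ — equivalently the convexity of $B:=\int_0^{\cdot}b$ — to kill the defect of commutation between the regularization and the nonlinearity. By linearity in $\phi$ and a density argument it suffices to treat $\phi\in C^1(\overline{Q_T})$. Since $u\in L^\infty(Q_T)$ and $\pt u\in L^{p'}(0,T;(W^{1,p}(\Om))')$, after modification on a null set of times $u\in C([0,T];(W^{1,p}(\Om))')\cap C_w(0,T;L^2(\Om))$, so all the terms make sense; moreover $b$ Lipschitz together with $u$ bounded gives that $B\in C^1$ is Lipschitz on the essential range of $u$, whence $B(u),B(u_0)\in L^\infty$, and $t\mapsto\langle\pt u(t),b(u(t))\phi(t)\rangle\in L^1(0,T)$ by H\"older's inequality. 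It is enough to establish the identity for a.e.\ $s$, the value $s=0$ being recovered at the end from $u(0)=u_0$.

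Introduce the one-sided time average $u_h(\cdot,t):=\frac1h\int_t^{t+h}u(\cdot,\tau)\,d\tau$ (for $h<T-s_2$, with $s_2<T$ below, no extension of $u$ is needed on the intervals in play). Then $u_h$ is Lipschitz in $t$ with values in $L^2(\Om)$, takes values in the essential range of $u$, and, as $h\to0$: $u_h\to u$ a.e.\ and in every $L^q(Q_T)$; $u_h(\cdot,\sigma)\to u(\cdot,\sigma)$ strongly in $L^1(\Om)$ for a.e.\ $\sigma$; the derivative $\pt u_h(\cdot,t)=\tfrac1h\bigl(u(\cdot,t+h)-u(\cdot,t)\bigr)$ equals the corresponding average of $\pt u$ and converges to $\pt u$ strongly in $L^{p'}(0,T';(W^{1,p}(\Om))')$ for $T'<T$; and $(b(u))_h\to b(u)$ strongly in $L^p(0,T';W^{1,p}(\Om))$, because $b(u)\in L^p(0,T;W^{1,p}(\Om))$. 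Since $u_h(x,\cdot)$ is absolutely continuous and $B\in C^1$, the ordinary chain rule gives $\pt[B(u_h)]=b(u_h)\,\pt u_h$ a.e.; noting $\pt u_h(\cdot,t)\in L^\infty(\Om)$ (so $\langle\pt u_h,\cdot\rangle$ is a genuine integral), multiplying by $\phi$, integrating over $\Om\times(s_1,s_2)$ with $0<s_1<s_2<T$ chosen as Lebesgue points, and integrating by parts in $t$, we obtain
\[
-\int_{s_1}^{s_2}\int_\Om b(u_h)\,\pt u_h\,\phi\dx\dt=\int_{s_1}^{s_2}\int_\Om B(u_h)\,\pt\phi\dx\dt+\int_\Om B(u_h(\cdot,s_1))\phi(\cdot,s_1)\dx-\int_\Om B(u_h(\cdot,s_2))\phi(\cdot,s_2)\dx .
\]

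Letting $h\to0$, dominated convergence together with the strong $L^1(\Om)$ convergence of $u_h$ at $s_1,s_2$ disposes of the right-hand side, which tends to the right-hand side of the lemma with $s_1,s_2$ in place of $0,s$; on the left, write $b(u_h)\phi=(b(u))_h\phi+\bigl(b(u_h)-(b(u))_h\bigr)\phi$, so that the piece containing $(b(u))_h\phi$ pairs against $\pt u_h$ and, by the strong convergences above, tends to $-\int_{s_1}^{s_2}\langle\pt u,b(u)\phi\rangle\dt$. Thus everything reduces to showing that the ``commutator''
\[
R_h:=\int_{s_1}^{s_2}\bigl\langle\pt u_h,\,\bigl(b(u_h)-(b(u))_h\bigr)\phi\bigr\rangle\dt\longrightarrow 0,
\]
after which the identity holds on $(s_1,s_2)$, and sending $s_1\to0$ (using $u(0)=u_0$ and the convexity of $B$ to identify the boundary term with $\int_\Om B(u_0)\phi(\cdot,0)\dx$) and renaming $s_2=s$ yields the claim. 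The vanishing of $R_h$ is the heart of the matter and cannot be obtained from the crude estimates, since $\pt u_h$ is unbounded in $L^\infty$ as $h\to0$ while $b(u_h)$ need not be bounded in $L^p(0,T;W^{1,p}(\Om))$ — we control $\Grad b(u)$ but never $\Grad u$ — so the bracket does not converge in $L^p(0,T';W^{1,p}(\Om))$; this is exactly where the monotonicity of $b$ is indispensable. Writing $\pt u_h(\cdot,t)=\tfrac1h\bigl(u(\cdot,t+h)-u(\cdot,t)\bigr)$ and unwinding the averages by Fubini and a translation, $R_h$ is rewritten, up to lower-order errors, in terms of
\[
\tfrac1{h^2}\iint_{\{0<\tau'-\tau<h\}}\int_\Om\bigl(u(x,\tau')-u(x,\tau)\bigr)\bigl(b(u(x,\tau'))-b(u(x,\tau))\bigr)\phi\dx\,d\tau\,d\tau',
\]
whose integrand is nonnegative by monotonicity of $b$, and this quantity tends to $0$ owing to the joint time-regularity $\pt u\in L^{p'}(0,T;(W^{1,p}(\Om))')$ and $b(u)\in L^p(0,T;W^{1,p}(\Om))$ (the Alt--Luckhaus device). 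I expect the control of $R_h$ to be the only genuinely delicate point; all the rest is routine.
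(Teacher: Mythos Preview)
The paper does not prove this lemma: it is stated as a known ``weak chain rule'' and the reader is referred to \cite{bmp} for a proof. So there is no argument in the paper to compare against.

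Your strategy---Steklov-average in time, apply the classical chain rule to $B(u_h)$, integrate by parts, and pass to the limit---is the standard one and is essentially correct. You have also located the only genuine difficulty, the commutator $R_h$, and correctly identified that the monotonicity of $b$ (convexity of $B$) is what is needed. One point, however, is not quite right as written: the expression $\partial_t u_h\,(b(u_h)-(b(u))_h)$ involves $b$ evaluated at the \emph{time average} $u_h$, and ``unwinding the averages by Fubini and a translation'' does not literally produce the symmetric double integral in $(u(\tau')-u(\tau))(b(u(\tau'))-b(u(\tau)))$ that you wrote; the ``lower-order errors'' are hiding a nontrivial manipulation. A cleaner route, and the one actually used in the cited references, avoids the commutator altogether: from convexity of $B$ one has, pointwise,
\[
b\bigl(u(t)\bigr)\,\bigl(u(t+h)-u(t)\bigr)\ \le\ B\bigl(u(t+h)\bigr)-B\bigl(u(t)\bigr)\ \le\ b\bigl(u(t+h)\bigr)\,\bigl(u(t+h)-u(t)\bigr),
\]
so that (for $\phi\ge 0$) $\int\!\!\int \partial_t(B(u))_h\,\phi$ is squeezed between the pairings of $\partial_t u_h$ with $b(u(\cdot))\phi$ and $b(u(\cdot+h))\phi$. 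Both of these test functions \emph{do} lie in $L^p(0,T';W^{1,p}(\Om))$ and converge there to $b(u)\phi$, so both bounds tend to $\int_{s_1}^{s_2}\langle\partial_t u,b(u)\phi\rangle\,dt$, while the middle term integrates by parts to give the right-hand side. Splitting $\phi=\phi^+-\phi^-$ handles general $\phi$.
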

\noindent holds for all $\phi \in \D([0,T]\times\Om)$ and for any
$s\in (0,T)$.

\begin{lemma}
There hold
 $\Gamma_1=|\Grad A(u)|^{p-2}\Grad A(u)$ and
 $\Grad A_\eps(u_\eps) \to \Grad A(u)$  strongly in
  $L^p(Q_T)$.
\end{lemma}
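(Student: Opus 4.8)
The plan is to identify $\Gamma_1$ through a Minty-type monotonicity argument for the monotone operator $\xi\mapsto|\xi|^{p-2}\xi$ on $\R^N$, and then to bootstrap the weak convergence of the gradients to a strong one by exploiting the uniform convexity of $L^p$. First I would produce an energy identity for the approximate solutions: taking $\varphi=A_\eps(u_\eps)$ in \eqref{bbru_form:ueps}, noting that $|\Grad A_\eps(u_\eps)|^{p-2}\Grad A_\eps(u_\eps)\cdot\Grad A_\eps(u_\eps)=|\Grad A_\eps(u_\eps)|^p$, and invoking the weak chain rule (Lemma~\ref{bbru_chain-rule}) with $b=A_\eps$ and a test function equal to one on $\Om$, one gets
\[
\iint_{Q_T}|\Grad A_\eps(u_\eps)|^p\dx\dt=\int_\Om \A_\eps(u_0)\dx-\int_\Om \A_\eps\bigl(u_\eps(x,T)\bigr)\dx+\iint_{Q_T}\chi u_\eps f(u_\eps)\,\Grad v_\eps\cdot\Grad A_\eps(u_\eps)\dx\dt .
\]
Then I would pass to the $\limsup$ as $\eps\to0$. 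The first term on the right tends to $\int_\Om\A(u_0)\dx$; for the second, $\A$ is convex (its derivative $A$ is nondecreasing), $\A_\eps\ge\A$, and $u_\eps(\cdot,T)\rightharpoonup u(\cdot,T)$ weakly in $L^2(\Om)$ --- a consequence of the uniform bounds on $u_\eps$ in $L^\infty(0,T;L^2(\Om))$ and on $\pt u_\eps$ in $L^{p'}(0,T;(W^{1,p}(\Om))')$ --- so weak lower semicontinuity yields $\liminf_\eps\int_\Om\A_\eps(u_\eps(x,T))\dx\ge\int_\Om\A(u(x,T))\dx$; for the last term, $u_\eps f(u_\eps)\to uf(u)$ strongly in every $L^q(Q_T)$ and $\Grad v_\eps\to\Grad v$ strongly in $L^2(Q_T)$ by Lemma~\ref{bbru_str-con-grad-v}, hence $\chi u_\eps f(u_\eps)\Grad v_\eps\to\chi uf(u)\Grad v$ strongly in $L^{p'}(Q_T)$ (recall $p'\le2$), while $\Grad A_\eps(u_\eps)\rightharpoonup\Grad A(u)$ weakly in $L^p(Q_T)$. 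Collecting these facts,
\[
\limsup_{\eps\to0}\iint_{Q_T}|\Grad A_\eps(u_\eps)|^p\dx\dt\le\int_\Om\A(u_0)\dx-\int_\Om\A(u(x,T))\dx+\iint_{Q_T}\chi uf(u)\,\Grad v\cdot\Grad A(u)\dx\dt .
\]

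Next I would pass to the limit $\eps\to0$ in \eqref{bbru_form:ueps}, using \eqref{bbru_pa} and Lemma~\ref{bbru_str-con-grad-v}, to obtain that $(u,v)$ satisfies $\int_0^T\langle\pt u,\varphi\rangle\dt+\iint_{Q_T}\bigl(\Gamma_1-\chi uf(u)\Grad v\bigr)\cdot\Grad\varphi\dx\dt=0$ for every $\varphi\in L^p(0,T;W^{1,p}(\Om))$. Choosing $\varphi=A(u)$, which is admissible since $A(u)\in L^p(0,T;W^{1,p}(\Om))$, and applying the weak chain rule once more gives $\iint_{Q_T}\Gamma_1\cdot\Grad A(u)\dx\dt=\int_\Om\A(u_0)\dx-\int_\Om\A(u(x,T))\dx+\iint_{Q_T}\chi uf(u)\,\Grad v\cdot\Grad A(u)\dx\dt$. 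Comparing this with the previous inequality produces the key estimate
\[
\limsup_{\eps\to0}\iint_{Q_T}|\Grad A_\eps(u_\eps)|^p\dx\dt\le\iint_{Q_T}\Gamma_1\cdot\Grad A(u)\dx\dt .
\]

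The identification of $\Gamma_1$ then follows by Minty's device. For an arbitrary $w\in L^p(0,T;W^{1,p}(\Om))$, monotonicity gives $\iint_{Q_T}\bigl(|\Grad A_\eps(u_\eps)|^{p-2}\Grad A_\eps(u_\eps)-|\Grad w|^{p-2}\Grad w\bigr)\cdot\bigl(\Grad A_\eps(u_\eps)-\Grad w\bigr)\dx\dt\ge0$; expanding, using the key estimate on the term $\iint_{Q_T}|\Grad A_\eps(u_\eps)|^p$ and the weak convergences of \eqref{bbru_pa} on the three remaining ones, I get $\iint_{Q_T}\bigl(\Gamma_1-|\Grad w|^{p-2}\Grad w\bigr)\cdot\bigl(\Grad A(u)-\Grad w\bigr)\dx\dt\ge0$ for all such $w$. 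Taking $w=A(u)-\lambda\zeta$ with $\zeta\in L^p(0,T;W^{1,p}(\Om))$ and $\lambda>0$, dividing by $\lambda$, letting $\lambda\downarrow0$ and finally replacing $\zeta$ by $-\zeta$, I conclude $\Gamma_1=|\Grad A(u)|^{p-2}\Grad A(u)$ a.e.\ in $Q_T$. Substituting back, the key estimate becomes $\limsup_\eps\iint_{Q_T}|\Grad A_\eps(u_\eps)|^p\dx\dt\le\iint_{Q_T}|\Grad A(u)|^p\dx\dt$, and together with weak lower semicontinuity of the norm this forces $\norm{\Grad A_\eps(u_\eps)}_{L^p(Q_T)}\to\norm{\Grad A(u)}_{L^p(Q_T)}$; since $L^p(Q_T)$ is uniformly convex for $1<p<\infty$, weak convergence plus convergence of norms gives $\Grad A_\eps(u_\eps)\to\Grad A(u)$ strongly in $L^p(Q_T)$. (Alternatively, one reaches the same conclusion directly from the elementary inequality $(|\xi|^{p-2}\xi-|\eta|^{p-2}\eta)\cdot(\xi-\eta)\ge c_p\,|\xi-\eta|^p$, with $c_p>0$, valid for $p\ge2$.)

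I expect the main obstacle to be the treatment of the time-slice terms $\int_\Om\A_\eps(u_\eps(x,T))\dx$: one must make sense of the traces at $t=T$ and pass to the limit with the right direction of the inequality. This is exactly what the convexity of $\A$, the bound $\A_\eps\ge\A$, and the weak convergence $u_\eps(\cdot,T)\rightharpoonup u(\cdot,T)$ in $L^2(\Om)$ (obtained from the uniform bounds \eqref{bbru_est1-u-weak}--\eqref{bbru_est2-u-weak}) accomplish, while on the limit side the matching equality is delivered by the weak chain rule of Lemma~\ref{bbru_chain-rule}; a minor accompanying point is the justification of that chain rule for a time-independent (in particular constant) test function, which holds by a routine density argument.
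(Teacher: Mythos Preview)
Your argument is correct and follows the standard Minty route, but the paper takes a slightly different path that sidesteps precisely the obstacle you flag at the end. Instead of working on $Q_T$ and confronting the time-slice term $\int_\Om\A_\eps(u_\eps(x,T))\dx$, the paper integrates the whole argument once more in time, working on $\mathcal{Q}_T:=\{(t,s,x):(x,s)\in Q_t,\ t\in[0,T]\}$ and establishing
\[
\iiint_{\mathcal{Q}_T}\bigl(\Gamma_1-|\Grad\sigma|^{p-2}\Grad\sigma\bigr)\cdot\bigl(\Grad A(u)-\Grad\sigma\bigr)\dx\ds\dt\ge0.
\]
The point of this extra integration is that, after the chain rule, the slice term at a fixed time becomes the space-time integral $\iint_{Q_T}\A_\eps(u_\eps)\dx\dt$, which converges \emph{exactly} (not merely by lower semicontinuity) to $\iint_{Q_T}\A(u)\dx\dt$ thanks to the strong convergence $u_\eps\to u$ in $L^q(Q_T)$. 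Consequently the analogue of your key estimate is obtained as an equality ($I_1\to0$), with no need to justify the weak convergence of the traces $u_\eps(\cdot,T)$ or to invoke the convexity of~$\A$. Your route is more direct and perfectly legitimate, but it hinges on the trace argument you sketch; the paper's device trades that for an innocuous extra time integration. For the strong convergence of the gradients, both approaches ultimately rely on the same elementary inequality $(|\xi|^{p-2}\xi-|\eta|^{p-2}\eta)\cdot(\xi-\eta)\ge c_p|\xi-\eta|^p$: the paper reads it off from $I_2\to0$ with $\sigma=A(u)$, while you obtain it via norm convergence and uniform convexity.
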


\begin{proof} We define
 $\mathcal{Q}_T:= \{ (t,s,x) \, : \,  (x, s) \in Q_t, \; t\in [0, T]\}$.
The first step will be to show that
\begin{align}\label{bbru_liproof-1}
\iiint_{\mathcal{Q}_T}\bigl(\Gamma_1-|\Grad \sigma|^{p-2}\Grad
\sigma \bigr) \cdot\bigl(\Grad A(u)-\Grad\sigma\bigr) \dx\ds\dt \geq
0, \quad \forall \sigma\in L^p(0,T;W^{1,p}(\Om)).
\end{align}
For all fixed $\eps>0$, we have the  decomposition
\begin{align*}
& \iiint_{\mathcal{Q}_T}\bigl(|\Grad A_\eps(u_\eps)|^{p-2}\Grad
A_\eps(u_\eps)-|\Grad \sigma|^{p-2}\Grad \sigma\bigr)
\cdot\bigl(\Grad A(u)-\Grad\sigma\bigr) \dx\ds\dt=I_1+I_2+I_3,\\
&I_1:=   \iiint_{\mathcal{Q}_T}      |\Grad A_\eps(u_\eps)|^{p-2}\Grad
A_\eps(u_\eps)\cdot\bigl(\Grad A(u)-\Grad A_\eps(u_\eps)\bigr) \dx\ds\dt,\\
&I_2:=   \iiint_{\mathcal{Q}_T}      \bigl(|\Grad A_\eps(u_\eps)|^{p-2}\Grad
A_\eps(u_\eps)-|\Grad \sigma|^{p-2}\Grad \sigma\bigr)
\cdot\bigl(\Grad A_\eps(u_\eps)-\Grad\sigma\bigr) \dx\ds\dt,\\
&I_3 :=   \iiint_{\mathcal{Q}_T}         |\Grad \sigma|^{p-2}\Grad \sigma\cdot
\bigl(\Grad A_\eps(u_\eps)-\Grad A(u)\bigr) \dx\ds\dt.
\end{align*}
Clearly,  $I_2\geq 0$ and from \eqref{bbru_pa} we   deduce that
$I_3 \to 0$ as $\varepsilon \to 0$. For $I_1$, if we
multiply  \eqref{bbru_eq2.1a} by
$\phi\in L^p(0,T;W^{1,p}(\Om))$ and integrate over
$\mathcal{Q}_T$,  
 we obtain
\begin{align*}
\int_0^T\int_0^t \langle \pt u_\eps,\phi \rangle \ds\dt
   -\iiint_{\mathcal{Q}_T} \chi u_\eps f(u_\eps)\Grad
   v_\eps\cdot\Grad \phi \dx\ds\dt
 + \iiint_{\mathcal{Q}_T}  |\Grad A_\eps(u_\eps)|^{p-2}\Grad
   A_\eps(u_\eps)\cdot\Grad \phi \dx\ds\dt=0.
\end{align*}
Now, if we take $\phi=A(u)-A_\eps(u_\eps)\in L^p(0,T;W^{1,p}(\Om))$ and
use Lemma \ref{bbru_chain-rule}, we obtain
\begin{align*}
I_1=&-\int_0^T\int_0^t\left\langle \pt u_\eps,A(u)\right\rangle \ds\dt+
 \int_0^T\int_0^t\left\langle \pt u_\eps,A_\eps(u_\eps)\right\rangle
\, ds \, dt
\\
& + \iiint_{\mathcal{Q}_T} \chi u_\eps f(u_\eps)
 \Grad v_\eps\cdot\bigl(\Grad A(u)-\Grad A_\eps(u_\eps) \bigr)
 \dx\ds\dt  \\
=&-\int_0^T\int_0^t\left\langle \pt u_\eps,A(u)\right\rangle \ds\dt+
  \iint_{Q_T}\A_\eps(u_\eps) \dx\dt-T\int_{\Om}
  \A_\eps(u_0) \dx\\
 & + \iiint_{\mathcal{Q}_T}\chi u_\eps f(u_\eps)
 \Grad v_\eps\cdot\bigl(\Grad A(u)-\Grad A_\eps(u_\eps) \bigr) \dx\ds\dt.
\end{align*}
Therefore, using \eqref{bbru_pa} and Lemma \ref{bbru_str-con-grad-v} 
 and defining $\smash{\mathcal{A} (u) := \int_0^u A(s) \, ds}$, 
we
conclude  that
\begin{equation*}
\lim_{\eps\to 0} I_1=-\int_0^T\int_0^t\left\langle \pt u,A(u)\right\rangle
\ds\dt + \int_0^T\int_{\Om}\A(u(x,t)) \dx\dt -T\int_{\Om}
  \A(u_0(x)) \dx,
\end{equation*}
and from Lemma \ref{bbru_chain-rule}, this yields $I_1\to0$ as $\varepsilon
\to 0$. Consequently,  we have shown that
\begin{align*}
\lim_{\eps\to 0}\iiint_{\mathcal{Q}_T} \bigl(|\Grad A_\eps(u_\eps)|^{p-2}\Grad
A_\eps(u_\eps)-|\Grad \sigma|^{p-2}\Grad \sigma\bigr)
\cdot\bigl(\Grad A(u)-\Grad \sigma\bigr) \dx\ds\dt \geq 0,
\end{align*}
which proves \eqref{bbru_liproof-1}. Choosing
$\sigma=A(u)-\lambda\xi$  with $\lambda\in \R$ and $\xi$
$\in L^p(0,T;W^{1,p}(\Om))$ and  combining  the two inequalities
 arising
from  $\lambda>0$ and $\lambda<0$, we  obtain
 the first assertion of the lemma.
  The second assertion
  directly follows from
\eqref{bbru_liproof-1}.
\end{proof}

With the above convergences we are now able to pass to the limit
$\varepsilon \to 0$, and we can identify the limit
$(u,v)$ as a (weak) solution of \eqref{bbru_S1-S2-S3}. In fact, if  $\varphi \in
L^p(0,T;W^{1,p}(\Om))$ is a test function for \eqref{bbru_form:ueps},
then by
\eqref{bbru_pa} it is now clear  that
\begin{gather*}
\int_0^T \left \langle \pt u_\eps,\varphi \right \rangle \dt
\to \int_0^T \left \langle \pt u,\varphi\right \rangle \dt
\quad \text{as $\eps \to 0$,} \\
\iint_{Q_T} |\Grad A_\eps(u_\eps)|^{p-2}\Grad A_\eps(u_\eps)
\cdot\Grad \varphi \dx \dt
\to \iint_{Q_T}|\Grad A(u)|^{p-2}\Grad A(u)\cdot \Grad
\varphi \dx \dt\quad \text{as $\eps \to 0$.}
\end{gather*}
Since $h(u_\eps)=u_\eps f(u_\eps)$ is bounded in $L^\infty(Q_T)$
 and by Lemma \ref{bbru_str-con-grad-v}, $v_\eps \to
v$ in $L^2(0,T;H^{1}(\Om))$, it follows that
\begin{align*}
\iint_{Q_T} \chi u_\eps f(u_\eps)
\Grad v_\eps \cdot \Grad \varphi \dx \dt
\to \iint_{Q_T} \chi u f(u) \Grad v
\cdot \Grad \varphi \dx \dt \quad \text{as $\eps \to 0$.}
\end{align*}
We have thus identified $u$ as the first component of a solution
of \eqref{bbru_S1-S2-S3}. Using a similar argument, we can identify
$v$ as the second component of a solution.

\section{H\"older continuity of weak solutions}
\label{bbru_Sect:holdercont}

\subsection{Preliminaries}\label{bbru_subsec:holder-prelim}
We start by recasting Definition~\ref{bbru_def1} in a form that
involves the Steklov average, defined for a function $w\in L^1(Q_T)$
and $0<h<T$  by
\begin{align*}
w_h   :=\begin{cases}
{\displaystyle \frac{1}{h}\int_t^{t+h}w(\cdot,\tau)\,d\tau}  & \text{if $t\in(0,T-h]$,}\\
0&\text{if $t\in (T-h,T]$.}
\end{cases}
\end{align*}

\begin{definition}
A local weak solution for \eqref{bbru_S1-S2-S3} is a measurable
function $u$ such that, for every compact $K\subset \Om$ and for all
$0<t<T-h$,
\begin{align}\label{bbru_weak-steklov}
\int_{K\times\{t\}}\Bigl\{\pt(u_h)\varphi+ \bigl(|\Grad
A(u)|^{p-2}\Grad A(u)\bigr)_h\cdot \Grad\varphi-\bigl(\chi
uf(u)\Grad v\bigr)_h\cdot\Grad\varphi\Bigr\}\dx=0,   \quad \forall
\varphi\in W_0^{1,p}(K).
\end{align}
\end{definition}

The following technical lemma on the geometric convergence of
sequences (see e.g., \cite[Lemma~4.2, Ch.~I]{dib1}) will be used later.
\begin{lemma}\label{bbru_geometric}
Let $\{X_n\}$ and $\{Z_n\}$, $n \in \mathbb{N}_0$,  be sequences of positive real numbers satisfying
\begin{align*}
X_{n+1}\leq Cb^n\bigl(X_n^{1+\alpha}+X_n^\alpha Z_n^{1+\kappa}\bigr),
\quad
Z_{n+1}\leq Cb^n \bigl(X_n+Z_n^{1+\kappa}\bigr),
\end{align*}
where $C>1$, $b>1$, $\alpha>0$ and $\kappa>0$ are given constants.
 Then $X_n,Z_n\to 0$ as $n\to \infty$ provided that
\begin{align*}
X_0+Z_0^{1+\kappa}\leq(2C)^{-(1+\kappa)/\sigma}b^{-(1+\kappa)/
  \sigma^2} ,
\quad \text{\em with $\sigma=\min\{\alpha,\kappa\}$.}
\end{align*}
\end{lemma}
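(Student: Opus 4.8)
The plan is to deduce the convergence from a quantitative geometric decay estimate proved by induction. Set $\sigma=\min\{\alpha,\kappa\}$, fix the exponent $\lambda:=(1+\kappa)/\sigma$, and abbreviate $\rho:=X_0+Z_0^{1+\kappa}$, so that the hypothesis reads $\rho\le(2C)^{-(1+\kappa)/\sigma}b^{-(1+\kappa)/\sigma^2}=(2C)^{-\lambda}b^{-\lambda^2/(1+\kappa)}$. Using $\alpha\ge\sigma$, $\kappa\ge\sigma$ and $1+\kappa>\sigma$, this bound implies in particular the two smallness conditions
\[
2C\rho^{\alpha}b^{\lambda}\le 1 \qquad\text{and}\qquad (2C)^{1+\kappa}\rho^{\kappa}b^{\lambda}\le 1 .
\]
I claim that $X_n\le\rho\,b^{-n\lambda}$ and $Z_n^{1+\kappa}\le\rho\,b^{-n\lambda}$ for every $n\ge 0$; granting this, letting $n\to\infty$ gives $X_n,Z_n\to0$, indeed geometrically.

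For the induction, the case $n=0$ is immediate from the definition of $\rho$. Assume the two bounds hold at step $n$. For $X_{n+1}$, write $X_n^{1+\alpha}+X_n^{\alpha}Z_n^{1+\kappa}=X_n^{\alpha}\bigl(X_n+Z_n^{1+\kappa}\bigr)$ and use the inductive hypothesis twice to get
\[
X_{n+1}\le Cb^{n}\bigl(\rho\,b^{-n\lambda}\bigr)^{\alpha}\cdot 2\rho\,b^{-n\lambda}=2C\rho^{1+\alpha}\,b^{\,n(1-\lambda\alpha-\lambda)} .
\]
Multiplying the target inequality $X_{n+1}\le\rho\,b^{-(n+1)\lambda}$ through by $b^{(n+1)\lambda}/\rho$, it suffices to check $2C\rho^{\alpha}b^{\,n(1-\lambda\alpha)+\lambda}\le 1$; since $\lambda\alpha=(1+\kappa)\alpha/\sigma\ge 1+\kappa\ge1$, the exponent is non-increasing in $n$, so its maximum is attained at $n=0$ and the required inequality is exactly $2C\rho^{\alpha}b^{\lambda}\le1$. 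For $Z_{n+1}$, the inductive hypothesis gives $Z_{n+1}\le Cb^{n}\cdot2\rho\,b^{-n\lambda}$, hence
\[
Z_{n+1}^{1+\kappa}\le (2C\rho)^{1+\kappa}\,b^{\,n(1-\lambda)(1+\kappa)} ;
\]
the rescaled exponent has $n$-coefficient $1+\kappa-\lambda\kappa\le0$ because $\lambda\kappa=(1+\kappa)\kappa/\sigma\ge1+\kappa$, so again the worst case is $n=0$ and $Z_{n+1}^{1+\kappa}\le\rho\,b^{-(n+1)\lambda}$ follows from $(2C)^{1+\kappa}\rho^{\kappa}b^{\lambda}\le1$. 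This closes the induction.

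There is no genuine difficulty; the only delicate point is the bookkeeping behind the choice of $\lambda$. It must be large enough to absorb the factors $b^{n}$ carried by both recursions after passing to $(1+\kappa)$-powers — which forces $\lambda\alpha\ge1$ and $\lambda\kappa\ge1+\kappa$ — yet the two resulting smallness requirements on $\rho$ must both be consequences of the single stated hypothesis; the value $\lambda=(1+\kappa)/\sigma$ is precisely the one that makes these demands compatible. Alternatively, one may set $Y_n:=X_n+Z_n^{1+\kappa}$, derive (once $Y_0\le1$) a single recursion $Y_{n+1}\le\tilde C b^{(1+\kappa)n}Y_n^{1+\sigma}$ from the two inequalities, and invoke the classical one-sequence fast geometric convergence lemma; I prefer the direct induction above, as it produces the explicit decay rate $b^{-n\lambda}$ with no loss in the constant.
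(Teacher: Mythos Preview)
Your proof is correct. The paper does not actually prove this lemma; it only cites it as \cite[Lemma~4.2, Ch.~I]{dib1} (DiBenedetto), so there is no in-paper argument to compare against. Your direct induction with the explicit rate $b^{-n\lambda}$, $\lambda=(1+\kappa)/\sigma$, is the standard way to establish such fast-geometric-convergence results, and the verification that the single hypothesis $\rho\le(2C)^{-\lambda}b^{-\lambda/\sigma}$ implies both $2C\rho^{\alpha}b^{\lambda}\le 1$ and $(2C)^{1+\kappa}\rho^{\kappa}b^{\lambda}\le 1$ is clean and accurate.
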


\subsection{The rescaled cylinders}
Let $B_\rho(x_0)$ denote the ball of  radius $\rho$ centered at
$x_0$. Then,
for a point $(x_0,t_0)\in \R^{n+1}$, we denote the
cylinder of radius $\rho$ and height $\tau$ by
\begin{align*}
(x_0,t_0)+Q(\tau,\rho):=B_\rho(x_0)\times(t_0-\tau,t_0).
\end{align*}

Intrinsic scaling is based on measuring  the oscillation of a
solution in a family of nested and shrinking cylinders whose
dimensions are  related to the degeneracy of the underlying PDE. To
implement this, we fix $(x_0,t_0)\in Q_T$;  after a translation, we
may assume that $(x_0,t_0)=(0,0)$. Then let $\eps>0$ and let $R>0$
be small enough so that $Q(R^{p-\eps},2R) \subset Q_T$, and define
\begin{align*}
\mu^+:=\esssup_{Q(R^{p-\eps},2R)}u,\qquad  \mu^-:=\essinf_{Q(R^{p-\eps},2R)}u,\qquad
\omega:=\essosc_{Q(R^{p-\eps},2R)}u\equiv\mu^+-\mu^-.
\end{align*}
Now construct the cylinder $Q(a_0R^p,R)$, where
\begin{align*}
a_0=\left(\frac{\omega}{2}\right)^{2-p}\frac{1}{\phi(\omega/2^m)^{p-1}},
\end{align*}
with $m$ to be chosen later. To ensure that
  $Q(a_0R^p,R)\subset Q(R^{p-\eps},2R)$, we
assume that
\begin{equation}\label{bbru_R-e}
\frac{1}{a_0}=\left(\frac{\omega}{2}\right)^{p-2}
\phi\left(\frac{\omega}{2^m}\right)^{p-1}>R^\eps,
\end{equation}
and therefore the relation
\begin{equation}\label{bbru_w--1}
\essosc_{Q(a_0R^{p},R)}u\leq \omega
\end{equation}
holds. Otherwise, the result is trivial as the oscillation is
comparable to the radius. We mention that for $\omega$ small and for
$m> 1$, the cylinder $Q(a_0R^{p},R)$ is long enough in the
$t-$direction, so that we can \textit{accommodate} the degeneracies
of the problem. Without loss of generality, we will assume
$\omega<\delta<1/2$.

Consider now, inside $Q(a_0R^{p},R)$, smaller subcylinders of the form
$$Q_R^{t^*}\equiv(0,t^*)+Q(dR^{p},R),\qquad d=\left(\frac{\omega}{2}\right)^{2-p}
\frac{1}{[\psi(\omega/4)]^{p-1}}, \qquad t^*<0.$$
These are contained in $Q(a_0R^{p},R)$ if  $a_0R^p\geq-t^*+dR^p$, which holds
whenever $\phi(\omega/2^m)\leq\psi(\omega/4)$ and
$$t^*\in\left(\frac{(\omega/2)^{2-p}R^p}{\psi(\omega/4)^{p-1}}-
\frac{(\omega/2)^{p-2}R^p}{\phi(\omega/2^m)^{p-1}},0\right).$$

These particular definitions of $a_0$ and of $d$ turn out to be the
 natural extensions  to the case $p>2$ of their counterparts in
\cite{Urb}. Notice that for $p=2$ and $a(u)\equiv 1$, we recover the
standard parabolic cylinders.

The structure of the proof will be based on the analysis of the
following  alternative: either  there is a cylinder $Q_R^{t^*}$
where $u$ is essentially away from its infimum, or such a cylinder
can not be found and thus $u$ is essentially away from its supremum
in all cylinders of that type. Both cases lead to the conclusion
that the essential oscillation of $u$ within a smaller cylinder
decreases by a factor that can be quantified,  and which does {\em
not} depend on $\omega$.

\begin{rem}\label{bbru_rem:pv}
(See \cite[Remark 4.2]{Porz-Vesp}) Let us introduce quantities of
the type $B_iR^{\theta}\omega^{-b_i}$, where $B_i$ and $b_i>0$ are
constants that can be determined a priori from the data,
independently of $\omega$ and $R$, and $\theta$ depending only on
$N$ and $p$. We assume without loss of generality, that
$$B_iR^{\theta}\omega^{-b_i}\leq 1.$$ If this was not valid,
then we would have $\omega\leq CR^{\eps}$ for the choices
$C=\max_{i}B_i^{1/b}$ and $\eps=\theta/\min_ib_i$, and the
result would be trivial.
\end{rem}

\subsection{The first alternative}
\begin{lemma}\label{bbru_1st-alt-lemma}
There exists $\nu_0\in(0,1)$, independent of $\omega$ and $R$, such
that if
\begin{align}\label{bbru_1st-alt-relation}
\bigl|\bigl\{(x,t)\in Q_R^{t^*} \, : \,  u(x,t)>1-\omega/2 \bigr\}
\bigr|\leq\nu_0\bigl|Q_R^{t^*}\bigr|
\end{align}
for some cylinder
of the type $\smash{Q_R^{t^*}}$,
then $u(x,t)<1-\omega / 4 $ a.e. in $\smash{Q_{R/2}^{t^*}}$.
\end{lemma}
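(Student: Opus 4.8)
The proof of Lemma~\ref{bbru_1st-alt-lemma} will follow the classical De~Giorgi iteration scheme, adapted to the intrinsic geometry dictated by the choice of $d$ and the structure of $\psi$. The plan is to work inside the fixed cylinder $Q_R^{t^*}$, which by construction is the ``slow'' geometry adapted to the degeneracy of $a(u)$ near $u=1$, and to show that if the super-level set $\{u>1-\omega/2\}$ occupies only a small fraction $\nu_0$ of $Q_R^{t^*}$, then a sequence of shrinking cylinders forces $u$ below $1-\omega/4$ on the half-size cylinder. Since $u$ is close to~$1$ on the relevant set and $a(u)\asymp\gamma_i\psi(1-u)$ there by~\eqref{bbru_cond-a(u)}, the diffusion operator $\Div(|\Grad A(u)|^{p-2}\Grad A(u))$ essentially behaves, on this set, like a $p$-Laplacian acting on $\psi(1-u)$ up to the constant $[\psi(\omega/4)]^{p-1}$ that has been factored into $d$; this is precisely what restores the homogeneity of the energy estimates.

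First I would set up the nested cylinders: for $n=0,1,2,\dots$ let $\rho_n=\tfrac{R}{2}+\tfrac{R}{2^{n+1}}$, let $Q_n=(0,t^*)+Q(d\rho_n^p,\rho_n)$, and introduce the increasing levels $k_n=1-\tfrac{\omega}{4}-\tfrac{\omega}{2^{n+2}}$, so that $k_n\uparrow 1-\omega/4$ and $k_0=1-\omega/2$. Let $(u-k_n)_+$ be the truncations and choose smooth cutoff functions $\zeta_n$ vanishing on the parabolic boundary of $Q_n$, equal to~$1$ on $Q_{n+1}$, with $|\Grad\zeta_n|\le 2^{n}/R$ and $0\le\partial_t\zeta_n\le 2^{pn}/(dR^p)$. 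Testing the Steklov-averaged weak formulation~\eqref{bbru_weak-steklov} with $\varphi=(u-k_n)_+\zeta_n^p$, passing $h\to 0$ via Lemma~\ref{bbru_chain-rule}, and handling the chemotactic term $\chi uf(u)\Grad v$ with Young's inequality (here one uses $f\in C^1[0,1]$ and the $L^\infty(Q_T)$ bound on $u$, together with the $L^r$ control on $\Grad v$ for large $r$ from Definition~\ref{bbru_def1}), yields the energy estimate
\begin{align*}
\esssup_{t}\int_{B_{\rho_n}}(u-k_n)_+^2\zeta_n^p\dx+\iint_{Q_n}|\Grad A(u)|^{p-2}|\Grad(u-k_n)_+|^2\zeta_n^p\dx\dt\leq\gamma\,\frac{2^{pn}}{dR^p}\iint_{Q_n}(u-k_n)_+^p\dx\dt,
\end{align*}
up to absorbing lower-order terms, where the crucial point is that on $\{u>k_n\}$ one has $1-u<\omega/4$, hence $\psi(1-u)\le\psi(\omega/4)$, which combined with the definition of $d$ makes the right-hand side homogeneous in $R$ with no residual power of $\omega$. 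Next I would convert this into an estimate for the functions $w_n:=A(1-k_n)-A(u)$ restricted to where it is positive; using $|\Grad A(u)|^{p-2}\ge c\,[\psi(\omega/4)]^{p-2}$ on the relevant range (again by~\eqref{bbru_cond-a(u)} and monotonicity of $\psi$), the left-hand side controls a genuine $W^{1,p}$ Dirichlet energy of a rescaled truncation, so a parabolic Sobolev embedding and a standard interpolation give
\begin{align*}
A_{n+1}:=\bigl|\{u>k_{n+1}\}\cap Q_{n+1}\bigr|\leq\gamma\,b^n\,A_n^{1+\tfrac{1}{N+p}}
\end{align*}
for a constant $b>1$ depending only on $N$ and $p$; here the factored-out powers of $\omega$ coming from $d$ and from the truncation sizes $\omega/2^{n+2}$ cancel exactly, leaving the iteration $\omega$-independent. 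Then the fast-geometric-convergence lemma (Lemma~\ref{bbru_geometric}, in its one-sequence form, or the classical \cite[Lemma~4.1, Ch.~I]{dib1}) shows $A_n\to 0$ provided $A_0\le\nu_0|Q_R^{t^*}|$ with $\nu_0$ small enough depending only on $\gamma$, $b$, $N$, $p$; since $Q_{n}\to Q_{R/2}^{t^*}$ and $k_n\to 1-\omega/4$, this gives $u\le 1-\omega/4$ a.e.\ in $Q_{R/2}^{t^*}$, as claimed.

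The main obstacle, and the place where care is genuinely needed rather than routine, is the bookkeeping of the powers of $\omega$ in the energy and iterative inequalities so that they cancel and $\nu_0$ comes out independent of $\omega$: one must match the factor $(\omega/2)^{2-p}[\psi(\omega/4)]^{-(p-1)}$ hidden in $d$ against the $\omega$-powers produced by the truncation levels $(u-k_n)_+\le\omega$, by the lower bound $|\Grad A(u)|^{p-2}\gtrsim[\psi(\omega/4)]^{p-2}$, and by the change of variables from $u$ to $A(u)$; the two-sided hypothesis~\eqref{bbru_cond-a(u)} with $\psi(s)=s^{\beta_2/(p-1)}$ is exactly what makes this balance work, and it is also why the cylinder $Q_R^{t^*}$ uses $\psi(\omega/4)$ rather than $\phi(\omega/2^m)$. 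A secondary technical point is the treatment of the chemotaxis term: because only $\Grad v\in L^r$ (not $L^\infty$) is available, the Young-inequality splitting must leave a term of the form $\gamma\|\Grad v\|_{r}^{p'}|\{u>k_n\}\cap Q_n|^{1-p'/r}$, which is lower order in the measure $A_n$ and is absorbed into the iteration by choosing $r$ large and, if necessary, invoking Remark~\ref{bbru_rem:pv} to discard the case where $\omega\lesssim R^\eps$.
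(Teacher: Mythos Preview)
Your plan captures the De~Giorgi iteration and the role of the intrinsic scaling factor $d$, but there is a genuine gap in the diffusion estimate. You test with $(u-k_n)_+\zeta_n^p$; on the set $\{u>k_n\}$ the value of $u$ is not bounded away from~$1$, so $a(u)$ has \emph{no} positive lower bound there (it degenerates as $u\to 1$). Consequently the good term
\[
\iint a(u)^{p-1}\bigl|\Grad(u-k_n)_+\bigr|^p\zeta_n^p
\]
cannot be bounded below by $c\,[\psi(\omega/4)]^{p-1}\iint|\Grad(u-k_n)_+|^p\zeta_n^p$, and your energy inequality is not coercive. (Your sentence ``on $\{u>k_n\}$ one has $1-u<\omega/4$, hence $\psi(1-u)\le\psi(\omega/4)$'' is doubly off: first, $1-u$ can be as large as $\omega/4+\omega/2^{n+2}$; second, what you need is a \emph{lower} bound on $a(u)$, not an upper bound.) The paper fixes this by working with the truncated function $u_\omega:=\min\{u,1-\omega/4\}$ and testing with $[(u_\omega)_h-k_n]^+\xi_n^p$: then $\Grad(u_\omega-k_n)^+$ is supported on $\{k_n<u<1-\omega/4\}$, where $1-u>\omega/4$ and hence $a(u)\ge\gamma_1\psi(\omega/4)$ by~\eqref{bbru_cond-a(u)}. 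The price is that the time-derivative term acquires an extra piece coming from $\{u>1-\omega/4\}$, which must be handled separately; this is the content of the $F_1$ calculation in the paper.

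A second point you gloss over is the chemotaxis term. Since only $\Grad v\in L^r$ for finite~$r$ is available, H\"older's inequality leaves a factor $A_n^{1-1/p}$ (taking $r=p$) that does \emph{not} match the power $A_n^{1+p/(N+p)}$ coming from the main term. The paper does not absorb this by ``choosing $r$ large'' but instead sets up \emph{two} sequences $X_n$ and $Z_n$ and invokes the two-sequence geometric convergence Lemma~\ref{bbru_geometric}; this is also why $p>2$ strictly is needed, to get $\kappa=p-2>0$. The residual $\omega$-dependence in~$\gamma$ is then disposed of via Remark~\ref{bbru_rem:pv}, as you correctly anticipate.
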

\begin{proof}
Let $u_\omega:= \min \{u,1- \omega/ 4 \}$, take the cylinder for which
\eqref{bbru_1st-alt-relation} holds, define
\begin{align*}
R_n=\frac{R}{2}+\frac{R}{2^{n+1}},\quad n \in \mathbb{N}_0,
\end{align*}
and construct the family
\begin{align*}
Q_{R_n}^{t^*}:=(0,t^*)+Q(dR_n^p,R_n)=B_{R_n}\times(\tau_n,t^*), \quad
\tau_n:=t^*-dR_n^p, \quad n \in \mathbb{N}_0;
\end{align*}
note that $\smash{Q_{R_n}^{t^*}\to Q_{R/2}^{t^*}}$ as $n\to\infty$.
 Let $\{ \xi_n \}_{n \in \mathbb{N}}$ be a sequence
of piecewise smooth cutoff functions satisfying
\begin{equation}\label{bbru_cutoff1}
\begin{cases}
\xi_n=1 \text{ in $Q_{R_{n+1}}^{t^*}$}, \quad
\xi_n=0 \text{ on the parabolic boundary of $Q_{R_n}^{t^*}$},\\[2mm]
\displaystyle
|\Grad \xi_n|\leq\frac{2^{n+1}}{R},\quad 0\leq\pt\xi_n\leq\frac{2^{p(n+1)}}{dR^p},
\quad  |\Delta\xi_n|\leq\frac{2^{p(n+1)}}{R^p},
\end{cases}
\end{equation}
and define
\begin{equation*}
k_n:=1-\frac{\omega}{4}-\frac{\omega}{2^{n+2}},\quad n \in
\mathbb{N}_0.
\end{equation*}
Now take $\smash{\varphi=[(u_\omega)_h-k_n]^+\xi_n^p}$, $K=B_{R_n}$ in \eqref{bbru_weak-steklov}
and integrate in time over $(\tau_n,t)$ for $t\in(\tau_n,t^*)$.
Applying integration by parts to the
first term gives
\begin{align*}
F_1&:= \int_{\tau_n}^t\int_{B_{R_n}}\partial_s
 u_h [(u_\omega)_h-k_n]^+\xi_n^p \, dx \, ds \\
&=  \frac{1}{2}\int_{\tau_n}^t\int_{B_{R_n}}\partial_s
 \Bigl( \bigl([(u_\omega)_h-k_n]^+\bigr)^2 \Bigr) \xi_n^p\, dx \, ds
  +\left(1-\frac{\omega}{4}-k_n\right)\int_{\tau_n}^t\int_{B_{R_n}}\partial_s
 \biggl( \biggl(\Bigl[u-\Bigl(1-
  \frac{\omega}{4}\Bigr) \Bigr]^+\biggr)_h\biggr) \xi_n^p \, dx \, ds \\
&=
\frac{1}{2}\int_{B_{R_n}\times\{t\}}\bigl([u_\omega-k_n]_h^+\bigr)^2\xi_n^p
\, dx \, ds -
  \frac{1}{2}\int_{B_{R_n}\times\{\tau_n\}}\bigl([u_\omega-k_n]_h^+\bigr)^2\xi_n^p
\, dx \, ds
\\
  &\quad  -\frac{p}{2}\int_{\tau_n}^t\int_{B_{R_n}}\bigl([u_\omega-k_n]_h^+\bigr)^2\xi_n^{p-1}
  \partial_s \xi_n   \, dx \, ds  \\ &
 \quad +\left(1-\frac{\omega}{4}-k_n\right)\int_{\tau_n}^t\int_{B_{R_n}}\partial_s
  \biggl( \biggl(\Bigl[u-\Bigl(1-
  \frac{\omega}{4}\Bigr) \Bigr]^+\biggr)_h \biggr) \xi_n^p  \, dx \, ds.
\end{align*}
In light of  standard convergence
properties of the Steklov average, we obtain
\begin{align*}
F_1
\to  F_1^* :=&
\frac{1}{2}\int_{B_{R_n}\times\{t\}}\bigl([u_\omega-k_n]^+\bigr)^2\xi_n^p
 \, dx \, ds
  -\frac{p}{2}\int_{\tau_n}^t\int_{B_{R_n}}\bigl([u_\omega-k_n]^+\bigr)^2\xi_n^{p-1}
  \partial_s \xi_n\, dx \, ds \\
& +\left(1-\frac{\omega}{4}-k_n\right)\biggl(\int_{B_{R_n}\times\{t\}}
  \Bigl[u-\Bigl(1-\frac{\omega}{4}\Bigr)\Bigr]^+\xi_n^{p} \, dx \, ds
\\ & \qquad  -p\int_{B_{R_n}\times\{\tau_n\}}\Bigl[u-\Bigl(1-
  \frac{\omega}{4}\Bigr)\Bigr]^+\xi_n^{p-1}\partial_s  \xi_n \, dx \, ds
  \biggr) \quad \text{as $h \to 0$.}
\end{align*}
 Using \eqref{bbru_cutoff1} and the
nonnegativity of the third term, we arrive at
\begin{align*}
F_1^* & \geq \frac{1}{2}\int_{B_{R_n}\times\{t\}}
\bigl([u_\omega-k_n]^+\bigr)^2\xi_n^p \, dx
-\frac{p}{2d}\left(\frac{\omega}{4}\right)^2\frac{2^{p(n+1)}}{R^p}\int_{\tau_n}^t
\int_{B_{R_n}}\chi_{\{u_\omega\geq k_n\}} \, dx \, ds \\
& \quad-\frac{p}{d}\left(\frac{\omega}{4}\right)^2\frac{2^{p(n+1)}}{R^p}
\int_{\tau_n}^t\int_{B_{R_n}}\chi_{\{u\geq 1-
  \omega/ 4 \}} \, dx \, ds \\
&\geq \frac{1}{2}\int_{B_{R_n}\times\{t\}}\bigl([u_\omega-k_n]^+\bigr)^2
\xi_n^p\, dx   -\frac{3}{2}\frac{p}{d}\left(\frac{\omega}{4}\right)^2
\frac{2^{p(n+1)}}{R^p}\int_{\tau_n}^t\int_{B_{R_n}}
\chi_{\{u_\omega\geq k_n\}} \, dx \, ds,
\end{align*}
the last inequality coming from
$u\geq 1- \omega / 4 \Rightarrow u_\omega\geq k_n$. Since
$\smash{ [u_\omega-k_n]^+\leq \omega / 4}$,  we  know that
\begin{align*}
\bigl([u_\omega-k_n]^+\bigr)^2=\bigl([u_\omega-k_n]^+\bigr)^{2-p}
\bigl([u_\omega-k_n]^+\bigr)^p\geq
\left(\frac{\omega}{4}\right)^{2-p}\bigl([u_\omega-k_n]^+\bigr)^p\geq
\left(\frac{\omega}{2}\right)^{2-p}\bigl([u_\omega-k_n]^+\bigr)^p;
\end{align*}
therefore,  the definition of $d$ implies that
\begin{align}
F_1^*\geq  \frac{1}{2}\left(\frac{\omega}{2}\right)^{2-p}
\int_{B_{R_n}\times\{t\}} \bigl([u_\omega-k_n]^+\bigr)^p\xi_n^p \,
dx -\frac{3}{2}p2^{p-2}\left(\frac{\omega}{4}\right)^p
\frac{2^{p(n+1)}}{R^p}\psi(\omega/4)^{p-1}
\int_{\tau_n}^t\int_{B_{R_n}}\chi_{\{u_\omega\geq k_n\}}\, dx \,
ds.\label{bbru_aux_F1}
\end{align}

We now deal with the diffusive term. The term  
\begin{align*}
F_2:= \int_{\tau_n}^t\int_{B_{R_n}} \bigl(a(u)^{p-1}|\Grad u|^{p-2}\Grad u\bigr)_h\cdot
\Grad\bigl\{[(u_\omega)_h-k_n]^+\xi_n^p\bigr\}\, dx \, ds
\end{align*}
converges for  $h \to 0$  to
\begin{align*}
F_2^* := &  \int_{\tau_n}^t\int_{B_{R_n}}a(u)^{p-1}|\Grad
u|^{p-2}\Grad u\cdot \bigl( \Grad
(u_\omega-k_n)^+\xi_n^p+p(u_\omega-k_n)^+\xi_n^{p-1}\Grad\xi_n
\bigr)
\, dx \, ds \\
=&\int_{\tau_n}^t\int_{B_{R_n}}a(u)^{p-1}\bigl|\xi_n\Grad(u_\omega-k_n)^+\bigr|^p
\, dx \, ds + \tilde{F}_2^*,
\end{align*}
where we define
\begin{align*}
\tilde{F}_2^* :=
p\int_{\tau_n}^t\int_{B_{R_n}}a(u)^{p-1}|\Grad u|^{p-2}\Grad u\cdot
\Grad\xi_n(u_\omega-k_n)^+\xi_n^{p-1} \, dx \, ds.
\end{align*}
Since $\smash{\Grad(u_\omega-k_n)^+}$ is
nonzero only within the set $\{k_n<u<1- \omega/4\}$  and
$$a(u)\geq \gamma_1\psi (\omega / 4 ) \quad \textrm{on} \quad
\{k_n<u<1- \omega / 4 \},$$  we may estimate the first term
 of  $F_2^*$ from below by
\begin{equation}\label{bbru_eq:F2A}
\int_{\tau_n}^t\int_{B_{R_n}}a(u)^{p-1}
\bigl|\xi_n\Grad(u_\omega-k_n)^+\bigr|^p
 \, dx \, ds \geq
\left[\gamma_1\psi(\omega/4)\right]^{p-1}
\int_{\tau_n}^t\int_{B_{R_n}}\bigl|\xi_n\Grad(u_\omega-k_n)^+\bigr|^p\,
dx \, ds.
\end{equation}
Let us now focus on~$\smash{\tilde{F}_2^*}$.
 Using that $\Grad(u_\omega-k_n)^+$ is
nonzero only within the set $\{k_n<u<1-\omega/4 \}$,
integrating by parts, and using \eqref{bbru_cond-a(u)} and \eqref{bbru_cutoff1},
we obtain
\begin{align*}
\bigl| \tilde{F}_2^* \bigr|
& \leq  p\int_{\tau_n}^t\int_{B_{R_n}}|a(u)|^{p-1}\bigl|\Grad
(u_\omega-k_n)^+
 \bigr|^{p-1}
|\Grad\xi_n|(u_\omega-k_n)^+\xi_n^{p-1} \, dx \, ds \\
& \quad +\biggl|p \Bigl(1-\frac{\omega}{4}-k_n\Bigr)\int_{\tau_n}^t\int_{B_{R_n}}\xi_n^{p-1}
\Grad\xi_n\cdot\Grad\biggl\{\frac{1}{p-1}
\biggl(\int_{1-\omega/4}^ua(s)\ds\biggr)
_+^{p-1}\biggr\}\, dx \, ds \biggr|  \\
& \leq  p\left[\gamma_2\psi\left(\omega/ 2 \right)
\right]^{p-1}\int_{\tau_n}^t\int_{B_{R_n}}
|\Grad\xi_n|(u_\omega-k_n)^+\bigl|\xi_n\Grad
(u_\omega-k_n)^+\bigr|^{p-1}
 \, dx \, ds  \\
& \quad  + p \left( \frac{\omega}{4} \right) \biggl|-
\int_{\tau_n}^t\int_{B_{R_n}}\biggl(\int_{1-\omega/4}^ua(s)
\ds\biggr)_+^{p-1}\bigl(
(p-1)\xi_n^{p-2}|\Grad\xi_n|^2+\xi_n^{p-1}\Delta\xi_n
 \bigr) \, dx \, ds
\biggr|.
\end{align*}
Next, we take into account that
\begin{align*}
\biggl(\int_{1-\omega/4 }^ua(s)\ds\biggr)^+\leq
\frac{\omega}{4} \psi\left(\omega / 4 \right),
\end{align*}
and apply  Young's inequality
\begin{align}\label{young}
ab\leq\frac{\epsilon^r}{r}a^r+\frac{b^{r'}}{{r'}\epsilon^{r'}}
\quad\text{if $a, b\geq 0$}, \quad   \frac{1}{r}  +
  \frac{1}{r'} =1, \quad \epsilon >0,
\end{align}
for the choices
\begin{align*}
r=p, \quad
a=|\Grad \xi_n|(u_\omega-k_n)^+,
\quad b=\bigl|\Grad (u_\omega-k_n)^+\bigr|^{p-1}
 \quad \text{and} \quad
\epsilon_1^{-p'}=\frac{p'}{p}\frac{(\gamma_1^{p-1}-1)\psi(\omega/4
)^{p-1}}{\gamma_2^{p-1}\psi(\omega/2)^{p-1}}>0.
\end{align*}
This leads to
\begin{align}
\begin{split}
\bigl| \tilde{F}_2^* \bigr|
& \leq   \frac{1}{\epsilon_1^p}\left[\gamma_2\psi\left(\omega/ 2
\right)\right]^{p-1} \left(\frac{\omega}{4}\right)^p\frac{2^{p(n+1)}}{R^p}
 \int_{\tau_n}^t\int_{B_{R_n}} \chi_{\{u_\omega\geq k_n\}} \, dx \, ds
 \\
& \quad  + (p-1)\epsilon_1^{p'}\left[\gamma_2\psi (\omega/ 2
 )\right]^{p-1}\int_{\tau_n}^t\int_{B_{R_n}}\bigl|\xi_n\Grad
(u_\omega-k_n)^+\bigr|^{p}\, dx \, ds\\ &\quad 
+p^2\left(\frac{\omega}{4}\right)^p
\psi\left(\omega / 4 \right)^{p-1}\frac{2^{p(n+1)}}{R^p}
\int_{\tau_n}^t\int_{B_{R_n}} \chi_{\{u_\omega\geq k_n\}} \, dx \,
ds \\
& \leq  
 \biggl\{\frac{(p-1)\gamma_2^{p-1}\psi\left(\omega/2
   \right)^{p-1}}{(\gamma_1^{p-1}-1)\psi\left(\omega/4 \right)^{p-1}}\biggr\}^{p-1}
\left[\gamma_2\psi(\omega/2)\right]^{p-1}
\left(\frac{\omega}{4}\right)^p\frac{2^{p(n+1)}}{R^p}
 \int_{\tau_n}^t\int_{B_{R_n}} \chi_{\{u_\omega\geq k_n\}}\, dx \, ds \\
 & \quad +\bigl(\gamma_1^{p-1}-1\bigr)\psi\left(\omega/ 4 \right)^{p-1}
 \int_{\tau_n}^t\int_{B_{R_n}}\bigl|\xi_n\Grad
 (u_\omega-k_n)^+\bigr|^{p}
 \, dx \, ds \\
& \quad +p^2\left(\frac{\omega}{4}\right)^p \psi\left(\omega/4
\right)^{p-1}\frac{2^{p(n+1)}}{R^p} \int_{\tau_n}^t\int_{B_{R_n}}
\chi_{\{u_\omega\geq k_n\}} \, dx \, ds.
\end{split}
  \label{bbru_eq:F2B}
\end{align}
Hence, from \eqref{bbru_eq:F2A} and \eqref{bbru_eq:F2B}, and
observing that
\begin{align*}
\left[\frac{\psi\left(\omega/ 2 \right)}{\psi\left(\omega / 4 \right)}
\right]^{p(p-1)}=\left(\frac{4}{2}\right)^{p\beta_2}=2^{p\beta_2},
\end{align*}
we obtain
\begin{align} \label{bbru_aux_F2}
\begin{split}
 F_2^* & \geq \psi\left(\omega/ 4 \right)^{p-1}
\int_{\tau_n}^t\int_{B_{R_n}}\bigl|\xi_n\Grad
(u_\omega-k_n)^+\bigr|^{p} \, dx \, ds   \\
&\quad -\biggl\{p^2+2^{p \beta_2}
\biggl[\frac{{p'}\gamma_2^p}{p(\gamma_1^{p-1}-1)}\biggr]^{p-1} \biggr\}\left(\frac{\omega}{4}\right)^p
 \frac{2^{p(n+1)}}{R^p}
\psi\left(\omega / 4 \right)^{p-1}\int_{\tau_n}^t\int_{B_{R_n}}
 \chi_{\{u_\omega\geq k_n\}}\, dx \, ds .
\end{split}
\end{align}
Finally,  for the lower order term
\begin{align*}
F_3:= \int_{\tau_n}^t\int_{B_{R_n}} \bigl(\chi u f(u)\Grad v\bigr)_h\cdot
\Grad\bigl\{[(u_\omega)_h-k_n]^+\xi_n^p\bigr\} \, dx \, ds
\end{align*}
we have
\begin{align*}
F_3\to F_3^*
 := &\int_{\tau_n}^t\int_{B_{R_n}}\chi u f(u)\Grad v\cdot \bigl( \Grad
(u_\omega-k_n)^+\xi_n^p+p(u_\omega-k_n)^+\xi_n^{p-1}\Grad\xi_n \bigr)
 \, dx \, ds \\
 =&\int_{\tau_n}^t\int_{B_{R_n}}\chi u f(u)\Grad v\cdot \Grad
(u_\omega-k_n)^+\xi_n^p \, dx \, ds
\\  &
+p\int_{\tau_n}^t\int_{B_{R_n}}\chi u f(u)\Grad v\cdot \Grad \xi_n
(u_\omega-k_n)^+\xi_n^{p-1} \, dx \, ds
 \quad \text{as $h \to 0$}.
\end{align*}
Applying Young's inequality \eqref{young}, with
\begin{align*}
r=p,\quad  a=\Grad(u_\omega-k_n)^+\xi_n,\quad
 b=\chi u f(u)\xi_n^{p-1}\Grad v \quad \text{and} \quad
\epsilon_2^p=\frac{p}{2}\psi (\omega / 4 )^{p-1}>0,
\end{align*}
using the fact that $(u_\omega-k_n)^+\leq \omega / 4$ and
defining  $\smash{M:=\|\chi uf(u)\|_{L^\infty(Q_T)}}$, we may
 estimate $F_3^*$ as follows:
\begin{align*}
 F_3^*& \leq \frac{\epsilon_2^p}{p}\int_{\tau_n}^t\int_{B_{R_n}}
\bigl|\Grad(u_\omega-k_n)^+\xi_n\bigr|^p
 \, dx \, ds
+\frac{M^{p'}}{{p'}\epsilon_{2}^{p'}}\int_{\tau_n}^t\int_{B_{R_n}}|\Grad v|^{p'}
\chi_{\{u_\omega\geq k_n\}} \, dx \, ds  \\
&\quad +pM\int_{\tau_n}^t\int_{B_{R_n}} |\Grad v|\left(\frac{\omega}{4}\right)
|\Grad \xi_n|\chi_{\{u_\omega\geq k_n\}} \, dx \, ds \\
& \leq\frac{1}{2} \psi\left(\omega/ 4 \right)^{p-1}
\int_{\tau_n}^t\int_{B_{R_n}}\bigl|\Grad(u_\omega-k_n)^+\xi_n\bigr|^p\,
dx \, ds
+\frac{(p/2)^{-p'/p}}{p'}\frac{M^{p'}}{\psi (\omega/4)}
\int_{\tau_n}^t\int_{B_{R_n}}|\Grad v|^{p'}\chi_{\{u_\omega\geq
  k_n\}}\, dx \, ds \\
 & \quad +\epsilon_3^p\left(\frac{\omega}{4}\right)^p
\int_{\tau_n}^t\int_{B_{R_n}}|\Grad \xi_n|^p\chi_{\{u_\omega\geq
  k_n\}} \, dx \, ds
+\frac{pM^{p'}}{{p'}\epsilon_3^{p'}}
\int_{\tau_n}^t\int_{B_{R_n}}|\Grad v|^{p'}\chi_{\{u_\omega\geq
  k_n\}} \, dx \, ds,
\end{align*}
applying again Young's inequality \eqref{young} to the last term in
the right-hand side, this time with
$$r=p, \quad a=|\Grad\xi_n| \omega / 4 ,\quad  b=M|\Grad v|,\quad
\epsilon_3^{p'}=\psi\left(\omega / 4 \right)>0.$$ Using
\eqref{bbru_cutoff1}, we   obtain
\begin{align*}
F_3^* \leq F_3^{**} :=
& \frac{1}{2}\psi\left(\omega / 4 \right)^{p-1}
\int_{\tau_n}^t\int_{B_{R_n}}\bigl|\Grad(u_\omega-k_n)^+\xi_n\bigr|^p
\, dx \, ds
\\&
+ \frac{M^{p'}}{{p'}\psi (\omega /  4 )}
\left[\left(\frac{p}{2}\right)^{-p'/p}
+p\right]\int_{\tau_n}^t\int_{B_{R_n}}|\Grad v|^{p'}
\chi_{\{u_\omega\geq k_n\}} \, dx \, ds \\
& + \left(\frac{\omega}{4}\right)^p
\frac{2^{p(n+1)}}{R^p}\psi\left( \omega / 4 \right)^{p-1}
 \int_{\tau_n}^t\int_{B_{R_n}}
\chi_{\{u_\omega\geq k_n\}} \, dx \, ds.
\end{align*}
Additionally, using H\"older's inequality, we may write
\begin{align*}
\int_{\tau_n}^t\int_{B_{R_n}}|\Grad v|^{p'}\chi_{\{u_\omega\geq k_n\}}
\, dx \, ds
\leq\|\Grad
v\|^{p'}_{L^{p'p}(Q_T)}\biggl(\int_{\tau_n}^t\bigl|A^+_{k_n,R_n}(\sigma)
\bigr|\,
d\sigma \biggr)^{1-1/p},
\end{align*}
where $\smash{| A^+_{k_n,R_n}(\sigma)|}$ denotes the measure of the
set
\begin{align*}
A^+_{k_n,R_n}(\sigma):=\bigl\{x\in B_{R_n} \, : \, u(x,\sigma)>k_n
 \bigr\}.
\end{align*}
Thus we obtain
\begin{align}\label{bbru_aux_F3}
\begin{split}
 F_3^{**}&  \leq \frac{1}{2}\psi (\omega / 4 )^{p-1}
\int_{\tau_n}^t\int_{B_{R_n}}\bigl|\xi_n\Grad(u_\omega-k_n)^+\bigr|^p
\, dx \, ds +\left(\frac{\omega}{4}\right)^p\frac{2^{p(n+1)}}{R^p}
\psi\left(\omega/ 4 \right)^{p-1}\int_{\tau_n}^t
\int_{B_{R_n}}\chi_{\{u_\omega\geq k_n\}} \, dx \, ds \\
&\quad +\frac{M^{p'}}{{p'}\psi\left(\omega / 4 \right)}
\left[\left(\frac{p}{2}\right)^{-p'/p}
+p\right] \|\Grad v\|^{p'}_{L^{p'p}(Q_T)}
\biggl(\int_{\tau_n}^t\bigl|A^+_{k_n,R_n}(\sigma)\bigr|\,d\sigma\biggr)^{1-1/p}.
\end{split}
\end{align}
Combining   the resulting estimates \eqref{bbru_aux_F1},
\eqref{bbru_aux_F2}, \eqref{bbru_aux_F3} and multiplying  by
$2(\omega/ 2)^{p-2}$ yields
\begin{equation}\label{bbru_aux_ineq}
\begin{split}
&\esssup_{\tau_n\leq t\leq
  t^*}\int_{B_{R_n}\times\{t\}}\bigl([u_\omega-k_n]^+
\bigr)^p\xi_n^p \, dx \, ds
+ \frac{2}{d}\int_{\tau_n}^{t^*}\int_{B_{R_n}}\bigl|\xi_n
\Grad(u_\omega-k_n)^+\bigr|^p \, dx \, ds \\
& \leq  \biggl\{\frac{3}{2}p2^{p-2}+ p^2+2^{p \beta_2}
 \biggl[\frac{{p'}\gamma_2^p}{p(\gamma_1^{p-1}-1)}\biggr]^{p-1}
\biggr\}\left(\frac{\omega}{4}\right)^p\frac{2^{p(n+1)}}{R^p}\frac{2}{d}
\int_{\tau_n}^{t^*}\int_{B_{R_n}}\chi_{\{u_\omega\geq k_n\}} \, dx \,
ds \\
&\qquad +2\frac{\left(\omega/ 2 \right)^{p-2}M^{p'}}{{p'}\psi
\left(\omega / 4 \right)}\left[\left(\frac{p}{2}\right)^{-{p'}/p}
+p\right]\|\Grad v\|^{p'}_{L^{p'p}(Q_T)}
\biggl(\int_{\tau_n}^{t^*}|A^+_{k_n,R_n}(\sigma)|\,d
\sigma\biggr)^{1-1/p}.
\end{split}
\end{equation}
Next we perform a change in the time variable putting
$\smash{\bar{t}:=\frac{1}{d}(t-t^*)}$,
which transforms $\smash{Q(dR_n^p,R_n)}$ into
$\smash{Q^{t^*}_{R_n}}$.
Furthermore, if we define
$\smash{\bar{u}_\omega (\cdot,\bar{t}):=u_\omega(\cdot,t)}$ and
$\smash{\bar{\xi}_n (\cdot,\bar{t})=\xi_n(\cdot,t)}$,
then defining for each $n$,
\begin{align*}
 A_n:=\int_{-R_n^p}^0\int_{B_{R_n}}\chi_{\{\bar{u}_\omega \geq k_n\}}\dx\, d\bar{t}
=\frac{1}{d}\int_{\tau_n}^t\int_{B_{R_n}}\chi_{\{u_\omega\geq k_n\}}
\, dx \, ds
\end{align*}
we may rewrite
\eqref{bbru_aux_ineq} more concisely as
\begin{align} \label{bbru_eq:concise}
\begin{split}
\bigl\|(\bar{u}_\omega -k_n)^+\bar{\xi}_n \bigr\|^p_{V^p(Q_{R_n}^{t^*})}
&\leq
 2\biggl\{\frac{3}{2}p2^{p-2}+ p^2+ 2^{p \beta_2}
 \biggl[\frac{{p'}\gamma_2^p}{p(\gamma_1^{p-1}-1)}\biggr]^{p-1}
\biggr\}\left(\frac{\omega}{4}\right)^p\frac{2^{p(n+1)}}{R^p} A_n \\
& \quad+
2\left[\left(\frac{p}{2}\right)^{-{p'}/p}+p\right]\frac{M^{p'}}{p'}
\left(\frac{\omega}{2}\right)^{(p-2)/p}\psi\left(\omega/4 \right)^{1-p-1/p}\|\Grad
v\|^{p'}_{L^{p'p}(Q_T)}A_n^{1-1/p},
\end{split}
\end{align}
where $V^p(\Om_T)=L^\infty(0,T;L^p(\Om))\cap L^p(0,T;W^{1,p}(\Om))$
endowed with the obvious norm. Next, observe that
 by application of a well-known embedding theorem (cf. \cite[\S I.3]{DiBe}), we get
\begin{align}
\begin{split}
\frac{1}{2^{p(n+1)}}\left(\frac{\omega}{4}\right)^p A_{n+1}
& =   |k_n-k_{n+1}|^p A_{n+1}
 \leq
 \bigl\|(\bar{u}_\omega -k_n)^+\bigr\|^p_{p,Q(R_{n+1}^p,R_{n+1})}
\leq \bigl\|(\bar{u}_\omega -k_n)^+\bar{\xi}_n \bigr\|^p_{p,Q(R_{n}^p,R_{n})}\\
& \leq   C\bigl\|(\bar{u}_\omega -k_n)^+\bar{\xi}_n \bigr\|^p_{V^p(Q_{R_n}^{t^*})}
A_n^{p/(N+p)}.
\end{split}
\label{bbru_eq:in-geo1}
\end{align}
Now, applying  \eqref{bbru_eq:concise}, we get
\begin{align}
\begin{split}
\frac{1}{2^{p(n+1)}}\left(\frac{\omega}{4}\right)^p A_{n+1}& \leq  2C
\biggl\{\frac{3}{2}p2^{p-2}+ p^2+ 2^{p\beta_2}
 \biggl[\frac{{p'}\gamma_2^p}{p
(\gamma_1^{p-1}-1)}\biggr]^{p-1}
\biggr\}\left(\frac{\omega}{4}\right)^p
\frac{2^{p(n+1)}}{R^p} A_n^{1+p/(N+p)}  \\
&\quad +2C\left[\left(\frac{p}{2}\right)^{-q/p}+p\right]\frac{M^{p'}}{p'}
\left(\frac{\omega}{2}\right)^{(p-2)/p}
\psi (\omega/4)^{1-p-1/p}\|\Grad v\|^{p'}_{L^{p'p}(Q_T)}A_n^{1-1/p+
p/(N+p)}.
\end{split}
\label{bbru_eq:in-geo2}
\end{align}
Now let us
define
\begin{align*}
X_n:=\frac{A_n}{|Q(R_n^p,R_n)|},\qquad
Z_n:=\frac{A_n^{1/p}}{|B_{R_n}|}, \quad n \in \mathbb{N}_0.
\end{align*}
Dividing \eqref{bbru_eq:in-geo2} by $\smash{\frac{1}{2^{p(n+1)}}\left(\frac{\omega}{4}
\right)^p|Q(R_{n+1}^p,R_{n+1})|}$ yields
\begin{align*}
X_{n+1}& \leq  2^{pn}\biggl( 2C\biggl\{\frac{3}{2}p2^{p-2}+ p^2
+2^{p\beta_2}
\biggl[\frac{{p'}\gamma_2^p}{p(\gamma_1^{p-1}-1)}\biggr]^{p-1}
  \biggr\} X_n^{1+p/(N+p)}\\
& \quad +
2^{3-2/p+p}C\left[\left(\frac{p}{2}\right)^{-{p'}/p}+p\right]\frac{M^{p'}}{p'}
\left(\frac{\omega}{2}\right)^{p-2}\psi\left(\omega/4\right)^{1-p-1/p}
R^{N\kappa}
\|\Grad v\|^q_{L^{p'p}(Q_T)}X_n^{p/(N+p)}Z_n^{p-1}\biggr)\\
& \leq \gamma 2^{pn}\left(X_n^{1+\alpha}+X_n^\alpha
  Z_n^{1+\kappa}\right),\qquad n
 \in \mathbb{N}_0,
\end{align*}
with $\alpha=p/(N+p)>0$, $\kappa=p-2>0$ and
\begin{align*}
\gamma:=2C\max\biggl\{& \frac{3}{2}p2^{p-2}+ p^2
+2^{p \beta_2}
\biggl[\frac{{p'}\gamma_2^p}{p(\gamma_1^{p-1}-1)}\biggr]^{p-1},  \\
& 2^{3-2/p+p}\left[\left(\frac{p}{2}
\right)^{-{p'}/p}+p\right]\frac{M^{p'}}{p'}
\left(\frac{\omega}{2}\right)^{p-2}
\left[\psi\left(\omega/4 \right)
\right]^{1-p-1/p}R^{N\kappa}\biggr\}>0.
\end{align*}
(In the choice of~$\kappa$  we need
the assumption that~$p$ is {\em strictly} larger than~$2$.)
In the spirit of Remark~\ref{bbru_rem:pv}, let us assume that
\begin{align*}
\left(\frac{\omega}{2}\right)^{p-2}
\left[\psi\left(\omega/4 \right)
\right]^{1-p-1/p}R^{N\kappa}\leq 1.
\end{align*}
Therefore, with this assumption
we conclude that $\gamma$ is independent of $\omega$ and $R$.

Reasoning analogously, we obtain
\begin{align*}
Z_{n+1}\leq \gamma 2^{pn}\left(X_n+Z_n^{1+\kappa}\right).
\end{align*}
Now, let $\sigma=\min\{\alpha,\kappa\}$ and notice that, if we set
$\smash{\nu_0:=2\gamma^{-(1+\kappa)/\sigma}(2^p)^{-(1+\kappa)/\sigma^2}}$,
it follows from \eqref{bbru_1st-alt-relation} that
\begin{align}\label{bbru_ineq:0s}
X_0+Z_0^{1+\kappa}\leq
2\gamma^{-(1+\kappa)/\sigma}(2^p)^{-(1+\kappa)/\sigma^2}.
\end{align}
Then, using Lemma~\ref{bbru_geometric},
we are able to conclude that $X_n,Z_n\to 0$ as $n\to\infty$.
Finally, notice that
$R_n\to R/2$ and $k_n\to 1-\omega/4$, and this implies that
\begin{align*}
\bigl|\bigl\{(x,t)\in Q \bigl((R/2)^p,R/2 \bigr) \, : \,
\bar{u}_\omega (x,\bar{t})\geq 1- \omega / 4 \bigr\}\bigr|=\bigl|
\bigl\{(x,t)\in Q_{R/2}^{t^*} \, : \,  u(x,t)>1-\omega/4 \bigr\}\bigr|=0.
\end{align*}
This completes the proof.
\end{proof}

Now we  show that the conclusion of Lemma~\ref{bbru_1st-alt-lemma}
is valid in a full cylinder of the type $Q(\tau,\rho)$. To this end,
we  exploit the fact that at the time level
$-\hat{t}:=t^*-d(R/2)^p$, the function $x\mapsto u(x,t)$ is strictly
below $1-\omega/4 $ in the ball $B_{R/2}$. We use this time level as
an initial condition to make the conclusion of the lemma hold up to
$t=0$, eventually shrinking the ball. This requires the use of
logarithmic estimates.

Given constants $a,b,c$ with $0<c<a$, we define the nonnegative
function
\begin{align} \label{bbru_rhoplusdef}
\varrho^\pm_{a,b,c}(s)&:= \left(\ln \frac{a}{a+c-(s-b)|_\pm}
 \right)^+ = \begin{cases} \displaystyle
\ln \frac{a}{a+c\pm(b-s)} & \text{if $b\pm c\lessgtr s\lessgtr b\pm (a+c)$,}\\
0& \text{if $s\lesseqgtr b\pm c$,}
\end{cases}
\end{align}
whose first derivative is given by
$$\bigl(\varrho^\pm_{a,b,c}\bigr)'(s)=\begin{cases} \displaystyle
\frac{1}{(b-s)\pm(a+c)}& \text{ if $b\pm c\lessgtr s\lessgtr b\pm (a+c)$} \\
0& \text{ if $s\lessgtr b\pm c$}\end{cases} \quad \gtreqqless 0,$$
and its second derivative, away from  $s=b\pm c$, is
$$\bigl(\varrho^\pm_{a,b,c}\bigr)''=\bigl\{\bigl(\varrho^\pm_{a,b,c}\bigr)'
\bigr\}^2\geq 0.$$ Given $u$ bounded in $(x_0,t_0)+Q(\tau,\rho)$
and a number $k$, define
$$H^\pm_{u,k}:=\esssup_{(x_0,t_0)+Q(\tau,\rho)}\bigl|(u-k)^\pm \bigr|,$$
and the function
\begin{equation}\label{bbru_log-function}
\Psi^\pm\bigl(H^\pm_{u,k},(u-k)^\pm,c\bigr):=
\varrho^\pm_{H^\pm_{u,k},k,c}(u),\qquad 0<c<H^\pm_{u,k}.
\end{equation}

\begin{lemma}
For every number $\nu_1\in(0,1)$, there exists $s_1\in\N$, independent of
$\omega$ and $R$, such that
\begin{equation*}
\bigl|\bigl\{x\in B_{R/4} \, : \,  u(x,t)\geq 1-\omega / 2^{s_1} \bigr\}\bigr|
\leq\nu_1|B_{R/2}| \quad \text{\em for all $t\in(-\hat{t},0)$.}
\end{equation*}
\end{lemma}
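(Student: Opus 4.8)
The plan is to derive a logarithmic energy estimate for $u$ near the degenerate value~$1$, using the time level $-\hat t=t^*-d(R/2)^p$ as a favourable initial slice: Lemma~\ref{bbru_1st-alt-lemma} already tells us that $u<1-\omega/4$ a.e.\ in $Q_{R/2}^{t^*}$, and in particular on $B_{R/2}\times\{-\hat t\}$. Fix $\nu_1$ and, for $s_1\in\N$ to be chosen, put $k:=1-\omega/4$, $c:=\omega/2^{s_1}$, and $H:=H^+_{u,k}=\esssup_{B_{R/2}\times(-\hat t,0)}(u-k)^+\le\omega/4$ (since $u\le1$). If $H<\omega/4-c$ the set in the statement is empty, so I may assume $H\ge\omega/4-c$; for $s_1$ large this forces $c<H$ and $H+c\ge\omega/4$. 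I work with $\Psi:=\Psi^+\bigl(H,(u-k)^+,c\bigr)=\varrho^+_{H,k,c}(u)$ from \eqref{bbru_log-function}, so that $0\le\Psi\le\ln(H/c)$, $\varrho':=(\varrho^+_{H,k,c})'\ge1/H$ on $\{\Psi>0\}$, $\varrho''=(\varrho')^2$, and --- the inequality I would use repeatedly --- $(1-u)\,\varrho'(u)\le1$ on $\{\Psi>0\}$, because there $1-u=\omega/4-(u-k)^+\le H+c-(u-k)^+=1/\varrho'(u)$. Finally, let $\zeta=\zeta(x)$ be a time-independent cutoff, $\zeta\equiv1$ on $B_{R/4}$, $\supp\zeta\subset B_{R/2}$, $|\Grad\zeta|\le C/R$, $|\Delta\zeta|\le C/R^2$.

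First I would test the Steklov-averaged formulation \eqref{bbru_weak-steklov} (with $K=B_{R/2}$) by $\varphi=2\Psi\,\varrho'(u)\,\zeta^p=\partial_u\bigl[(\varrho^+_{H,k,c})^2\bigr](u)\,\zeta^p$, integrate over $(-\hat t,t)$ for $t\in(-\hat t,0)$, and let the Steklov parameter tend to~$0$. The parabolic term produces $\int_{B_{R/2}\times\{t\}}\Psi^2\zeta^p\dx$ and \emph{nothing} at $t=-\hat t$, since there $u<1-\omega/4=k$, hence $(u-k)^+=0$ and $\Psi=0$; this is exactly where Lemma~\ref{bbru_1st-alt-lemma} enters. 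Using $\Grad A(u)=a(u)\Grad u$, the diffusion term splits into the nonnegative principal part $2\iint a(u)^{p-1}|\Grad u|^p(\varrho')^2(1+\Psi)\zeta^p$, where the identity $\varrho''=(\varrho')^2$ manufactures the helpful factor $1+\Psi$, plus a cross term carrying $\zeta^{p-1}\Grad\zeta$. Young's inequality absorbs half of the principal part; then, using $\varrho'\ge1/H$ (so $(\varrho')^{2-p}\le H^{p-2}$), the bound $a(u)\le\gamma_2\psi(\omega/4)$ on $\{\Psi>0\}$ --- legitimate there because $1-u<\omega/4<\delta$, by \eqref{bbru_cond-a(u)} --- together with $\Psi^p(1+\Psi)^{1-p}\le\Psi$ and the definition of~$d$, the remaining diffusion error is $\le C\ln(H/c)\,|B_{R/2}|$ with $C$ depending only on the data.

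The chemotaxis contribution $-\iint\chi uf(u)\Grad v\cdot\Grad\bigl(2\Psi\varrho'(u)\zeta^p\bigr)$ is the delicate part, and I expect it to be the main obstacle. Its $\Grad u$-part is $-2\iint\chi uf(u)(\varrho')^2(1+\Psi)\zeta^p\,\Grad v\cdot\Grad u$; here I would exploit $f(1)=0$, i.e.\ $|\chi uf(u)|\le\chi\|f'\|_{\infty}(1-u)$, then kill one power of $\varrho'$ via $(1-u)\varrho'(u)\le1$, reducing it to $C\iint(1+\Psi)\zeta^p|\Grad v|\,|\Grad\Psi|$, which is controlled by Young's inequality against the diffusion principal part (for $p=2$ this immediately yields an $\omega$- and $R$-free constant once the definition of~$d$ is used; for $p>2$ the extra powers of $\varrho'$ produced by the $p$-Laplacian must be tracked carefully against the $\psi(\omega/4)$-powers coming from $a(u)$ --- this is the genuinely new bookkeeping, where the two-sided technique of \cite{Urb} and the $p$-Laplacian technique of \cite{Porz-Vesp} have to be combined). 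The $\Grad\zeta$-part is estimated directly, again through $(1-u)\varrho'(u)\le1$, H\"older's inequality in $x$ over $\{\Psi>0\}$, and the $L^r(Q_T)$-integrability of $\Grad v$ ($r>1$ arbitrary) from Definition~\ref{bbru_def1}. After the change of variable $\bar t:=(t-t^*)/d$, every surviving factor has the form $B_iR^{\theta}\omega^{-b_i}$, which by Remark~\ref{bbru_rem:pv} may be assumed $\le1$; thus the chemotaxis error is also $\le C\ln(H/c)\,|B_{R/2}|$ with $C$ independent of $\omega$ and $R$.

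Collecting the three contributions gives, with $C$ depending only on the data,
\begin{align*}
\esssup_{-\hat t<t<0}\int_{B_{R/4}\times\{t\}}\Psi^2\dx\le C\ln(H/c)\,|B_{R/2}|.
\end{align*}
To conclude, observe that on $\{x\in B_{R/4}:u(x,t)\ge1-\omega/2^{s_1}\}$ one has $(u-k)^+\ge\omega/4-\omega/2^{s_1}$, hence $H+c-(u-k)^+\le\omega/2^{s_1-1}$ and $\Psi\ge\ln\bigl(2^{s_1-1}H\omega^{-1}\bigr)\ge(s_1-4)\ln2$ (using $H\ge\omega/4-\omega/2^{s_1}$), while everywhere $\Psi\le\ln(H/c)\le(s_1-2)\ln2$. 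Therefore, for every $t\in(-\hat t,0)$,
\begin{align*}
\bigl|\{x\in B_{R/4}:u(x,t)\ge1-\omega/2^{s_1}\}\bigr|
\le\frac{C\,(s_1-2)\ln2}{\bigl[(s_1-4)\ln2\bigr]^2}\,|B_{R/2}|
\le\frac{C'}{s_1}\,|B_{R/2}|,
\end{align*}
and it suffices to choose $s_1$ so large --- depending only on $\nu_1$ and the data --- that $C'/s_1\le\nu_1$.
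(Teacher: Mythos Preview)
Your skeleton is the paper's: test \eqref{bbru_weak-steklov} with $2\Psi\varrho'\zeta^p$, use Lemma~\ref{bbru_1st-alt-lemma} to annihilate the contribution at $t=-\hat t$, bound the diffusion and chemotaxis pieces, and conclude by reading off $\Psi\ge(s_1-4)\ln2$ on the bad set. The endgame is fine. The gap is in the diffusion error. You claim that ``the definition of $d$'' makes the cross-term error $\le C\ln(H/c)|B_{R/2}|$ with $C$ independent of $\omega$, but the length of the time interval $(-\hat t,0)$ is \emph{not} controlled by $dR^p$: since $-\hat t=t^*-d(R/2)^p$ and $t^*$ can be as negative as $dR^p-a_0R^p$, one only has $\hat t\le a_0R^p=(\omega/2)^{2-p}R^p\phi(\omega/2^m)^{1-p}$. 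Substituting this, the diffusion error carries the factor $[\psi(\omega/4)/\phi(\omega/2^m)]^{p-1}$, which has \emph{no} positive power of $R$ and therefore cannot be swept under Remark~\ref{bbru_rem:pv}. The paper closes this by invoking the structural hypothesis $\beta_2>\beta_1$: then $[\psi(\omega/4)/\phi(\omega/2^m)]^{p-1}=\omega^{\beta_2-\beta_1}2^{m\beta_1-2\beta_2}\le2^{m\beta_1-2\beta_2}$, which is indeed an $\omega$-free constant (with $m$ fixed data). This is exactly the point where the two degeneracies interact, and it is the missing idea in your argument.

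For the chemotaxis term you deviate from the paper: instead of the crude bound $M=\|\chi uf(u)\|_{L^\infty}$ plus Remark~\ref{bbru_rem:pv}, you exploit $f(1)=0$ to gain a factor $(1-u)$ and cancel one $\varrho'$ via $(1-u)\varrho'(u)\le1$. That is a nice observation, but your sentence ``for $p>2$ the extra powers of $\varrho'$ \dots must be tracked carefully'' is precisely the computation that needs to be done, and it is not automatic: after Young, absorbing into the principal part $\iint a(u)^{p-1}(1+\Psi)(\varrho')^2\zeta^p|\Grad u|^p$ is delicate because $a(u)$ has no positive lower bound on $\{\Psi>0\}$ (there $1-u$ can be arbitrarily small). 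The paper sidesteps this by choosing the Young parameter so that the piece to be absorbed already carries the coefficient $\psi(\omega/4)^{p-1}$, matching the coefficient obtained for the diffusion principal term, and then uses $\gamma_1>1$. As written, your chemotaxis paragraph is a plan rather than a proof.
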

\begin{proof}
Let $k=1-\omega/4$ and
\begin{align}
c=\omega/2^{2+n}, \label{cdef}
\end{align}
with
$n\in\N$ to be chosen. Let $0<\zeta(x)\leq 1$ be a piecewise smooth
cutoff function defined on $B_{R/2}$ such that $\zeta=1$ in
$B_{R/4}$ and $|\Grad \zeta|\leq C/R$. Now consider
the weak formulation \eqref{bbru_weak-steklov} with
$\varphi=2\varrho^+(u_h)(\varrho^+)'(u_h)\zeta^p$ for
$K=B_{R/2}$, where $\varrho^+$ is the  function
 defined in \eqref{bbru_rhoplusdef}. After an integration in time over
$(-\hat{t},t)$, with $t\in(-\hat{t},0)$, we obtain
$G_1+G_2-G_3=0$,
where we define
\begin{align*}
G_1 & := 2\int_{-\hat{t}}^t\int_{B_{R/2}}\partial_s \{u_h\}\varrho^+
(u_h)(\varrho^+)'(u_h)\zeta^p \, dx \, ds, \\
G_2 & := 2\int_{-\hat{t}}^t\int_{B_{R/2}}
\bigl(|\Grad A(u)|^{p-2}a(u)\Grad u\bigr)_h\cdot\Grad
\bigl\{ \varrho^+(u_h) (\varrho^+)'(u_h)\zeta^p\bigr\}\, dx \, ds, \\
G_3 & :=
 2\int_{-\hat{t}}^t\int_{B_{R/2}}\bigl(\chi u f(u)\Grad v\bigr)_h
\cdot\Grad\bigl\{\varrho^+(u_h) (\varrho^+)'(u_h)\zeta^p\bigr\}\, dx
\, ds.
\end{align*}
Using the properties of the function $\zeta$,
we arrive at
\begin{align*}
G_1&=\int_{-\hat{t}}^t\int_{B_{R/2}}\partial_s \left\{\varrho^+
(u_h)\right\}^2\zeta^p \, dx \, ds
=\int_{B_{R/2}\times\{t\}}\left\{\varrho^+(u_h)\right\}^2
\zeta^p\dx-\int_{B_{R/2}\times\{-\hat{t}\}}
\left\{\varrho^+(u_h)\right\}^2\zeta^p\dx.
\end{align*}
Due to Lemma~\ref{bbru_1st-alt-lemma}, at time $-\hat{t}$, the
function $x\mapsto u(x,t)$ is strictly below $1-\omega/4$ in the
ball $B_{R/2}$, and therefore $\smash{\varrho^+ (u (x,-\hat{t}) )
=0}$ for $x\in
  B_{R/2}$. Consequently,
\begin{align}  \label{bbru_est-G1}
G_1
\to \int_{B_{R/2}\times\{t\}}\left\{\varrho^+(u)\right\}^2
\zeta^p\dx-\int_{B_{R/2}\times\{-\hat{t}\}}
\left\{\varrho^+(u)\right\}^2\zeta^p\dx
=\int_{B_{R/2}\times\{t\}}\left\{\varrho^+
(u)\right\}^2\zeta^p\dx \quad  \text{as $h \to 0$.}
\end{align}
The definition of $\smash{H^\pm_{u,k}}$ implies that
\begin{equation}\label{bbru_inter-lemma-aux1}
u-k\leq H^+_{u,k}=\esssup_{Q(\hat{t},R/2)}\Bigl|\left(u-1+
\frac{\omega}{4}\right)^+\Bigr|\leq\frac{\omega}{4}.
\end{equation}
If $\smash{H_{u,k}^+=0}$, the result is trivial; so we assume
$\smash{H_{u,k}^+>0}$ and choose $n$ large enough so that
\begin{align*}
0<\frac{\omega}{2^{2+n}}<H_{u,k}^+.
\end{align*}
 Therefore,  since  $\smash{H_{u,k}^++k-u+c>0}$,
the function $\varrho^+(u)$ is defined in the whole cylinder $Q(\hat{t},R/2)$
by
\begin{align*}
\varrho^\pm_{H_{u,k}^+,k,c}(u)=\begin{cases} \displaystyle
\ln \frac{H_{u,k}^+}{H_{u,k}^++c+k-u} & \text{if $u>k+c$,}\\
0& \text{otherwise.}
\end{cases}
\end{align*}
Relation \eqref{bbru_inter-lemma-aux1} implies that
\begin{equation}\label{bbru_inter-aux-log1}
\frac{H_{u,k}^+}{H_{u,k}^++c+k-u}\leq\frac{\frac{\omega}{4}}{2c-
\frac{\omega}{4}}=2^n, \  \text{ and therefore }
\varrho^+(u)\leq n\ln 2;
\end{equation}
in the nontrivial case $u>k+c$, we also have an estimate for
the derivative of the logarithmic function:
\begin{equation}\label{bbru_inter-aux-log2}
\bigl|(\varrho^+)'(u)\bigr|^{2-p}=\biggl|\frac{-1}{H_{u,k}^++c+k-u}
\biggr|^{2-p}\leq\left|\frac{1}{c}\right|^{2-p}=\left(\frac{\omega}{2^{2+n}}
\right)^{p-2}.
\end{equation}
With these estimates at hand, we have for the diffusive term:
\begin{align*}
G_2 \to G_2^*
&:=  2\int_{-\hat{t}}^t\int_{B_{R/2}}a(u)^{p-1}
|\Grad u|^{p-2}\Grad u\cdot\Grad\bigl\{
\varrho^+(u) (\varrho^+)'(u)\zeta^p\bigr\}\, dx \, ds  \\
&=\int_{-\hat{t}}^t\int_{B_{R/2}}a(u)^{p-1}|\Grad u|^{p}
\bigl\{2 \bigl(1+\varrho^+(u)\bigr)\left[(
\varrho^+)'(u)\right]^2\zeta^p\bigr\}  \, dx \, ds  +  \tilde{G}_2^*
 \quad \text{as $h \to 0$,}
\end{align*}
where we define
\begin{align*}
\tilde{G}_2^* := 2p\int_{-\hat{t}}^t\int_{B_{R/2}}a(u)^{p-1} |\Grad
u|^{p-2}\Grad u\cdot\Grad\zeta \bigl\{\varrho^+(u)
(\varrho^+)'(u)\zeta^{p-1}\bigr\} \, dx \, dt.
\end{align*}
Applying Young's inequality \eqref{young} with the choices
\begin{align*}
r=p, \quad a=|\Grad u|^{p-1} \zeta^{p-1} \bigl|(\varrho^+)'(u)
\bigr|^{2/p'}, \quad b=\bigl|(\varrho^+)'(u)
\bigr|^{1-2/p'}|\Grad\zeta| \quad \textrm{and} \quad
\epsilon_4=1,
\end{align*}
we obtain
\begin{align*}
\bigl| \tilde{G}_2^* \bigr|
& \leq 2p\int_{-\hat{t}}^t\int_{B_{R/2}}
a(u)^{p-1}|\Grad u|^{p-1}|\Grad\zeta|
\varrho^+(u) \bigl|(\varrho^+)'(u)\bigr|\zeta^{p-1}\, dx \, ds \\
& = 2p\int_{-\hat{t}}^t\int_{B_{R/2}}
a(u)^{p-1}\varrho^+(u)|\Grad u|^{p-1} \zeta^{p-1}
\bigl|(\varrho^+)'(u)\bigr|^{2/p'}
\bigl|(\varrho^+)'(u) \bigr|^{1-2/p'}|\Grad\zeta| \, dx \, ds \\
& \leq 2\epsilon_4^p\int_{-\hat{t}}^t\int_{B_{R/2}}
a(u)^{p-1}\varrho^+(u)|\Grad u|^p
\bigl[(\varrho^+)'(u)\bigr]^2\zeta^p \, dx \, ds \\
&\quad  +\frac{2p}{p'\epsilon_4^q}\int_{-\hat{t}}^t
\int_{B_{R/2}}a(u)^{p-1} \varrho^+(u)|\Grad \zeta|^p
\bigl|(\varrho^+)'(u)\bigr|^{2-p} \, dx \, ds \\
& = 2\int_{-\hat{t}}^t\int_{B_{R/2}}a(u)^{p-1}
\varrho^+(u)|\Grad u|^p \bigl[(\varrho^+)'(u)\bigr]^2\zeta^p \, dx \,
ds
\\&\quad
+2(p-1)\int_{-\hat{t}}^t\int_{B_{R/2}}
a(u)^{p-1} \varrho^+(u)|\Grad
\zeta|^p\bigl|(\varrho^+)'(u)\bigr|^{2-p} \, dx \, ds.
\end{align*}
In face of this estimate, we obtain
\begin{align*}
 G_2^*& =  2\int_{-\hat{t}}^t\int_{B_{R/2}}a(u)^{p-1}|\Grad u|^p
\bigl[(\varrho^+)'(u)\bigr]^2\zeta^p \, dx \, ds
\\ & \quad
-2(p-1)\int_{-\hat{t}}^t
\int_{B_{R/2}}a(u)^{p-1}\varrho^+(u)|\Grad \zeta|^p\bigl|
 (\varrho^+)' (u)\bigr|^{2-p} \, dx \, ds  \\
& \geq 2\left[\gamma_1
\psi\left(\omega/4 \right)\right]^{p-1}\int_{-\hat{t}}^t
\int_{B_{R/2}}|\Grad u|^p \bigl[ (\varrho^+)'(u)\bigr]^2
\zeta^p \, dx \, ds  \\ &
\quad -2(p-1)\int_{-\hat{t}}^t \int_{B_{R/2}}
a(u)^{p-1}\varrho^+(u)|\Grad \zeta|^p\bigl|(\varrho^+)'
  (u)\bigr|^{2-p}\, dx \, ds  \\
&  \geq2\left[\gamma_1\psi\left(\omega/4 \right)\right]^{p-1}
\int_{-\hat{t}}^t\int_{B_{R/2}}|\Grad u|^p
\bigl[(\varrho^+)'(u)\bigr]^2\zeta^p\, dx \, ds  \\
& \quad   -2(p-1)n\ln
2\left(\frac{C}{R}\right)^p\left(\frac{\omega}{2^{2+n}}
\right)^{p-2}\int_{-\hat{t}}^t\int_{B_{R/2}}a(u)^{p-1} \chi_{\{u>1-
\omega/4\}} \, dx \, ds,
\end{align*}
and, finally,
\begin{align}
\begin{split}
 G_2^* & \geq 2\left[\gamma_1\psi\left(\omega/4 \right)\right]^{p-1}
\int_{-\hat{t}}^t\int_{B_{R/2}}|\Grad u|^p
\bigl[(\varrho^+)'(u)\bigr]^2\zeta^p \, dx \, ds
\\ & \quad
-2(p-1)n\ln 2\left(\frac{C}{R}\right)^p
\left(\frac{\omega}{2^{2+n}} \right)^{p-2}\hat{t}|B_{R/2}|
\left[\gamma_2\psi\left(\omega/4 \right)\right]^{p-1},
\end{split}
\label{bbru_est-G2a}
\end{align}
where we have used estimates \eqref{bbru_inter-aux-log1},
\eqref{bbru_inter-aux-log2}, the properties of $\zeta$, and the fact
that
\begin{align*}
\gamma_1\psi\left(\omega/4\right)\leq a(u)
\leq\gamma_2\psi\left(\omega/4\right) \quad \text{on the set $\{u>1-
\omega/4\}$.}
\end{align*}
Moreover, from the definition of $\hat{t}$ and our choice of $t^*$
(recall that $t^*\geq dR^p-a_0R^p$),  there  holds
\begin{align} \label{bbru_this}
\hat{t}\leq
a_0R^p=\left(\frac{\omega}{2}\right)^{2-p}\frac{R^p}{\phi\left(
\omega / 2^m \right)^{p-1}}.
\end{align}
Taking into account \eqref{bbru_this}, we obtain from
\eqref{bbru_est-G2a} that
\begin{equation}\label{bbru_est-G2}
\begin{split}
 G_2^* & \geq  2\left[\gamma_1\psi\left(\omega/4
\right)\right]^{p-1}\int_{-\hat{t}}^t\int_{B_{R/2}}|\Grad u|^p
\bigl[(\varrho^+)'(u)\bigr]^2\zeta^p\, dx \, ds \\
& \quad  -2(p-1)n\ln 2C^p2^{(1+n)(2-p)}|B_{R/2}|
\left[\gamma_2\frac{\psi\left(\omega/4
\right)}{\phi\left(\omega / 2^{m} \right)} \right]^{p-1}.
\end{split}
\end{equation}
On the other hand,  for the lower order term, by passing to the
limit $h\to 0$, we have
\begin{align*}
G_3
 \to G_3^* & :=  2\int_{-\hat{t}}^t\int_{B_{R/2}}\chi u f(u)
\Grad v\cdot\Grad u\bigl\{
\bigl(1+\varrho^+(u)\bigr)\bigl[(\varrho^+)'(u)\bigr]^2\zeta^p\bigr\}
 \, dx \, ds  \\
& \quad +2p\int_{-\hat{t}}^t\int_{B_{R/2}}\chi u f(u)\Grad v\cdot\Grad
\zeta\bigl\{\varrho^+(u)(\varrho^+)'(u)\zeta^{p-1}\bigr\} \, dx \, ds
 \\
& \leq  2M\int_{-\hat{t}}^t\int_{B_{R/2}} \bigl(1+\varrho^+(u) \bigr)
\bigl[(\varrho^+)'(u)\bigr]^2\zeta^p|\Grad u||\Grad v|\, dx \, ds \\
& \quad +2pM\int_{-\hat{t}}^t\int_{B_{R/2}}\varrho^+(u)
\bigl|(\varrho^+)'(u)\bigr|^{1-2/p'}|\Grad v||\Grad \zeta|
\bigl|(\varrho^+)'(u)\bigr|^{2/p'}\zeta^{p-1} \, dx \, ds .
\end{align*}
Applying Young's inequality \eqref{young} to the first term on the
right-hand side with
\begin{align*}
r=p, \quad  a=|\Grad u|, \quad  b=|\Grad v| \quad \text{and} \quad
 \epsilon_5=\biggl(\frac{p\psi\left(\omega/4\right)^{p-1}}
{M(1+n\ln2)}\biggr)^{1/p},
\end{align*}
and to the second term with
\begin{align*}
r=p, \quad
 a=\bigl| (\varrho^+)'(u)\bigr|^{1-2/p'}, \quad
 b=|\Grad v|\bigl| (\varrho^+)'(u)\bigr|^{2/p'}\zeta^{p-1} \quad
 \text{and} \quad
 \epsilon_6=1,
\end{align*}
we obtain
\begin{align*}
G_3^*
&\leq 2 \psi\left(\omega/4\right)^{p-1}
\int_{-\hat{t}}^t\int_{B_{R/2}}|\Grad u|^p\bigl[(\varrho^+)'(u)
\bigr]^2\zeta^p \, dx \, ds +2M \int_{-\hat{t}}^t\int_{B_{R/2}}\varrho^+(u)|\Grad\zeta|\bigl[(
\varrho^+)'(u)\bigr]^{2-p} \, dx \, ds \\
& \quad +2M\frac{p-1}{p}\biggl(\frac{p\psi\left(\omega/4\right)^{p-1}}
{M(1+n\ln2)}\biggr)^{1/(1-p)}\int_{-\hat{t}}^t\int_{B_{R/2}}\bigl(1+\varrho^+(u)\bigr)
\bigl[(\varrho^+)'(u)\bigr]^2\zeta^{p}|\Grad v|^{p'} \, dx \, ds \\
& \quad +2M(p-1)\int_{-\hat{t}}^t
\int_{B_{R/2}}\varrho^+(u)|\Grad \zeta||\Grad v|^{p'}\bigl[(
\varrho^+)'(u)\bigr]^2\zeta^{p} \, dx \, ds.
\end{align*}
Using  the estimates
\eqref{bbru_inter-aux-log1} and  \eqref{bbru_inter-aux-log2} and
the properties of $\zeta$, we then get
\begin{align*}
G_3^*  &\leq 2 \psi (\omega/4 )^{p-1}
\int_{-\hat{t}}^t\int_{B_{R/2}}|\Grad u|^p\bigl[(\varrho^+)'(u)
\bigr]^2\zeta^p \, dx \, ds +2M n\ln2 \frac{C}{R} \left(\frac{\omega}{2^{2+n}}
\right)^{p-2}\hat{t}|B_{R/2}| \\
& \quad +2M\frac{p-1}{p}\biggl(\frac{p\psi (\omega/4)^{p-1}}
{M(1+n\ln2)}\biggr)^{1/(1-p)}(1+n\ln2)\left(\frac{\omega}{2^{2+n}}
\right)^{-2}\int_{-\hat{t}}^t\int_{B_{R/2}}|\Grad v|^{p'}
\chi_{\{u>1-\omega/4\}} \, dx \, ds \\
& \quad +2M(p-1)n\ln2 \frac{C}{R} \left(
\frac{\omega}{2^{2+n}}\right)^{-2}\int_{-\hat{t}}^t\int_{B_{R/2}}
|\Grad v|^{p'}\chi_{\{u>1-\omega/4\}} \, dx \, ds.
\end{align*}
Then, applying H\"older's inequality and recalling the definition of
$\hat{t}$, we get
\begin{align*}
G_3^*
&\leq 2 \psi (\omega/4 )^{p-1}
\int_{-\hat{t}}^t\int_{B_{R/2}}|\Grad u|^p\bigl[(\varrho^+)'(u)
\bigr]^2\zeta^p \, dx \, ds +2MCn\ln2\,2^{(1+n)(2-p)}
\phi (\omega / 2^{m} )^{1-p}|B_{R/2}|R^{p-1} \\
& \quad + 2M(p-1)\biggl\{ \biggl(\frac{p\psi\left(\omega/4\right)^{p-1}}
{M(1+n\ln2)}\biggr)^{1/(1-p)}\frac{1+n\ln2}{p}+ \frac{C}{R} n\ln2\biggr\}
\left(\frac{\omega}{2^{2+n}}\right)^{-2} \times
\\& \quad \qquad \times
\|\Grad v\|^{p'}_{L^{p'p}(Q_T)}
\left(a_0R^p|B_{R/2}|\right)^{1-1/p}.
\end{align*}
In addition, thanks to Remark \ref{bbru_rem:pv}, we may estimate
\begin{gather*}
 \left(\frac{\omega}{2^{2+n}}\right)^{-2}\biggl(\frac{p^{-p'}
\psi\left(\omega/4\right)^{p-1}}
{M(1+n\ln2)}\biggr)^{1/(1-p)}a_0^{1-1/p} R^{p-1} \leq 1,\\
 C\left(\frac{\omega}{2^{2+n}}\right)^{-2}a_0^{1-1/p}R^{p-2} \leq
1,\quad
\phi\left(\frac{\omega}{2^{m}}\right)^{1-p}R^{p-1} \leq 1,
\end{gather*}
and this finally gives
\begin{align}\label{bbru_est-G3}
\begin{split}
G_3^*
& \leq 2 \psi\left(\omega/4\right)^{p-1}
\int_{-\hat{t}}^t\int_{B_{R/2}}|\Grad u|^p\bigl[(\varrho^+)'(u)
\bigr]^2\zeta^p\, dx \, ds +2MCn\ln2\,2^{(1+n)(2-p)}|B_{R/2}|\\
&\quad  + 2M(p-1)Cn\ln2\|\Grad v\|^{p'}_{L^{p'p}(Q_T)}
|B_{R/2}|^{1-1/p}.
\end{split}
\end{align}

Combining estimates \eqref{bbru_est-G1}, \eqref{bbru_est-G2} and
\eqref{bbru_est-G3} yields
\begin{align*}
 \int_{B_{R/2}\times\{t\}}\bigl\{\varrho^+(u)\bigr\}^2
\zeta^p \, dx \, ds &  \leq  2M(p-1)Cn\ln2\|\Grad v\|^{p'}_{L^{p'p}(Q_T)}
|B_{R/2}|^{1-1/p}\\
&\quad + (1-\gamma_1^{p-1})2\left[\psi\left(\omega/4\right)\right]^{p-1}
\int_{-\hat{t}}^t\int_{B_{R/2}}|\Grad u|^p\bigl[(\varrho^+)'(u)
\bigr]^2\zeta^p\, dx \, ds \\
&\quad  +2n\ln2\,2^{(1+n)(2-p)}|B_{R/2}|
\biggl\{ MC+(p-1)C^p\gamma_2^{p-1}\biggl[\frac{\psi\left(\omega/4
\right)}{\phi\left(\omega/ 2^{m} \right)} \biggr]^{p-1}\biggr\},
\end{align*}
and since $\gamma_1>1$ and  $n>0$, this implies
\begin{align}\label{bbru_swq1}
\begin{split}
 \sup_{-\hat{t}\leq t\leq0}\int_{B_{R/2}\times\{t\}}\bigl\{\varrho^+(u)\bigr\}^2
\zeta^p\, \dx &  \leq   2M(p-1)Cn\ln2\|\Grad v\|^{p'}_{L^{p'p}(Q_T)}
|B_{R/2}|^{1-\frac{1}{p}}\\
&\quad +2n\ln2\,2^{2-p}|B_{R/2}|
\biggl\{MC+(p-1)C^p\gamma_2^{p-1}\biggl[\frac{\psi\left(\omega/4
\right)}{\phi\left(\omega / 2^{m} \right)} \biggr]^{p-1}\biggr\}.
\end{split}
\end{align}
Since the integrand in the left-hand side
 of \eqref{bbru_swq1} is nonnegative,  the integral
  can  be estimated from
below by integrating over the smaller set $S=\{x\in B_{R/2} \, : \,  u(x,t)\geq 1-
\omega / 2^{2+n} \}\subset B_{R/2}$.  Thus, noticing that
\begin{align*}
\zeta=1 \quad \text{and} \quad
\bigl\{\varrho^+(u)\bigr\}^2\geq \bigl( \ln(2^{n-1})\bigr) ^2
=(n-1)^2(\ln 2)^2 \quad \text{on $S$,}
\end{align*}
we obtain that \eqref{bbru_swq1} reads
\begin{align*}
& \bigl|\bigl\{x\in B_{R/2}\, : \,  u(x,t)\geq 1-\omega / 2^{2+n}
\bigr\}\bigr| \\
& \leq
\frac{2Cn|B_{R/4}|}{(n-1)^2\ln2}\biggl\{2^{2-p}\biggl[MC+(p-1)C^p\gamma_2^{p-1}
\biggl[\frac{\psi\left(\omega/4
\right)}{\phi\left(\omega / 2^{m} \right)} \biggr]^{p-1}\biggr]+M(p-1)
\|\Grad v\|^{p'}_{L^{p'p}(Q_T)}\biggr\}
\end{align*}
for all $t\in (-\hat{t},0)$. To prove the lemma we just need to choose $s_1$ depending on
 $\nu_1$ such that $s_1=2+n$ with
 $$n>1+\frac{2C}{\nu_1\ln2}\biggl\{2^{2-p}\biggl[MC+(p-1)C^p\gamma_2^{p-1}
 \biggl[\frac{\psi\left(\omega/4
\right)}{\phi\left(\omega/2^{m} \right)} \biggr]^{p-1}\biggr]+M(p-1)
\|\Grad v\|^{p'}_{L^{p'p}(Q_T)}\biggr\},$$
since if $n\geq1+2 / \alpha$ then $n / (n-1)^2 \leq\alpha$, $\alpha>0$.
Furthermore, $s_1$ is independent of $\omega$ because
$$\biggl[\frac{\psi\left(\omega/4\right)}{\phi\left(\omega / 2^{m} \right)}
\biggr]^{p-1}=\biggl[\frac{\left(\omega / 4  \right)^{\beta_2/(p-1)}}
{\left(\omega / 2^{m} \right)^{\beta_1 / (p-1)}}\biggr]^{(p-1)}=\omega^{\beta_2-\beta_1}
2^{m\beta_1-2\beta_2}\leq 2^{m\beta_1-2\beta_2}.$$
The  last inequality holds since  $\beta_2>\beta_1$.
\end{proof}
Now, the first alternative is established by the following proposition.
\begin{proposition}\label{bbru_reduction-osc}
The numbers $\nu_1\in(0,1)$ and $s_1\gg 1$ can be chosen a priori
independently of $\omega$ and $R$, such that if
\eqref{bbru_1st-alt-relation} holds, then
\begin{align*}
u(x,t)<\frac{\omega}{2^{s_1+1}} \quad
\text{\em  a.e. in  $Q(\hat{t},R/8)$.}
\end{align*}
\end{proposition}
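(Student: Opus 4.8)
The plan is to run a De~Giorgi iteration in the \emph{full} cylinder $Q(\hat t,R/8)$, starting from the pointwise-in-time measure bound of the preceding lemma and truncating $u$ at the level $1-\omega/2^{s_1+1}$ so as to stay clear of the degeneracy of $a$ at $u=1$; the conclusion thereby obtained is that $u<1-\omega/2^{s_1+1}$ a.e.\ in $Q(\hat t,R/8)$. For $n\in\mathbb{N}_0$ I would set
\[
\rho_n:=\frac R8+\frac{R}{2^{n+3}},\qquad k_n:=1-\frac{\omega}{2^{s_1+1}}-\frac{\omega}{2^{s_1+1+n}},
\]
so that $\rho_n\downarrow R/8$ with $B_{\rho_0}=B_{R/4}$, while $k_n\uparrow 1-\omega/2^{s_1+1}$ with $k_0=1-\omega/2^{s_1}$; put $Q_n:=Q(\hat t,\rho_n)=B_{\rho_n}\times(-\hat t,0)$ and take piecewise smooth cutoffs $\zeta_n=\zeta_n(x)$, $\zeta_n\equiv1$ on $B_{\rho_{n+1}}$, $\zeta_n\equiv0$ outside $B_{\rho_n}$, $|\Grad\zeta_n|\le C2^n/R$. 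No cutoff in time is needed: by Lemma~\ref{bbru_1st-alt-lemma}, $u(\cdot,-\hat t)<1-\omega/4\le k_n$ a.e.\ on $B_{R/2}$ (recall $s_1\gg1$), so the boundary term at $t=-\hat t$ drops out; and, unlike in the proof of Lemma~\ref{bbru_1st-alt-lemma}, no rescaling of time is required, since the cylinder height is already frozen and the parabolic embedding is insensitive to the cylinder's dimensions.

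The heart of the matter is the energy estimate obtained by testing \eqref{bbru_weak-steklov} with $\varphi=[(u_\omega)_h-k_n]^+\zeta_n^p$ on $K=B_{\rho_n}$, where $u_\omega:=\min\{u,1-\omega/2^{s_1+1}\}$, integrating over $(-\hat t,t)$ for $t\in(-\hat t,0)$, and letting $h\to0$. This repeats, essentially verbatim, the computation in the proof of Lemma~\ref{bbru_1st-alt-lemma} with $1-\omega/4$ replaced everywhere by $1-\omega/2^{s_1+1}$: the parabolic term produces (after $([u_\omega-k_n]^+)^2\ge([u_\omega-k_n]^+)^{2-p}([u_\omega-k_n]^+)^p$ and $[u_\omega-k_n]^+\le\omega/2^{s_1+1}$) the term $\esssup_t\int_{B_{\rho_n}}([u_\omega-k_n]^+)^p\zeta_n^p\dx$ plus an error controlled by the measure of $\{u_\omega\ge k_n\}$; the diffusion term is bounded below using that $\Grad(u_\omega-k_n)^+$ is supported in $\{k_n<u<1-\omega/2^{s_1+1}\}$, where $\gamma_1\psi(\omega/2^{s_1+1})\le a(u)\le\gamma_2\psi(\omega/2^{s_1})$, together with the same Young splitting of the $\Grad\zeta_n$ cross term as in \eqref{bbru_eq:F2B} (the ratio $\psi(\omega/2^{s_1})/\psi(\omega/2^{s_1+1})=2^{\beta_2/(p-1)}$ being $\omega$-independent, just like $\psi(\omega/2)/\psi(\omega/4)$ there); and the chemotactic term is absorbed by Young's and H\"older's inequalities exactly as in Lemma~\ref{bbru_1st-alt-lemma}, yielding a contribution with $\|\Grad v\|_{L^{p'p}(Q_T)}^{p'}$ and a fractional power of the measure. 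Setting
\[
A_n:=\bigl|\{(x,t)\in Q_n:u_\omega(x,t)\ge k_n\}\bigr|,\qquad X_n:=\frac{A_n}{|Q_n|},\qquad Z_n:=\frac{A_n^{1/p}}{|B_{\rho_n}|},
\]
and applying the embedding $V^p(Q_n)\hookrightarrow L^{p(N+p)/N}(Q_n)$ (cf.\ \cite[\S I.3]{DiBe}) to $[u_\omega-k_n]^+\zeta_n$, one arrives --- as in the passage from \eqref{bbru_eq:concise} to the recursion following \eqref{bbru_eq:in-geo2} --- at
\[
X_{n+1}\le\gamma2^{pn}\bigl(X_n^{1+\alpha}+X_n^{\alpha}Z_n^{1+\kappa}\bigr),\qquad Z_{n+1}\le\gamma2^{pn}\bigl(X_n+Z_n^{1+\kappa}\bigr),\qquad n\in\mathbb{N}_0,
\]
with $\alpha=p/(N+p)$, $\kappa=p-2>0$ and a constant $\gamma>1$ which, after invoking Remark~\ref{bbru_rem:pv} to absorb the $\omega$- and $R$-dependent factors --- now built from $\psi(\omega/2^{s_1+1})$, powers of $\omega/2$, and powers of $a_0$ and $\hat t\le a_0R^p$ --- is independent of $\omega$ and $R$.

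To launch the iteration I would use the preceding lemma: for every $t\in(-\hat t,0)$, $|\{x\in B_{R/4}:u(x,t)\ge1-\omega/2^{s_1}\}|\le\nu_1|B_{R/2}|$; integrating in $t$ and again normalizing the $R$- and $\omega$-powers via Remark~\ref{bbru_rem:pv}, this gives $X_0+Z_0^{1+\kappa}\le C\nu_1$ with $C$ depending only on the data. It then suffices to fix $\nu_1$ --- and, through the preceding lemma, $s_1$ --- so small that $C\nu_1\le(2\gamma)^{-(1+\kappa)/\sigma}(2^p)^{-(1+\kappa)/\sigma^2}$ with $\sigma=\min\{\alpha,\kappa\}$; since $\gamma,\alpha,\kappa$ are independent of $\omega$ and $R$, so are $\nu_1$ and $s_1$. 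Lemma~\ref{bbru_geometric} then gives $X_n,Z_n\to0$, i.e.\ $A_n\to0$; since $\rho_n\to R/8$ and $k_n\to1-\omega/2^{s_1+1}$, this says $|\{(x,t)\in Q(\hat t,R/8):u_\omega\ge1-\omega/2^{s_1+1}\}|=0$, which by the definition of $u_\omega$ is precisely $u<1-\omega/2^{s_1+1}$ a.e.\ in $Q(\hat t,R/8)$.

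The step I expect to be most delicate is the diffusion estimate: one must check that raising the truncation to $1-\omega/2^{s_1+1}$ still confines $\Grad(u_\omega-k_n)^+$ to the set where $a(u)\ge\gamma_1\psi(\omega/2^{s_1+1})>0$, and then carry the extra powers of $\omega$ (and of the now-fixed integer $2^{s_1}$) through the Young-inequality splitting so that each resulting coefficient has the form $B_iR^\theta\omega^{-b_i}$ of Remark~\ref{bbru_rem:pv} and may be assumed $\le1$ --- the $s_1$-dependence being harmless because $s_1$ is chosen before $\omega$ and $R$. A secondary bookkeeping point is to verify that the frozen height $\hat t\le a_0R^p$, which here plays the role of the rescaled height $R_n^p$ of Lemma~\ref{bbru_1st-alt-lemma}, enters the embedding and the recursion only through such absorbable factors.
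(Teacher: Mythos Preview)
Your proposal is correct and follows exactly the approach the paper indicates (the paper omits the proof, referring to \cite[Lemma~3.3]{DiBe} and \cite{Urb} together with the technique of Lemma~\ref{bbru_1st-alt-lemma}): a De~Giorgi iteration in the full cylinder $Q(\hat t,R/8)$ with purely spatial cutoffs, initialized via the measure bound from the preceding logarithmic lemma and closed by Lemma~\ref{bbru_geometric}. Note that the conclusion you actually establish, $u<1-\omega/2^{s_1+1}$ a.e.\ in $Q(\hat t,R/8)$, is the correct one (and is precisely what Corollary~\ref{bbru_coro-1st} uses); the displayed statement of the Proposition has a typo omitting the leading $1-$.
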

We omit the proof of Proposition \ref{bbru_reduction-osc} because it is based on the
argument of \cite[Lemma 3.3]{DiBe} and \cite{Urb}, and we may use 
  for the
extension  the same
technique applied in the proof of Lemma~\ref{bbru_1st-alt-lemma}.
\begin{corollary}\label{bbru_coro-1st}
There exist numbers $\nu_0,\sigma_0\in(0,1)$
independent of $\omega$ and $R$ such that if \eqref{bbru_1st-alt-relation} holds,
then
$$\essosc_{Q(\hat{t},R/8)} u\leq \sigma_0 \omega.$$
\end{corollary}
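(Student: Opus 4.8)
The plan is to deduce the corollary directly from Proposition~\ref{bbru_reduction-osc}; essentially all the work has already been carried out in Lemma~\ref{bbru_1st-alt-lemma}, in the intervening logarithmic lemma, and in Proposition~\ref{bbru_reduction-osc} itself, so what remains is pure bookkeeping. First I would fix $\nu_1\in(0,1)$ depending only on the data and let $s_1=s_1(\nu_1)\in\N$ be the integer supplied by Proposition~\ref{bbru_reduction-osc} (equivalently, by the preceding logarithmic lemma); recall that $s_1$ was shown there to be independent of $\omega$ and $R$, and that the threshold $\nu_0$ coming from Lemma~\ref{bbru_1st-alt-lemma} enjoys the same independence. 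Assuming \eqref{bbru_1st-alt-relation}, Proposition~\ref{bbru_reduction-osc} then gives $u(x,t)<\omega/2^{s_1+1}$ for a.e.\ $(x,t)\in Q(\hat{t},R/8)$, whence
\[
\esssup_{Q(\hat{t},R/8)}u\le\frac{\omega}{2^{s_1+1}}.
\]

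Next I would invoke the nonnegativity of $u$: since $u$ takes values in $[0,1]$ (Definition~\ref{bbru_def1}, cf.\ the maximum principle in Lemma~\ref{bbru_maxprinc-est_v_weak}), we have $\essinf_{Q(\hat{t},R/8)}u\ge0$. Since moreover $Q(\hat{t},R/8)\subset Q(R^{p-\eps},2R)$ --- because $R/8<2R$ and $\hat{t}\le a_0R^p<R^{p-\eps}$, by \eqref{bbru_this} together with the standing assumption \eqref{bbru_R-e} --- the essential oscillation of $u$ over $Q(\hat{t},R/8)$ is in particular at most $\omega$; combining this with the previous display,
\[
\essosc_{Q(\hat{t},R/8)}u=\esssup_{Q(\hat{t},R/8)}u-\essinf_{Q(\hat{t},R/8)}u\le\frac{\omega}{2^{s_1+1}}.
\]

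Finally I would set $\sigma_0:=2^{-(s_1+1)}$, which lies in $(0,1)$ and, through $s_1$, depends only on the data and not on $\omega$ or $R$; together with the already-fixed $\nu_0$ this is exactly the assertion. I do not expect any genuine obstacle at this stage --- the content is entirely in Proposition~\ref{bbru_reduction-osc} --- and the only point deserving even minor attention is precisely the uniformity of $s_1$, and hence of $\sigma_0$, with respect to $\omega$ and $R$, which is already built into the statements of Lemma~\ref{bbru_1st-alt-lemma} and Proposition~\ref{bbru_reduction-osc}.
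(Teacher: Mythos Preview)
Your argument takes Proposition~\ref{bbru_reduction-osc} at face value, but the displayed conclusion there is a misprint. The entire first alternative---Lemma~\ref{bbru_1st-alt-lemma}, the logarithmic lemma, and Proposition~\ref{bbru_reduction-osc}---works with levels of the form $1-\omega/2^{j}$ and is designed to push the \emph{supremum} of $u$ strictly below~$1$; the intended outcome of the proposition is
\[
u(x,t)<1-\frac{\omega}{2^{s_1+1}}\quad\text{a.e.\ in }Q(\hat t,R/8),
\]
not $u(x,t)<\omega/2^{s_1+1}$. The paper's own proof of the corollary confirms this reading: it begins from $\esssup_{Q(\hat t,R/8)}u\le 1-\omega/2^{s_1+1}$.

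With the proposition read correctly, your route via the global lower bound $u\ge 0$ collapses: it yields only $\essosc_{Q(\hat t,R/8)}u\le 1-\omega/2^{s_1+1}$, which for small~$\omega$ is essentially~$1$ and is certainly not bounded by $\sigma_0\omega$ for any fixed $\sigma_0<1$. The paper proceeds differently. From the inclusion $Q(\hat t,R/8)\subset Q(R^{p-\eps},2R)$ that you already verified it bounds the infimum from below by~$\mu^-$ (not by~$0$), and then computes
\[
\essosc_{Q(\hat t,R/8)}u\le\Bigl(1-\frac{\omega}{2^{s_1+1}}\Bigr)-\mu^-\le\Bigl(1-\frac{1}{2^{s_1+1}}\Bigr)\omega,
\]
so that $\sigma_0=1-2^{-(s_1+1)}$ rather than your $2^{-(s_1+1)}$. (The last inequality uses $1-\mu^-\le\omega$, i.e.\ $\mu^+=1$; this is the situation in which the first-alternative analysis near the degeneracy at $u=1$ is actually in force, and is the standing assumption along this branch of the argument.) The bookkeeping is therefore as short as you anticipated, but it must start from the corrected upper bound on~$u$ and use~$\mu^-$, not~$0$, for the lower bound.
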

\begin{proof}
In light of Proposition \ref{bbru_reduction-osc}, we know that there
exists a number  $s_1$ such that
$$\esssup_{Q(\hat{t},R/8)}u\leq 1 -\frac{\omega}{2^{s_1+1}},$$
and this yields
$$\essosc_{Q(\hat{t},R/8)}u=\esssup_{Q(\hat{t},R/8)}u-
\essinf_{Q(\hat{t},R/8)}u\leq \left(1
-\frac{1}{2^{s_1+1}}\right)\omega.$$ In this way, choosing
$\sigma_0=1 -1/2^{s_1+1}$, which is independent of $\omega$, we
complete the proof.
\end{proof}
\subsection{The second alternative}
Let us suppose now that \eqref{bbru_1st-alt-relation} does not hold. Then the complementary case
is valid and for every cylinder $Q_R^{t^*}$ we have
\begin{equation}\label{bbru_2nd-alt-relation}
\bigl|\bigl\{(x,t)\in Q_R^{t^*} \,  : \,   u(x,t)<\omega/ 2 \bigr\}\bigr|\leq
(1-\nu_0)\bigl|Q_R^{t^*}\bigr|.
\end{equation}
Following an analogous analysis to the performed in the case in
which the solution is near its degeneracy at one, a similar
conclusion is obtained for the second alternative (cf. \cite{bku}
and \cite{Urb}). Specifically, we first use logarithmic estimates to
extend the result to a full cylinder and then we conclude that the
solution is essentially away from 0 in a cylinder $Q(\tau,\rho)$. In
this way we prove  the following corollary.
\begin{corollary}\label{bbru_coro-2nd}
Let  $\tilde{t}$ denote the second-alternative-counterpart of $\hat{t}$.
Then there exists $\sigma_1\in(0,1)$, depending only on the data, such that
\begin{align*}
\essosc_{Q(\tilde{t},R/8)}u\leq\sigma_1\omega.
\end{align*}
\end{corollary}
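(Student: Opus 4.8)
The plan is to carry out, near the degeneracy of \eqref{bbru_eq1.1a} at $u=0$, the exact symmetric counterpart of the three-step chain used for the first alternative. One replaces everywhere the function $\psi$ by $\phi$ (on a level set $\{u<\omega/2^{s}\}$ with $\omega/2^{s}\le\delta$ one has $\gamma_1\phi(u)\le a(u)\le\gamma_2\phi(u)$ by \eqref{bbru_cond-a(u)}), the truncation $u_\omega=\min\{u,1-\omega/4\}$ by $u_\omega=\max\{u,\omega/4\}$, the increasing levels $k_n\uparrow1-\omega/4$ by decreasing levels $k_n\downarrow\omega/2^{s_1+1}$, and the logarithmic test function $\varrho^+$ by its companion $\varrho^-$ from \eqref{bbru_rhoplusdef}. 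The goal is to show that $u$ stays above a fixed multiple of $\omega$ a.e.\ in the smaller cylinder $Q(\tilde t,R/8)$; the oscillation decay then follows exactly as in Corollary~\ref{bbru_coro-1st}.

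First I would extract a ``good'' time level out of the measure information: since \eqref{bbru_2nd-alt-relation} holds for \emph{every} admissible $t^*$, an averaging (Fubini) argument produces, for each such cylinder, a time $t_0$ with $|\{x\in B_R:u(x,t_0)<\omega/2\}|\le(1-\nu_0)|B_R|$, so that at that instant $u\ge\omega/2$ on a portion of $B_R$ of measure at least $\nu_0|B_R|$, a bound independent of $\omega$ and $R$. Using this slice as an initial datum and the logarithmic function $\Psi^-$ associated with $\varrho^-$ via \eqref{bbru_log-function}, I would then reproduce the computation of the (unnumbered) logarithmic lemma preceding Proposition~\ref{bbru_reduction-osc}, with the substitutions listed above: the diffusive term is bounded from below through $a(u)\ge[\gamma_1\phi(\omega/2^{s})]^{p-1}$ on the relevant set, the chemotactic term is absorbed by Young's and Hölder's inequalities as in the treatment of $G_3^*$ (so that $\|\Grad v\|_{L^{p'p}(Q_T)}$ reappears in the constants), and the time span is controlled by the analogue of \eqref{bbru_this}, namely $\tilde t\le a_0R^p=(\omega/2)^{2-p}R^p\phi(\omega/2^m)^{-(p-1)}$. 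This yields, for each $\nu_1\in(0,1)$, an integer $s_1$ independent of $\omega$ and $R$ such that $|\{x\in B_{R/4}:u(x,t)\le\omega/2^{s_1}\}|\le\nu_1|B_{R/2}|$ for all $t\in(-\tilde t,0)$, the relevant cancellation being $[\phi(\omega/2^{s})/\phi(\omega/2^m)]^{p-1}=2^{(m-s)\beta_1}$, which does not depend on $\omega$.

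With this smallness of the bad set at every time level in hand, the last step is a De Giorgi iteration over the levels $k_n$: taking $\varphi=-\bigl(k_n-(u_\omega)_h\bigr)^+\xi_n^p$ in \eqref{bbru_weak-steklov}, with nested cutoffs $\xi_n$ shrinking from $B_{R/4}$ to $B_{R/8}$, integrating in time, and invoking the parabolic embedding of \cite[\S I.3]{DiBe} together with the geometric-convergence Lemma~\ref{bbru_geometric} — the chemotactic contribution being handled once more as in the derivation of \eqref{bbru_aux_F3} — one upgrades the smallness to the vanishing of the bad set in the shrunken cylinder, obtaining $u\ge\omega/2^{s_1+1}$ a.e.\ in $Q(\tilde t,R/8)$ (this is the analogue of Proposition~\ref{bbru_reduction-osc}). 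Hence $\essinf_{Q(\tilde t,R/8)}u\ge\omega/2^{s_1+1}$ whereas $\esssup_{Q(\tilde t,R/8)}u\le\mu^+$ (and $\mu^+\le\omega$ in the case relevant here), so $\essosc_{Q(\tilde t,R/8)}u\le(1-2^{-(s_1+1)})\omega=:\sigma_1\omega$ with $\sigma_1\in(0,1)$ depending only on the data. Since all these passages are structurally identical to the ones already written out, they would be omitted, as in \cite{bku,Urb}.

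The step I expect to be the main obstacle is, once more, the precise bookkeeping of the powers of $\omega$ and $R$ in the logarithmic stage: near $u=0$ the diffusion is bounded below only by $\phi$ evaluated at the small level $\omega/2^{s_1}$, while the time scale of the cylinder is dictated by $a_0$, which involves $\phi(\omega/2^m)$ with the still-free parameter $m$. One must verify that every genuinely $\omega$- and $R$-dependent factor coming from the energy inequality and from the lower-order term can be absorbed, by Remark~\ref{bbru_rem:pv}, into a quantity bounded by $1$ (otherwise $\omega\le CR^\eps$ and the corollary is trivial). Because $\phi$ is a pure power and $p>2$, these exponents balance exactly as in \cite{Urb}, so no genuinely new difficulty arises; only the numerical values of $s_1$, $\tilde t$ and $\sigma_1$ change with respect to the first alternative.
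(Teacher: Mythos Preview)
Your proposal is correct and follows precisely the approach the paper itself adopts: the paper omits the proof of Corollary~\ref{bbru_coro-2nd} entirely, stating only that ``following an analogous analysis to the performed in the case in which the solution is near its degeneracy at one, a similar conclusion is obtained for the second alternative (cf.\ \cite{bku} and \cite{Urb})''; your outline is exactly that symmetric argument, with the correct substitutions $\psi\leadsto\phi$, $\varrho^+\leadsto\varrho^-$, upper truncation $\leadsto$ lower truncation, and the Fubini extraction of a good time slice replacing the initial De~Giorgi step of Lemma~\ref{bbru_1st-alt-lemma}. Your identification of the key $\omega$-independence check, namely that the ratio $[\phi(\omega/2^{s})/\phi(\omega/2^{m})]^{p-1}=2^{(m-s)\beta_1}$ carries no residual power of $\omega$ (unlike the mixed $\psi/\phi$ ratio of the first alternative, which needed $\beta_2>\beta_1$), is the correct structural observation.
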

Since \eqref{bbru_1st-alt-relation} or \eqref{bbru_2nd-alt-relation}
must be valid,
the conclusion of Corollary \ref{bbru_coro-1st} or \ref{bbru_coro-2nd}
must
hold. Thus, choosing
$\sigma=\max\{\sigma_0,\sigma_1\}$ and $
t^\diamond=\min\{\hat{t},\tilde{t}\}$,
 we obtain the following proposition.
\begin{proposition}
There exists a constant $\sigma\in(0,1)$, depending only on the data, such that
$$\essosc_{Q\left(t^\diamond,R/8\right)}u\leq\sigma\omega.$$
\end{proposition}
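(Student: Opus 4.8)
The plan is to combine the two alternatives into a single reduction-of-oscillation statement by a straightforward dichotomy argument. We are given a fixed point $(x_0,t_0)\in Q_T$, normalized to the origin, together with the cylinder $Q(a_0R^p,R)$ and the family of subcylinders $Q_R^{t^*}$ constructed above. The key observation is that for the particular subcylinder $Q_R^{t^*}$ appearing in condition \eqref{bbru_1st-alt-relation}, either \eqref{bbru_1st-alt-relation} holds for \emph{some} choice of $t^*$ in the admissible range, or it fails for \emph{every} such $t^*$; in the latter case the complementary estimate \eqref{bbru_2nd-alt-relation} holds for all cylinders $Q_R^{t^*}$. These two possibilities are mutually exclusive and exhaustive, so one of Corollary~\ref{bbru_coro-1st} or Corollary~\ref{bbru_coro-2nd} applies.

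First I would invoke the first alternative: if there is a cylinder $Q_R^{t^*}$ for which \eqref{bbru_1st-alt-relation} holds, then by Corollary~\ref{bbru_coro-1st} there exist $\nu_0,\sigma_0\in(0,1)$, independent of $\omega$ and $R$, with $\essosc_{Q(\hat t,R/8)}u\leq\sigma_0\omega$. Otherwise, \eqref{bbru_2nd-alt-relation} holds for every $Q_R^{t^*}$, and by Corollary~\ref{bbru_coro-2nd} there is $\sigma_1\in(0,1)$, depending only on the data, with $\essosc_{Q(\tilde t,R/8)}u\leq\sigma_1\omega$. Setting $\sigma:=\max\{\sigma_0,\sigma_1\}\in(0,1)$ and $t^\diamond:=\min\{\hat t,\tilde t\}$, the smaller cylinder $Q(t^\diamond,R/8)$ is contained in whichever of $Q(\hat t,R/8)$, $Q(\tilde t,R/8)$ is relevant, so $\essosc_{Q(t^\diamond,R/8)}u\leq\essosc_{Q(\hat t,R/8)}u$ (resp.\ with $\tilde t$) $\leq\sigma\omega$ in either case. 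This is exactly the claimed estimate, with $\sigma$ depending only on the data and not on $\omega$ or $R$.

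The main obstacle here is essentially bookkeeping rather than analysis: one must check that the time levels $\hat t$ and $\tilde t$ arising in the two alternatives are comparable enough that $t^\diamond=\min\{\hat t,\tilde t\}$ still yields a cylinder of the right intrinsic shape, and that the radius reduction from $R$ to $R/8$ is consistent between the two cases. Both $\hat t$ and $\tilde t$ were defined (via the construction of the cylinders $Q_R^{t^*}$ and the use of the time level $-\hat t=t^*-d(R/2)^p$, and its second-alternative counterpart) in terms of the same data-dependent constants, so taking the minimum only shrinks the cylinder and all prior estimates remain valid on the smaller set. The constants $\nu_0$, $\sigma_0$, $\sigma_1$ have already been shown in Corollaries~\ref{bbru_coro-1st} and~\ref{bbru_coro-2nd} to be independent of $\omega$ and $R$, which is the crucial quantitative point; the present proposition merely packages these two conclusions into one, and the verification that the dichotomy is exhaustive is immediate since \eqref{bbru_1st-alt-relation} and \eqref{bbru_2nd-alt-relation} are logical negations of one another (up to the choice of $t^*$). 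Hence the proof is short and requires no new estimates.
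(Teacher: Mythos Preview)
Your argument is correct and matches the paper's own reasoning essentially verbatim: the paper simply observes that either \eqref{bbru_1st-alt-relation} or \eqref{bbru_2nd-alt-relation} must hold, invokes Corollaries~\ref{bbru_coro-1st} and~\ref{bbru_coro-2nd}, and sets $\sigma=\max\{\sigma_0,\sigma_1\}$, $t^\diamond=\min\{\hat t,\tilde t\}$. Your write-up is just a more detailed version of this one-line dichotomy, with no substantive difference.
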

The local H\"older continuity of $u$ in $Q_T$ now follows
(see, e.g., \cite{DiBe}, \cite{Urb-book}, or the proof of
\cite[Th. 2]{DiB:current}).
\section{Numerical examples}\label{bbru_Sec:example}
\begin{figure}[t]
\begin{center}
\begin{tabular}{cc}
\includegraphics[width=0.35\textwidth]{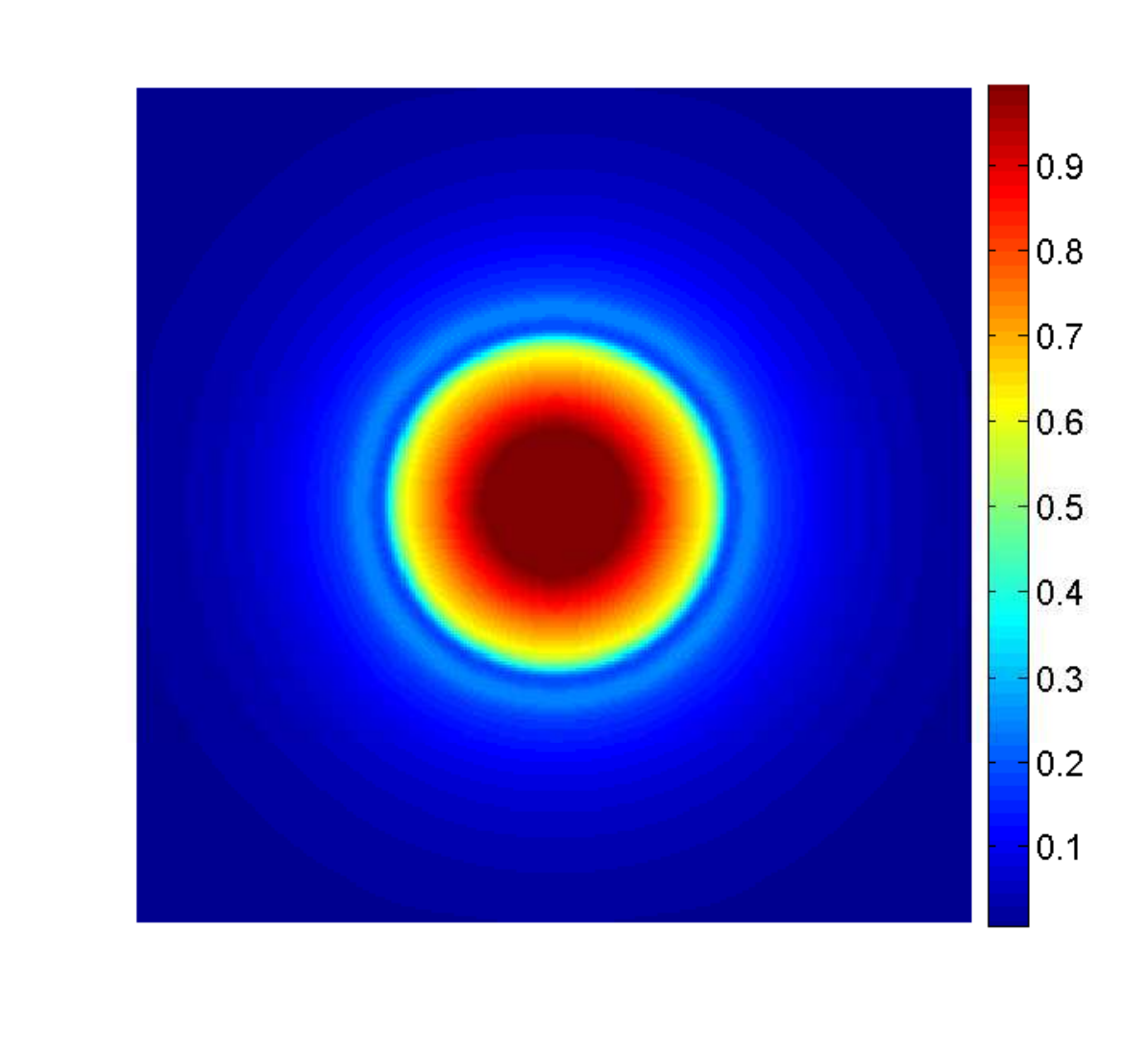}&
\includegraphics[width=0.35\textwidth]{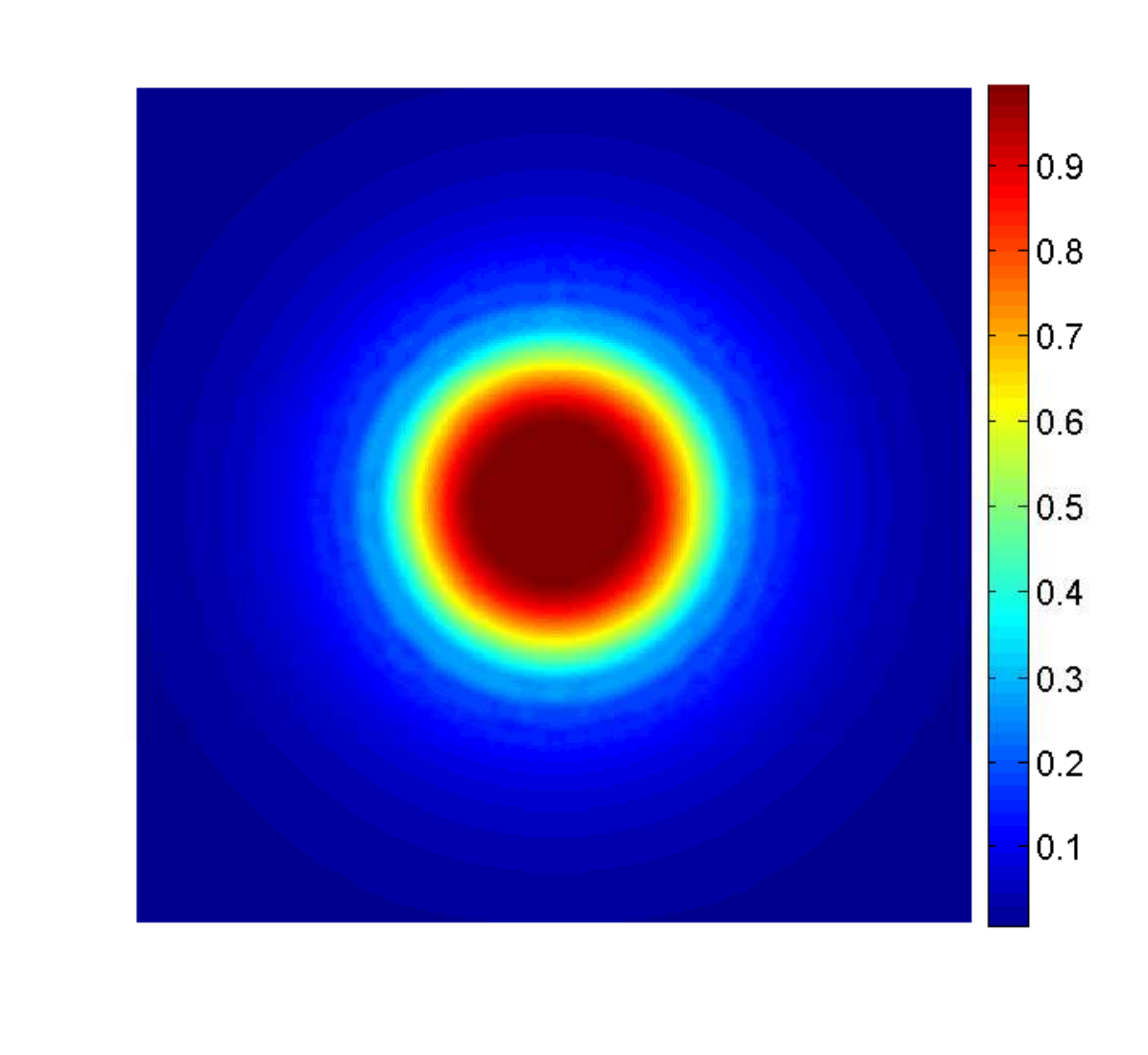}\\
\includegraphics[width=0.35\textwidth]{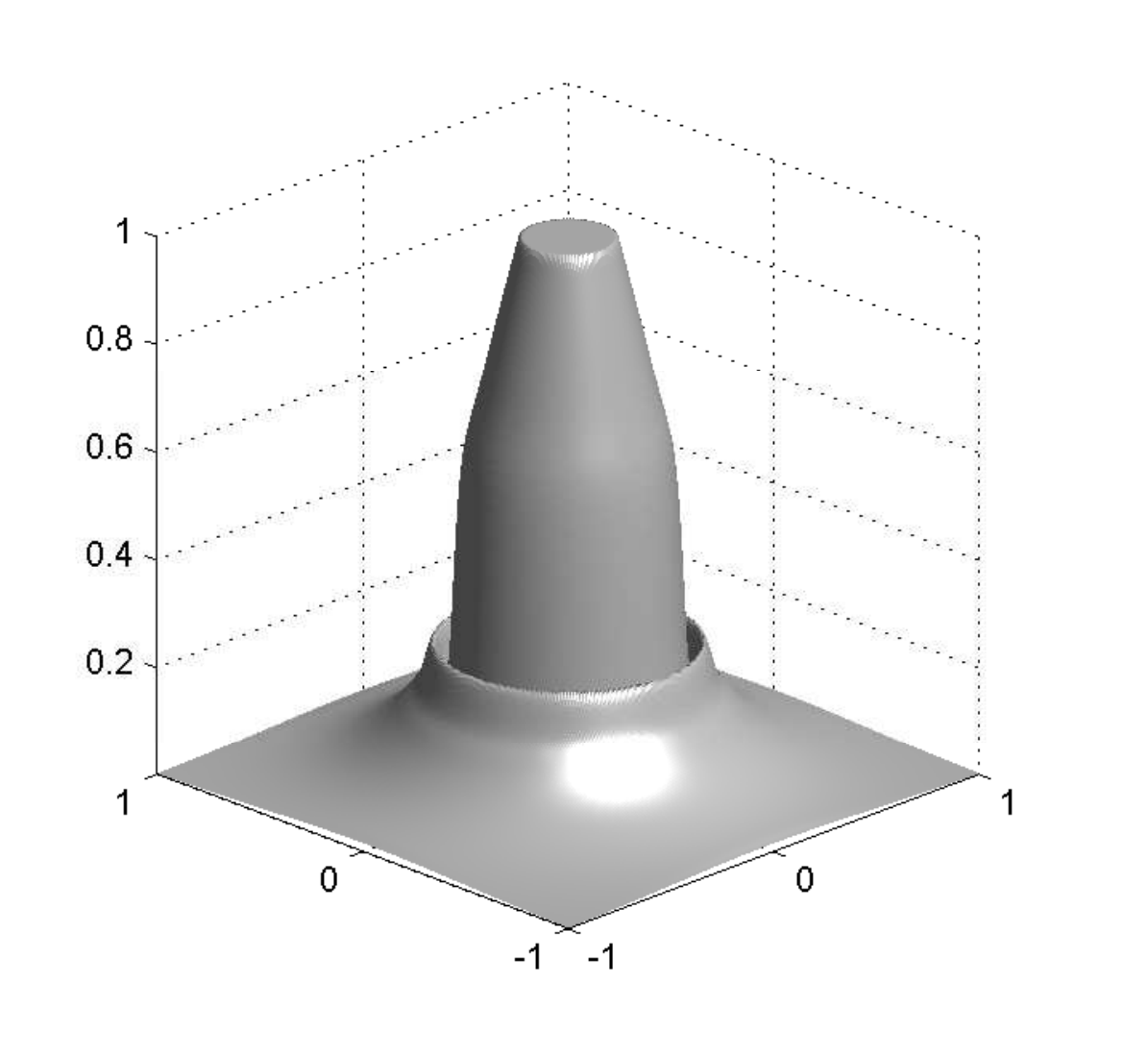}&
\includegraphics[width=0.35\textwidth]{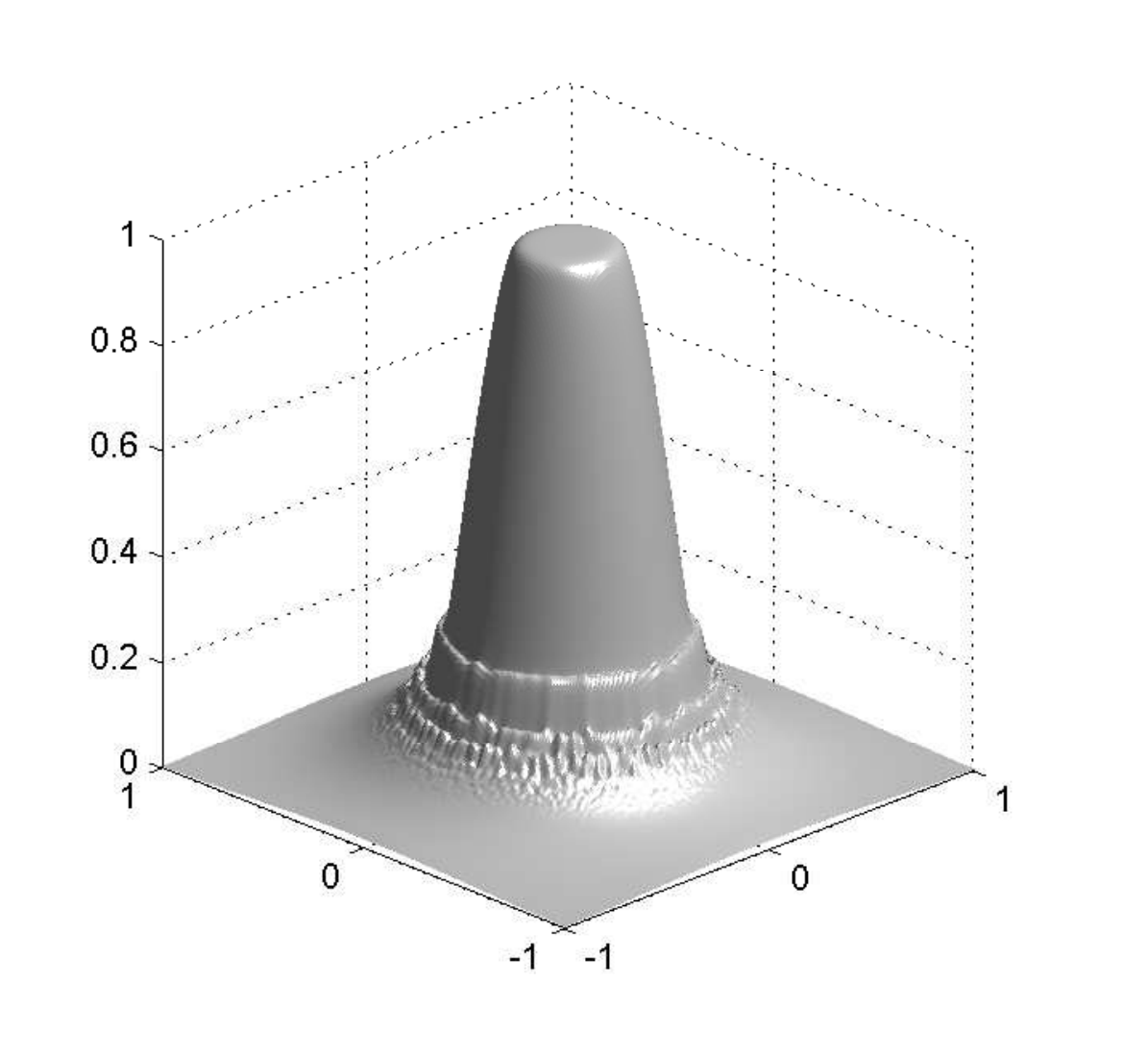}
\end{tabular}
\end{center}
\vspace{-.35cm}
\caption{Example~1: Numerical solution for species $u$, at $t=1.0$
 for $p=2$ (left), and $p=6$ (right).} \label{bbru_fig:ex1}
\end{figure}

In this section, we provide two numerical examples to illustrate how
the approximate solutions of the chemotaxis
model~\eqref{bbru_S1-S2-S3} vary when changing the parameter $p$
from standard nonlinear diffusion ($p=2$) to doubly nonlinear
diffusion ($p>2$). For the discretization of both examples, a
standard first order finite volume method (see the Appendix for
details on the numerical scheme) on a regular mesh of 262144 control
volumes is used. We choose a simple square domain  $\Om=[-1,1]^2$
and use the functions  $a(u)=\epsilon u(1-u)$, $f(u)=(1-u)^2$ and
$g(u,v)=\alpha u-\beta v$, along with parameters that are indicated
separately for each case.

\subsection{Example~1} For the first example, we choose
$\epsilon=0.01$, $\alpha=40$, $\beta=160$, $\chi=0.2$ and $d=0.05$.
The initial condition for the species density is given by
$$u_0(x)=\begin{cases}
1& \text{for $\|x\|\leq 0.2$,}\\
0& \text{otherwise,} \end{cases}$$ 
and the  
chemoattractant  is assumed to have the uniform concentration  
$v_0(x)=4.5$. In a first simulation, we consider the simple case of
$p=2$ and we compare the result with an analogous experiment with
$p=6$. We evolve the system until $t=1.0$, and show in
Figure~\ref{bbru_fig:ex1} a snapshot of the cell density at this
instant for both cases.
\subsection{Example~2}

\begin{figure}[t]
\begin{center}
\begin{tabular}{cc}
\includegraphics[width=0.35\textwidth]{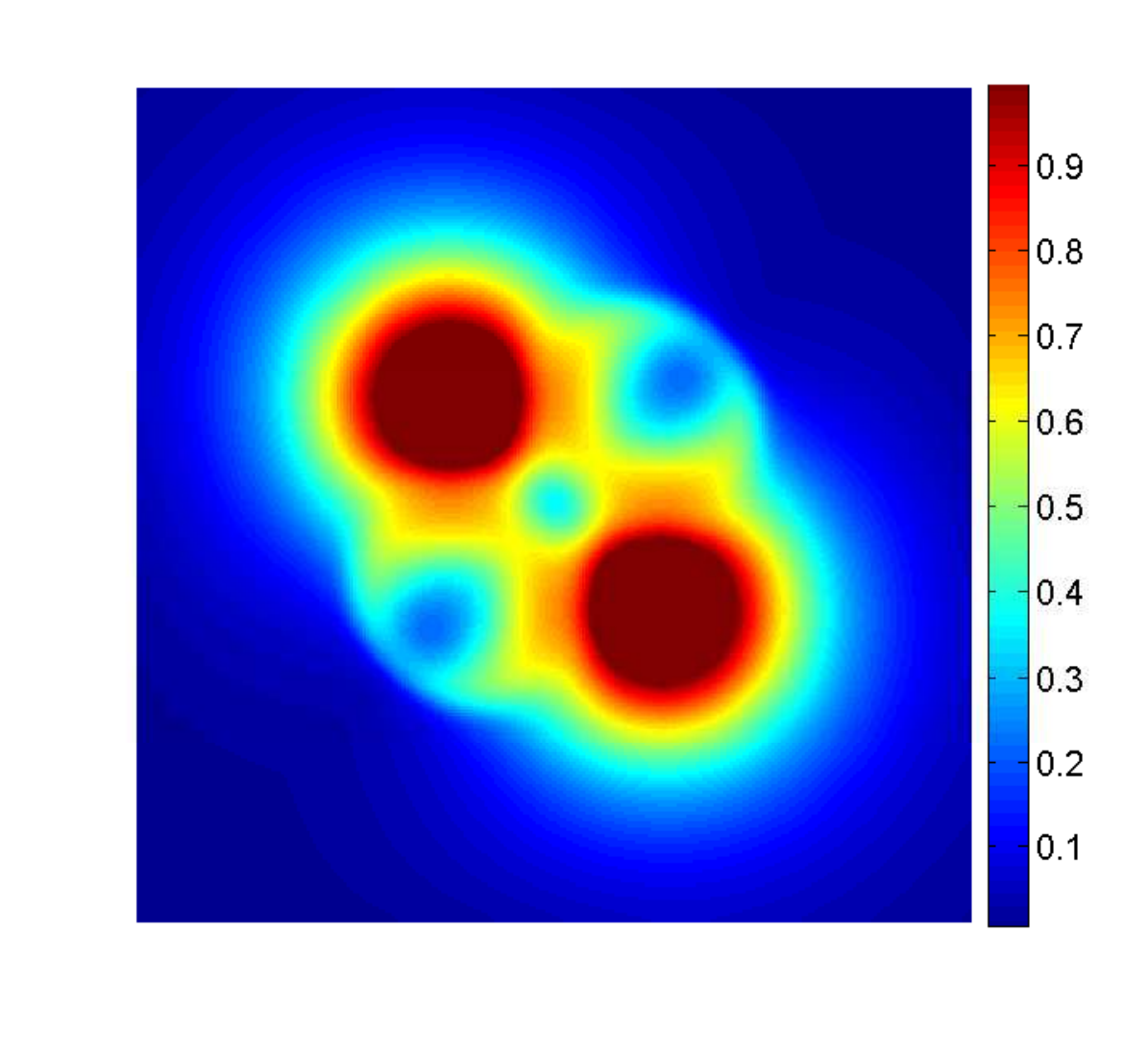}&
\includegraphics[width=0.35\textwidth]{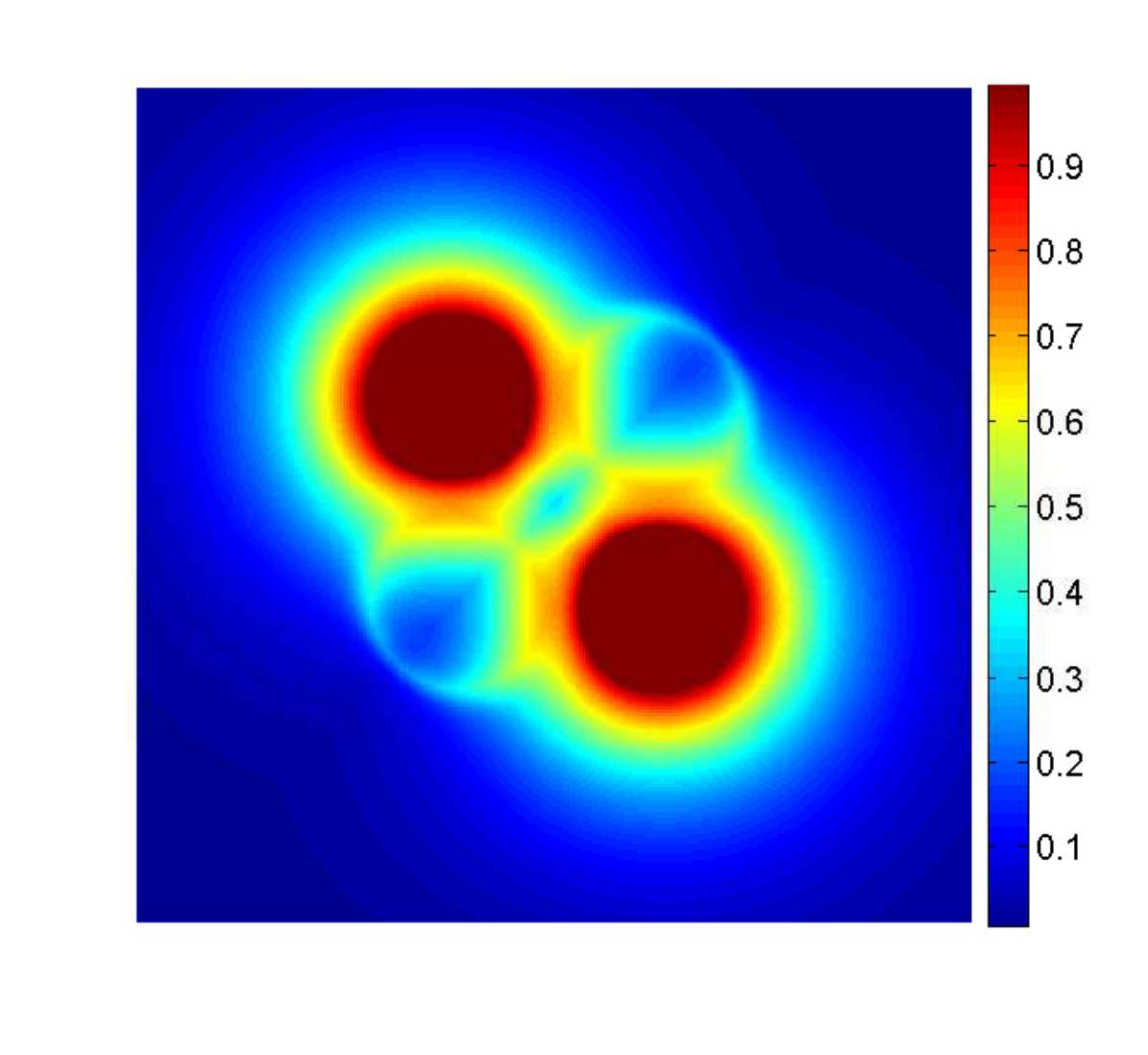}\\
\includegraphics[width=0.35\textwidth]{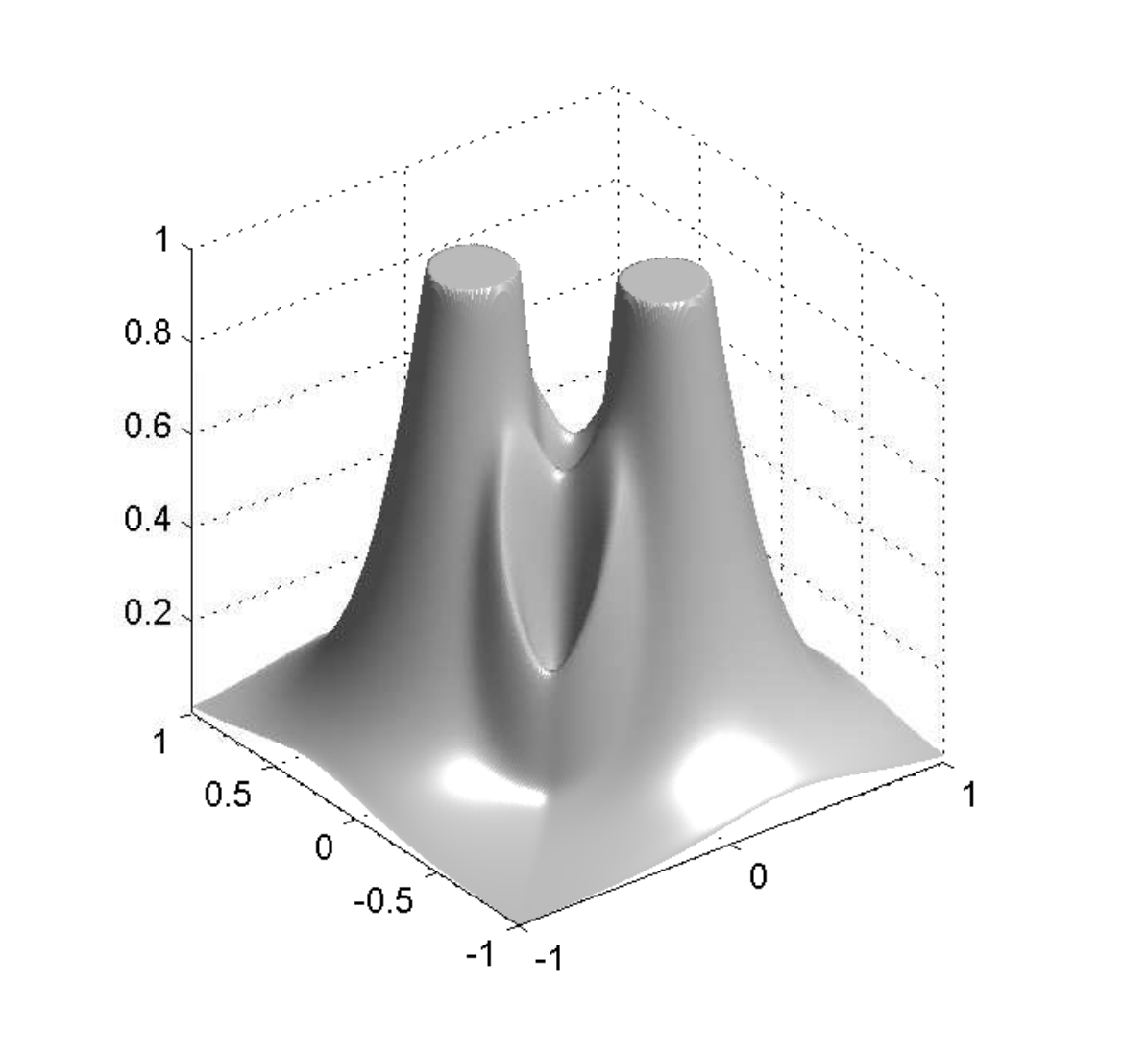}&
\includegraphics[width=0.35\textwidth]{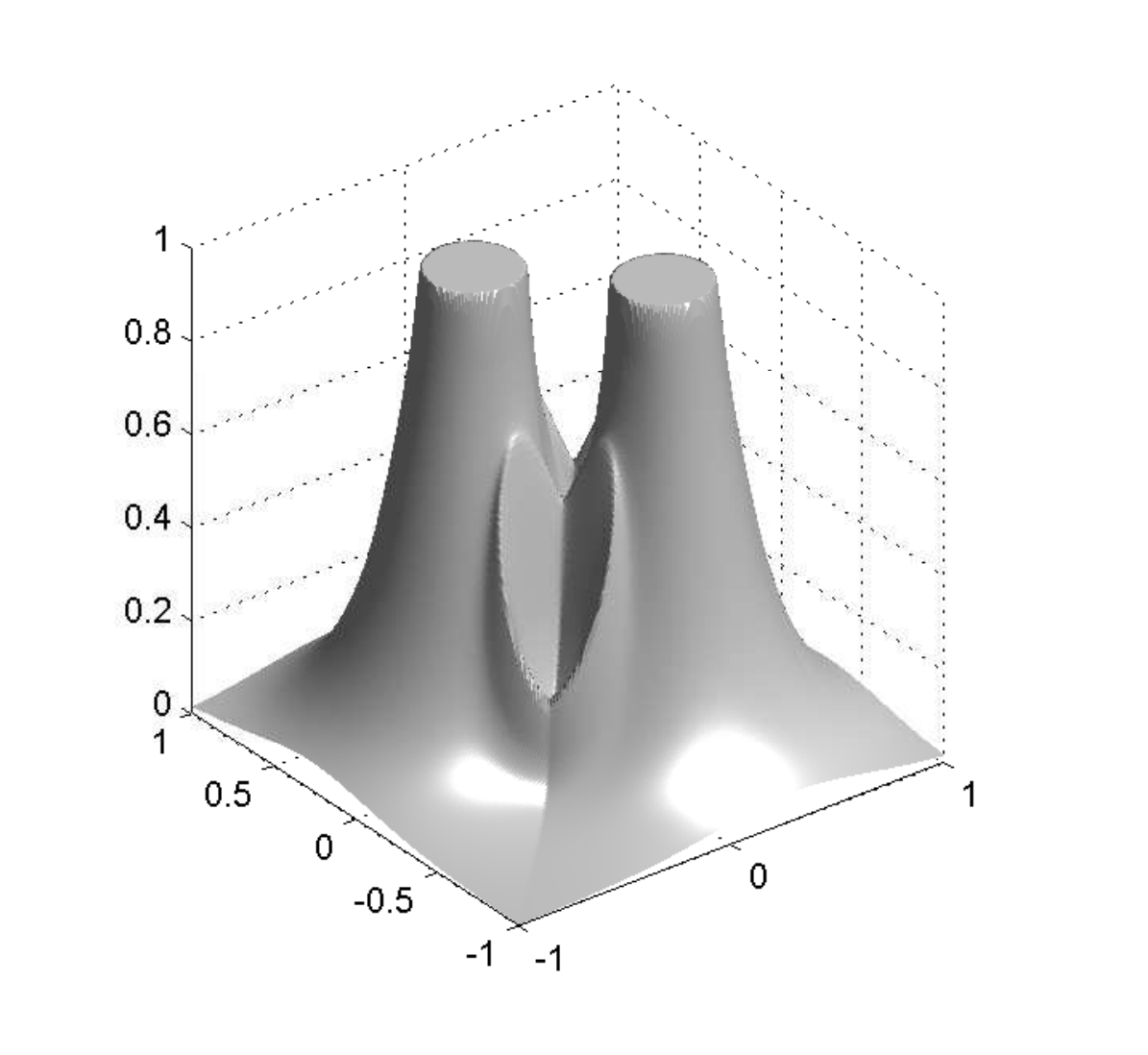}
\end{tabular}
\end{center}
\caption{Example~2: Numerical solution for species $u$, at $t=0.1$
 for $p=2$ (left), and $p=6$ (right).} \label{bbru_fig:ex2aa}
\end{figure}

\begin{figure}[t]
\begin{center}
\begin{tabular}{cc}
\includegraphics[width=0.35\textwidth]{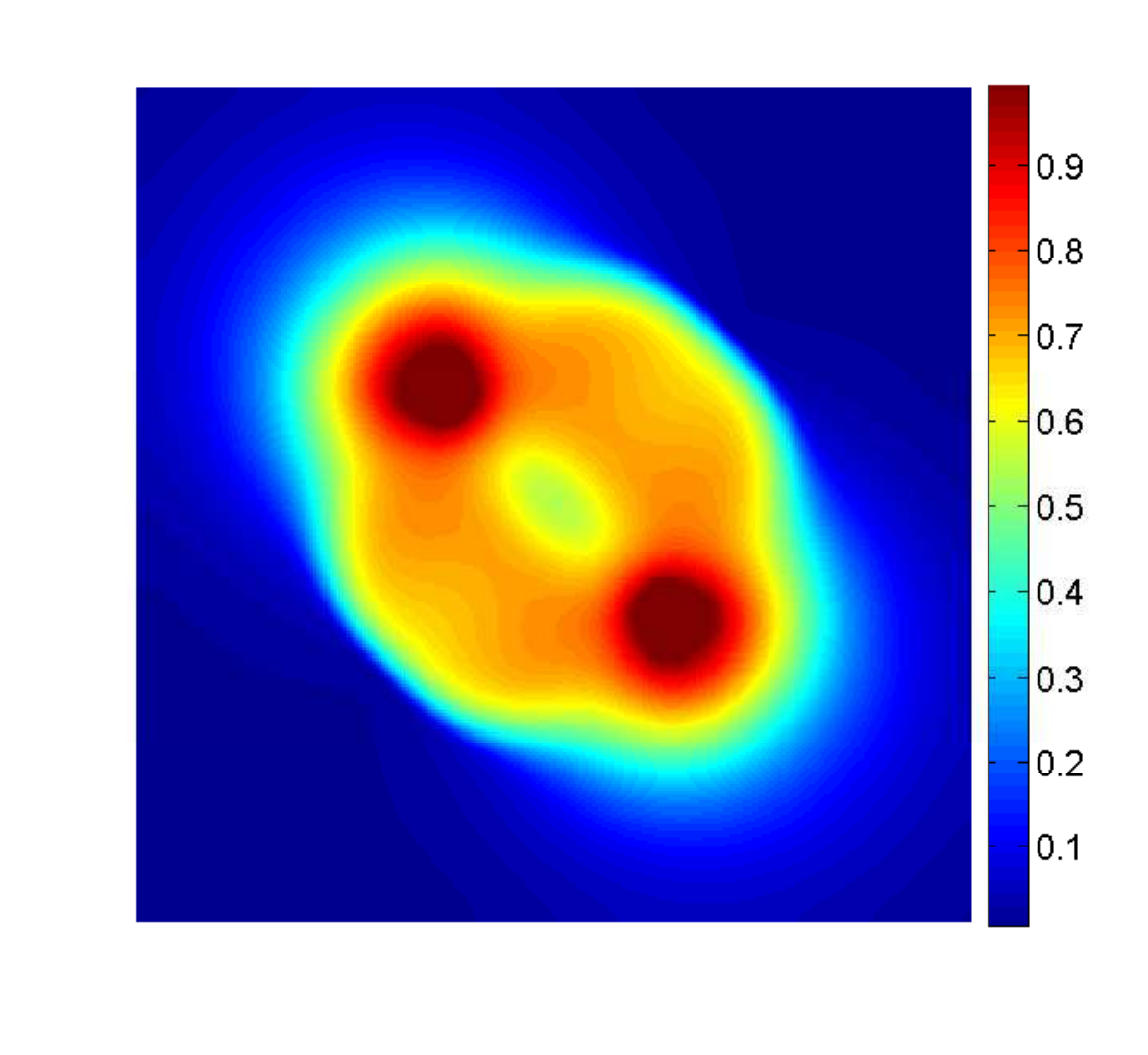}&
\includegraphics[width=0.35\textwidth]{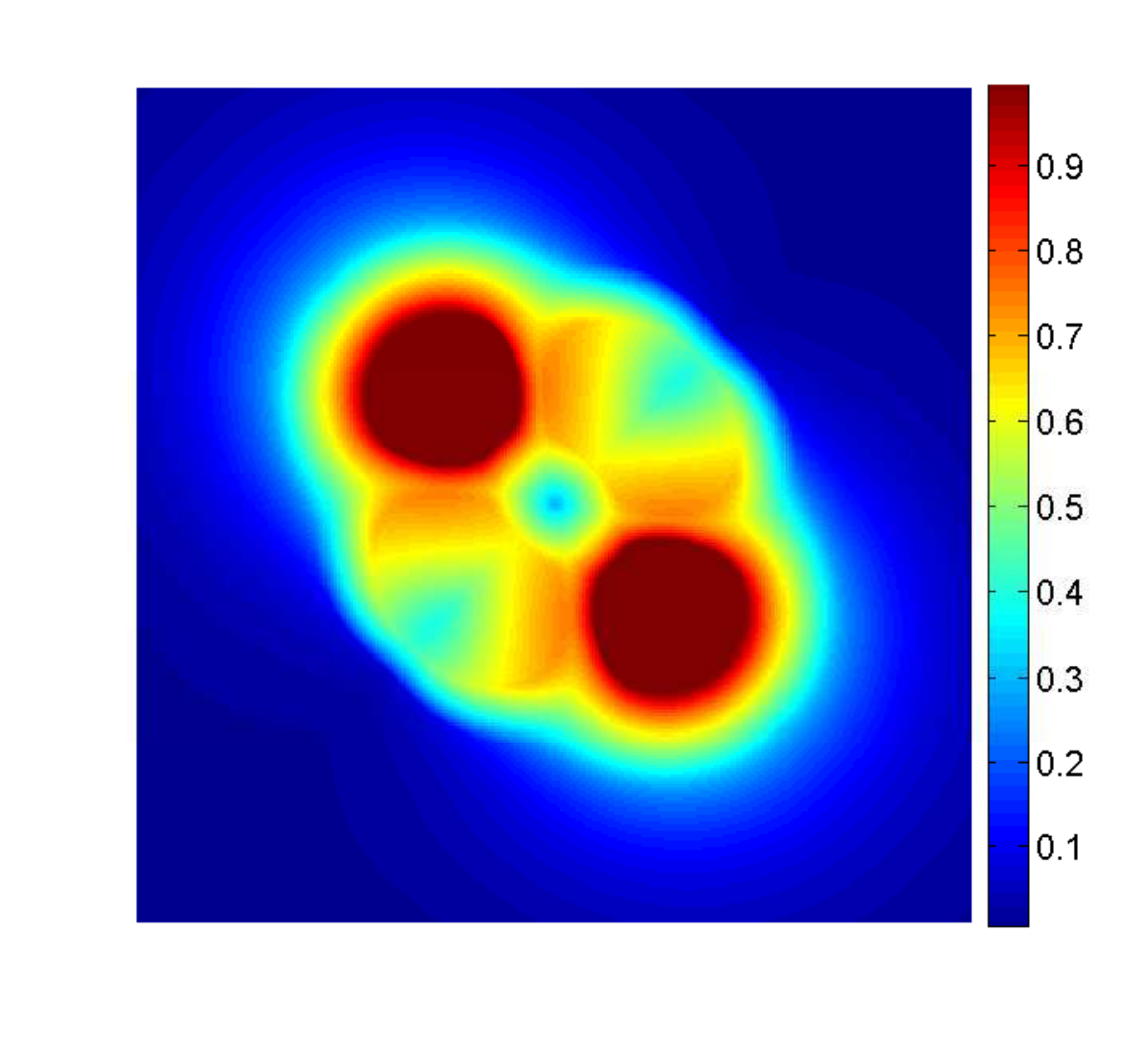}\\
\includegraphics[width=0.35\textwidth]{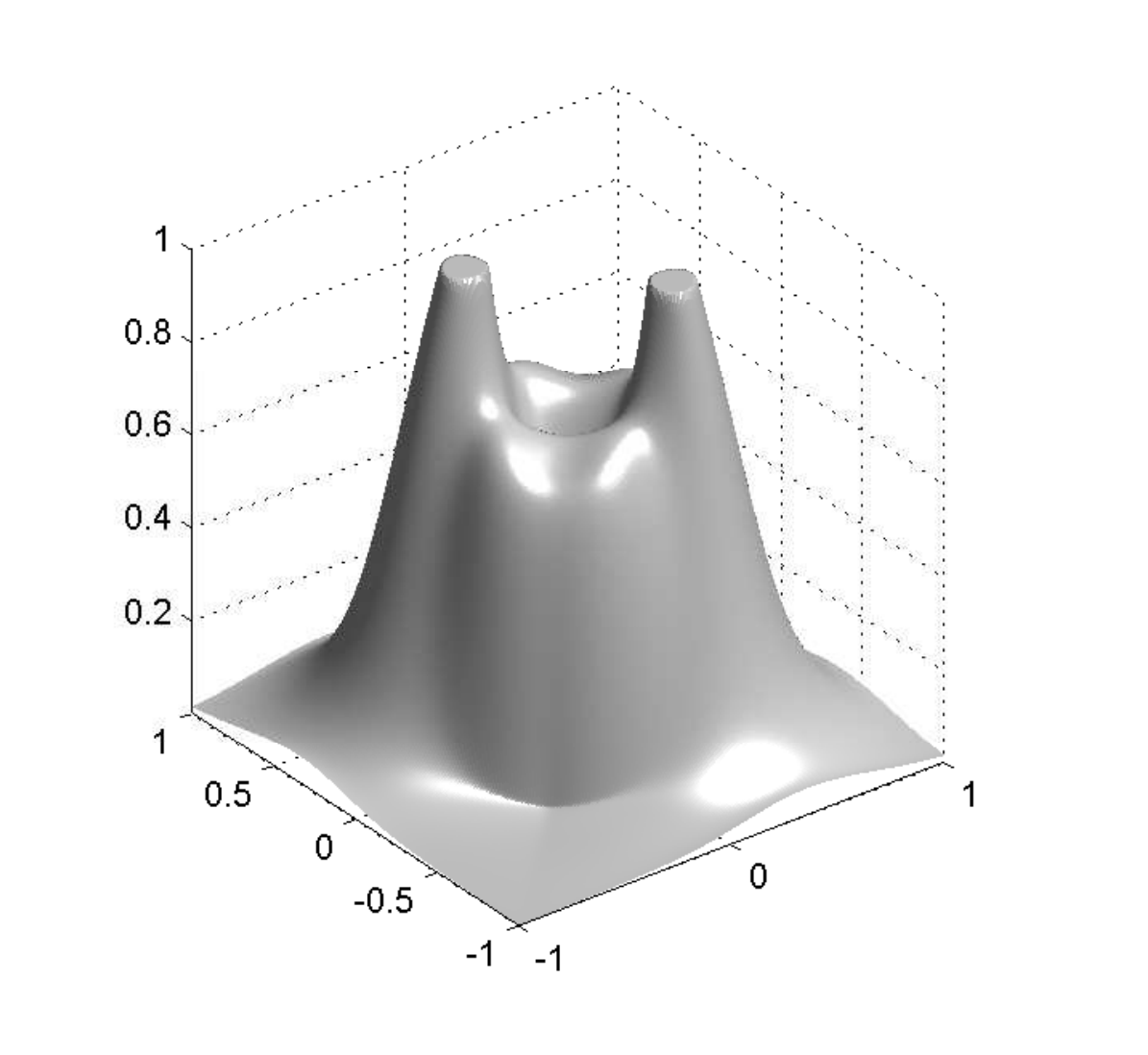}&
\includegraphics[width=0.35\textwidth]{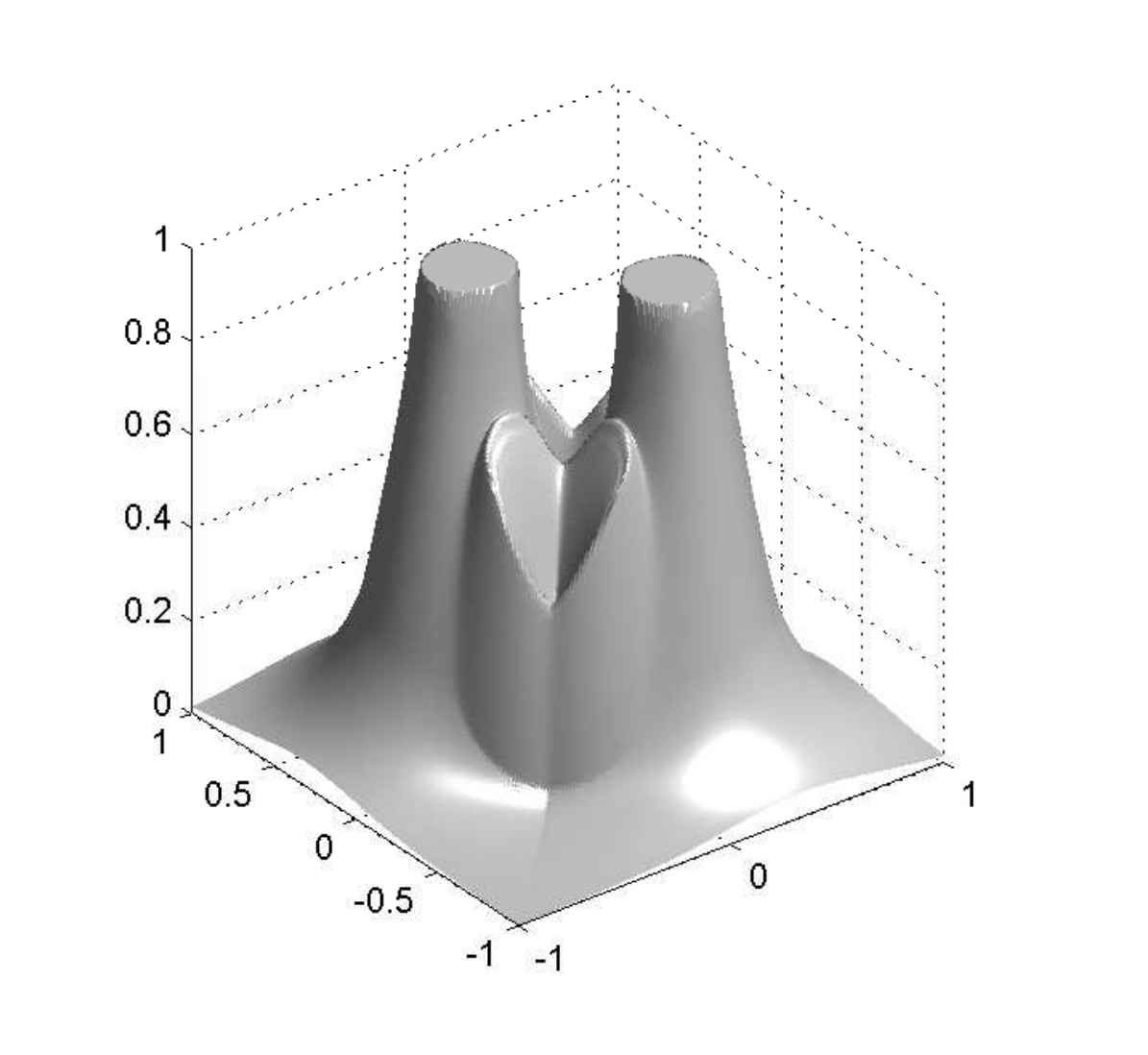}
\end{tabular}
\end{center}
\caption{Example~2: Numerical solution for species $u$, at $t=0.5$
 for $p=2$ (left), and $p=6$ (right).} \label{bbru_fig:ex2ab}
\end{figure}
We now choose
the parameters $\epsilon=0.5$, $\alpha=5$, $\beta=0.5$, $\chi=1$ and
$d=0.25$. The initial condition for the species density is given by
$$u_0(x)=\begin{cases}
1& \text{for $\|x-(-0.25,0.25)\|\leq 0.2$ or $\|x-(0.25,-0.25)\|\leq 0.2$}\\
0& \text{otherwise},\end{cases}
$$
and for the chemoattractant
$$v_0(x)=\begin{cases}
4.5& \text{for $\|x-(0.25,0.25)\|\leq 0.2$ or $\|x+(0.25,0.25)\|\leq 0.2$}\\
0& \text{otherwise.}\end{cases}
$$
The behavior of the system  for the cases
$p=2$ and $p=6$ at different times  is presented  
in  Figures~\ref{bbru_fig:ex2aa}, \ref{bbru_fig:ex2ab}
and~\ref{bbru_fig:ex2b}.

\begin{figure}[t]
\begin{center}
\begin{tabular}{cc}
\includegraphics[width=0.35\textwidth]{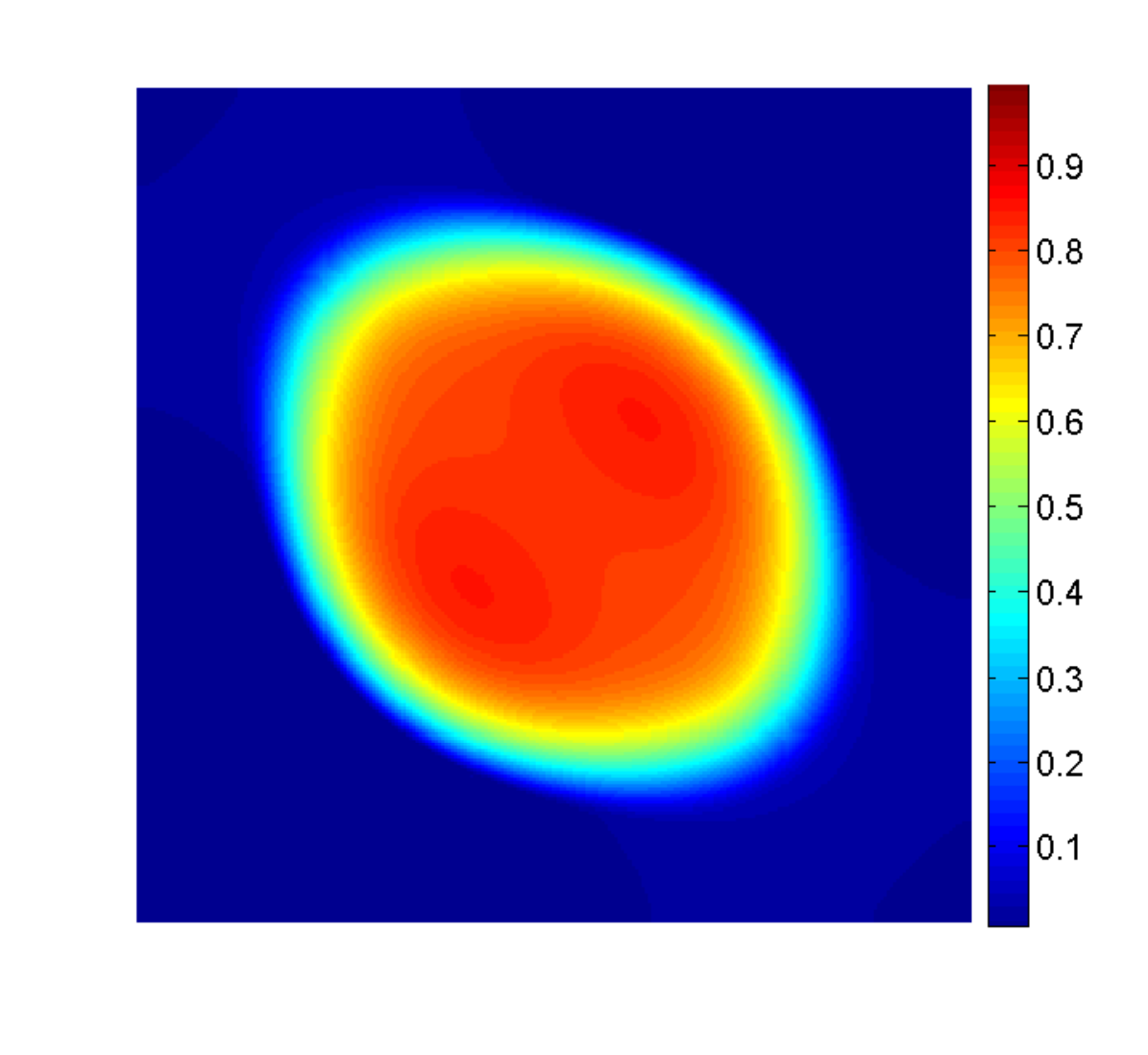}&
\includegraphics[width=0.35\textwidth]{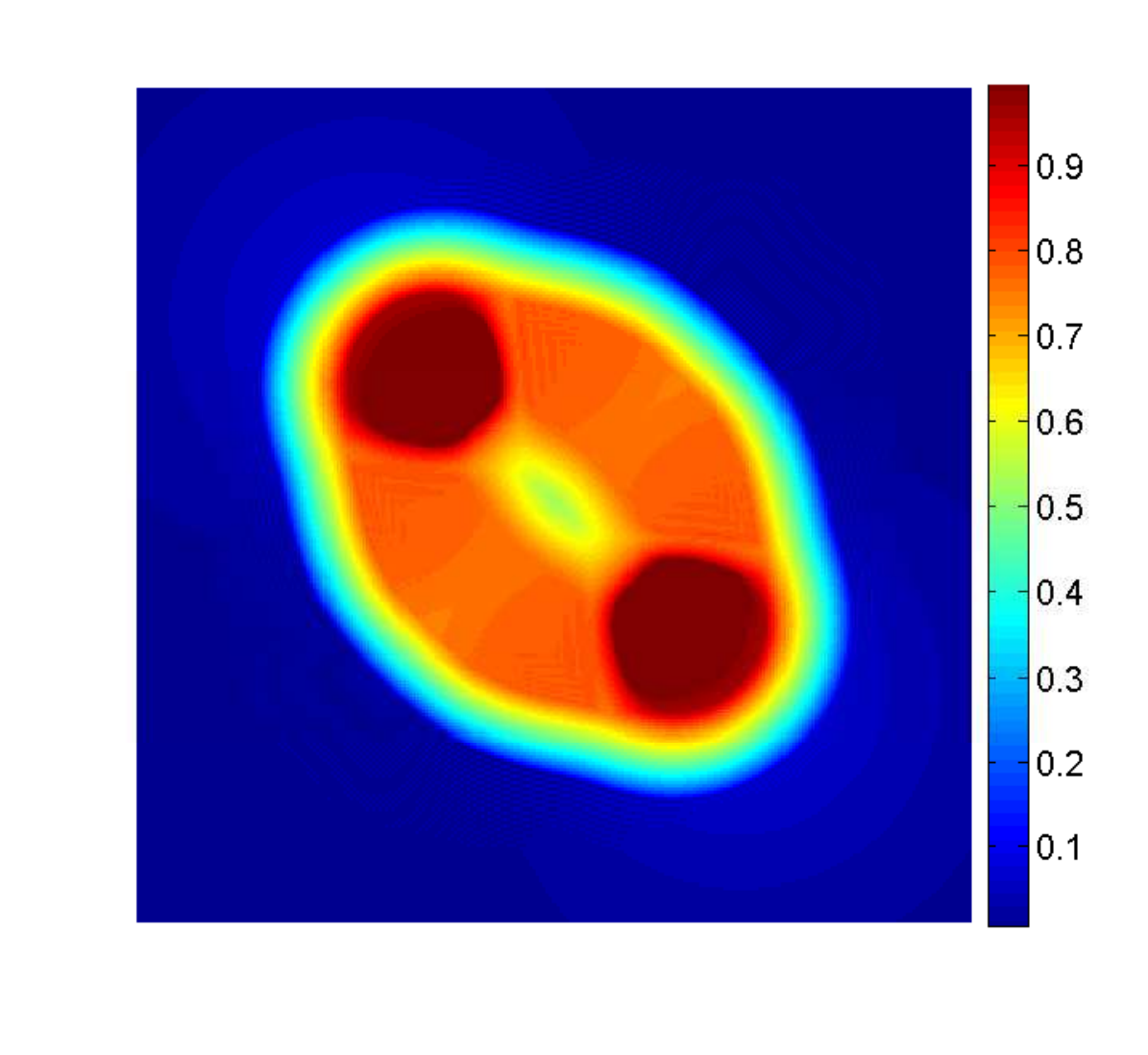}\\
\includegraphics[width=0.35\textwidth]{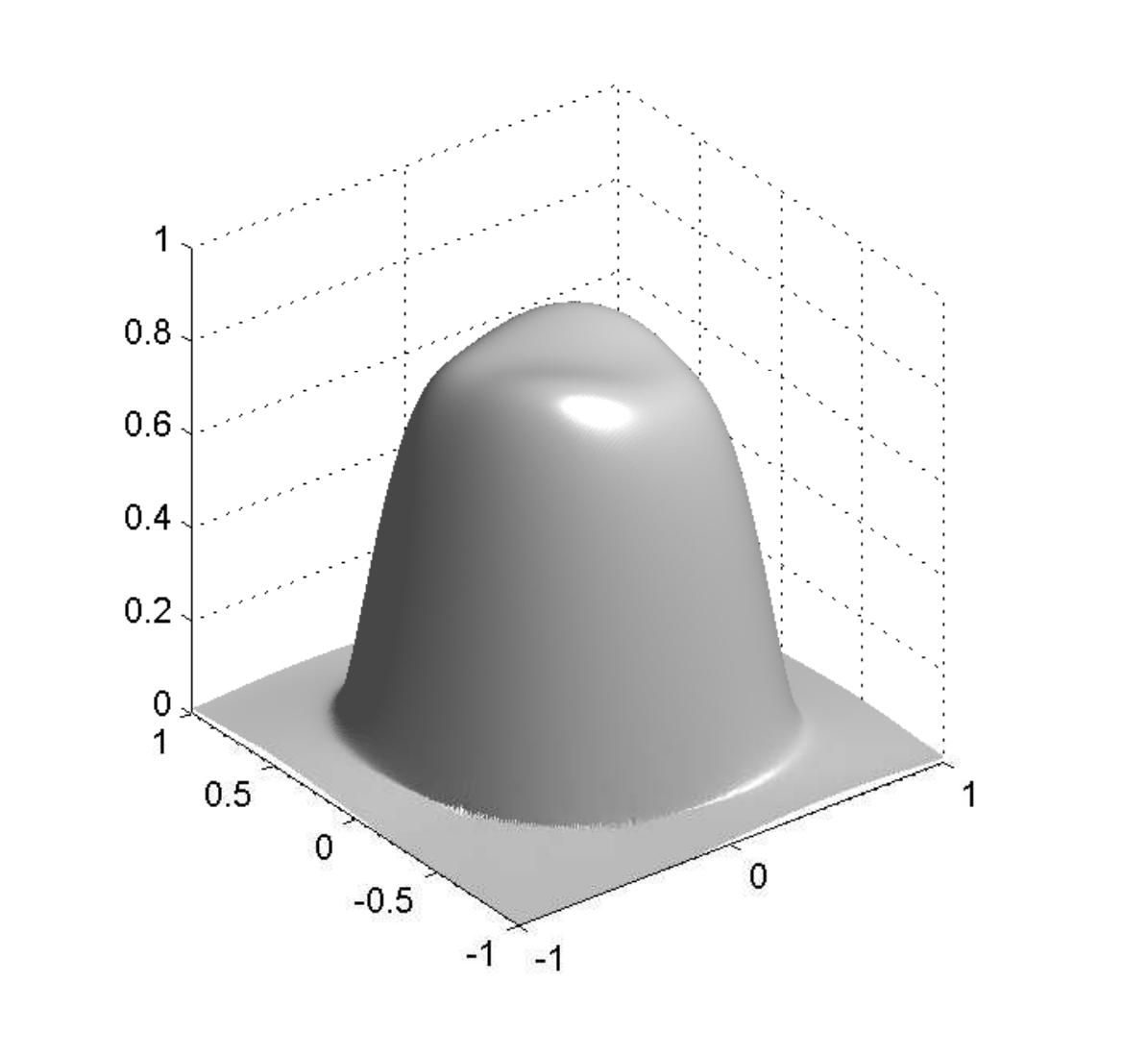}&
\includegraphics[width=0.35\textwidth]{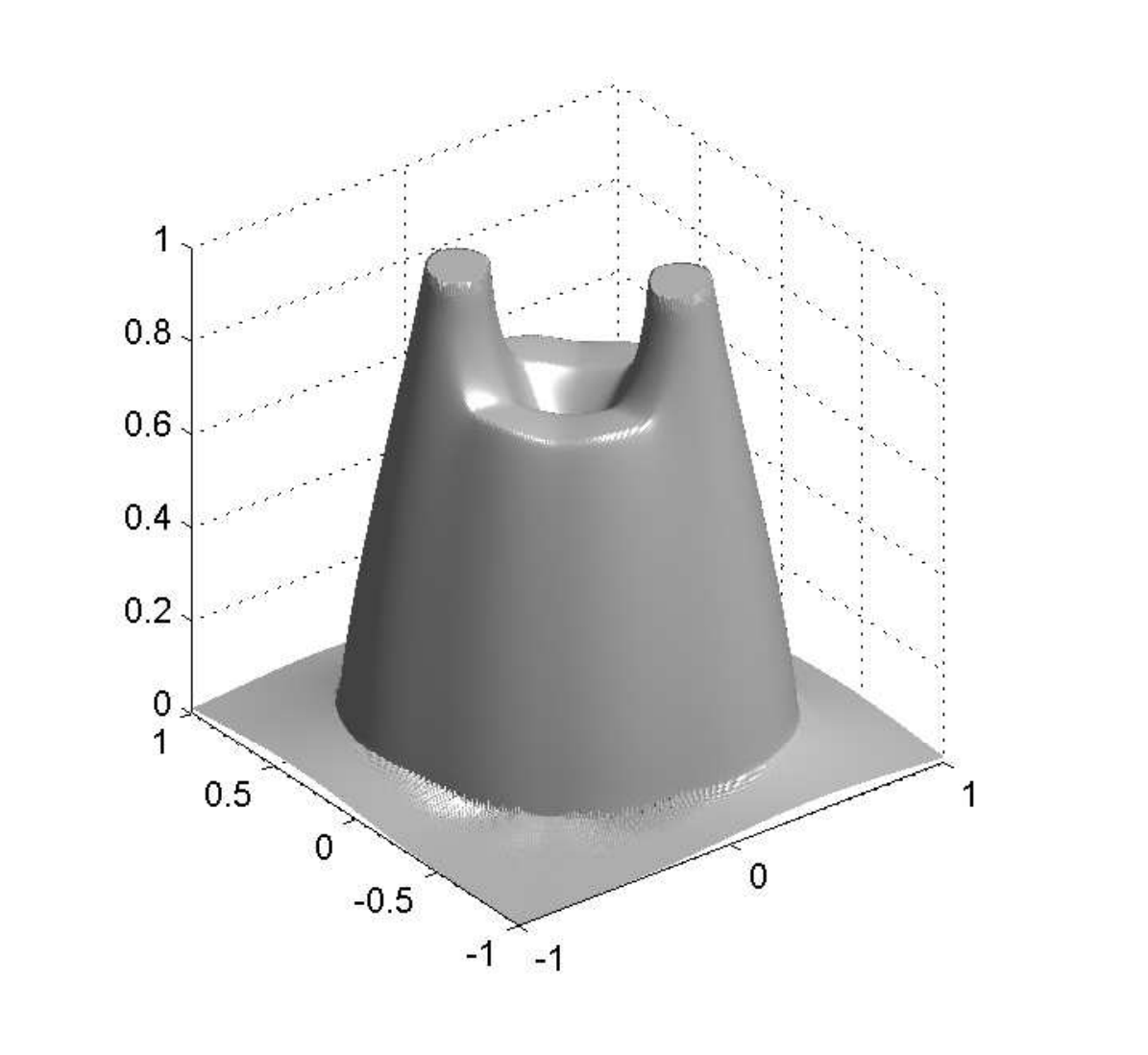}
\end{tabular}
\end{center}
\vspace{-.35cm}
\caption{Example~2: Numerical solution for species $u$, at $t=2.5$
 for $p=2$ (left), and $p=6$ (right).} \label{bbru_fig:ex2b}
\end{figure}

\subsection{Concluding remarks} 
W first mention that, from the previous
examples, one observes that even though the numerical solutions
obtained with $p=2$ differ from those obtained with $p>2$, the
qualitative structure of the solutions remains unchanged. We also
stress that the numerical examples illustrate the effectiveness of
the mechanism of prevention of overcrowding, or volume filling
effect, since all solutions assume values between zero and one only.
In particular, all examples exhibit plateau-like structures where
$u=u_{\mathrm{m}} =1$, at least for small times, which diffuse very
slowly, illustrating that the diffusion coefficient vanishes at
$u=1$ (recall the special form of the functions $a(u)$ and $f(u)$:
they include the factor $(1-u)$, and therefore the species diffusion
and chemotactical cross diffusion terms vanish at $u=0$ and $u=
u_{\mathrm{m}}=1$).

In Example~2, the solution for $p=2$ has a smoother shape than the
one for $p=6$, which exhibits sharp edges. These sharp edges do not
only appear for $u=0$ and $u=u_{\mathrm{m}}$, where one expects
them, due to the degeneracy of the diffusion term and the choice of
initial data, but also for intermediate solution values, as is
illustrated by the plots for $p=6$ of Figures~\ref{bbru_fig:ex2aa}
and \ref{bbru_fig:ex2ab}.

\section*{Acknowledgment}
M. Bendahmane is supported by Fondecyt Project 1070682,
and R. B\"{u}rger  is supported  by Fondecyt Project 1050728
 and Fondap in Applied Mathematics (project 15000001). 
 MB and RB also acknowledge support by CONICYT/INRIA project 
 Bendahmane-Perthame. 
R. Ruiz
acknowledges support by MECESUP project UCO0406 and CMUC. The research of J. Urbano was
supported by CMUC/FCT and Project POCI/MAT/57546/2004. This work was developed
 during a visit of R. Ruiz to the Center for Mathematics at the University
of Coimbra, Portugal.
\section*{Appendix}

The definition of the finite volume method is based on the framework of
\cite{Ey-Gal-Her:book}. An admissible mesh for $\Om$ is given
by a family $\TT$ of control volumes of maximum diameter $h$, a family
of edges $\EE$ and a family of points  $(x_{K})_{K\in \TT}$. For
$K \in \TT$, $x_K$ is the center of~$K$, $\EE_{\text{int}}(K)$ is
the set of edges $\sigma$ of $K$ in the interior of $\TT$,  and
$\EE_{\text{ext}}(K)$ the set of edges of $K$ on the boundary $\partial \Om$.
For all $\sigma \in \EE$, the transmissibility coefficient is
$$
\tau_\sigma=\begin{cases} \displaystyle
\frac{|\sigma|}{d(x_K,x_L)} & \text{for $\sigma \in
  \EE_{\text{int}}(K)$,
 $\sigma=K|L$,} \\[2mm]  \displaystyle
\frac{|\sigma|}{d(x_K,\sigma)}& \text{for $\sigma \in
\EE_{\text{ext}}(K)$,}
\end{cases}$$
where $K|L$ denotes the common edge of neighboring
  finite volumes~$K$ and~$L$.
For $K \in \TT$ and $\sigma=K|L \in \EE(K)$ with common vertexes
$(a_{\ell,K,L})_{1\le \ell\le I}$ with $I \in \N \backslash \{0\}$,
let $T_\sigma$ ($T^{\text{ext}}_{K,\sigma}$ for
$\sigma\in \EE_{\text{ext}}(K)$, respectively) be the open and convex polygon
built by the convex envelope with vertices $(x_K,x_L)$ ($x_K$,  respectively) and
$(a_{\ell,K,L})_{1\le \ell\le I}$. The domain $\Om$ can be decomposed into
\begin{align*}
\overline{\Om}=\cup_{K\in \TT}
\bigl((\cup_{L\in N(K)}\overline{T}_{K,L})\cup
 (\cup_{\sigma \in
\EE_{\text{ext}}(K)}\overline{T}^{\text{ext}}_{K,\sigma})\bigr).
\end{align*}
For all $K \in \TT$, the approximation $\Grad_h u_{K,\sigma}$ of $\Grad u$
is defined by
$$\Grad_h u^n_{K,\sigma}:=\begin{cases}
u^n_{L}-u^n_{K} & \text{if $\sigma=K|L\in\EE_{\text{int}}(K)$}, \\
    0   & \text{if $\sigma \in\EE_{\text{ext}}(K)$}.
\end{cases}$$
To discretize \eqref{bbru_S1-S2-S3}, we choose an admissible
 mesh of $\Om$ and  a time step size
$\Delta t>0$. If $M_T>0$ is the smallest integer such that $M_T\Delta
t\ge T$, then $t^n:=n \Delta t$ for $n\in  \{0,\ldots,M_T\}$.

We define cell averages of the unknowns $A(u)$, $f(u)$ and $g(u,v)$ over $K\in \TT$ :
\begin{align*}
&A_{K}^{n+1}:=\frac{1}{\Delta t |K|}\int_{t^n}^{t^{n+1}}
\int_KA \bigl(u(x,t)\bigr)\dx\dt, \quad g_{K}^{n+1}:=\frac{1}{\Delta t |K|}\int_{t^n}^{t^{n+1}}
\int_Kg \bigl(u(x,t),v(x,t) \bigr)\dx\dt, \\
&\qquad \qquad \qquad \qquad f_K^{n+1}:=\frac{1}{\Delta t |K|}\int_{t^n}^{t^{n+1}}
\int_Kf\bigl(u(x,t)\bigr)\dx\dt,
\end{align*}
and the initial conditions are discretized by
\begin{align*}
u_K^0=\frac{1}{|K|} \int_{K} u_0(x) \dx,\quad
v_K^0=\frac{1}{|K|} \int_{K} v_0(x) \dx.
\end{align*}
We now give the finite  volume scheme employed to advance the numerical
solution from $t^n$ to $t^{n+1}$, which is based on a simple explicit
Euler time discretization. Assuming that  at $t=t^n$, the pairs
$(u_{K}^{n},v_{K}^{n})$ are known for all $K \in \TT$,  we compute
$(u_{K}^{n+1},v_{K}^{n+1})$  from
\begin{align*}
|K|\frac{u^{n+1}_K-u^{n}_K}{\Delta t}&=\sum_{\sigma\in \EE(K)}\tau_\sigma
\left|\Grad_h A^{n}_{K,\sigma}\right|_h^{p-2}\Grad_h A^{n}_{K,\sigma}
+\chi \sum_{\sigma\in \EE(K)}\tau_\sigma\left[\left(\Grad_h v^{n}_{K,\sigma}\right)^+
u^{n}_{K}f^n_K-\left(\Grad_h v^{n}_{K,\sigma}\right)^-u^{n}_{L}f^n_L\right],\\
|K|\frac{v^{n+1}_K-v^{n}_K}{\Delta t}&=\sum_{\sigma\in \EE(K)}\tau_\sigma
\Grad_h v^{n}_{K,\sigma}+|K|g^n_K.
\end{align*}
Here $|\cdot|_h$ denotes the discrete Euclidean norm. The Neumann boundary conditions
are taken into account by imposing zero fluxes on the external edges.


\end{document}